\theoremstyle{plain}
\newtheorem{theorem}{Theorem}[section]
\newtheorem{conjecture}[theorem]{Conjecture}
\newtheorem{proposition}[theorem]{Proposition}
\newtheorem{lemma}[theorem]{Lemma}
\newtheorem{corollary}[theorem]{Corollary}
\newtheorem{question}[theorem]{Question}
\theoremstyle{definition}
\newtheorem{definition}[theorem]{Definition}
\newtheorem*{definition*}{Definition}
\newtheorem{remark}[theorem]{Remark}
\newtheorem{remarks}[theorem]{Remarks}
\newtheorem{example}[theorem]{Example}
\numberwithin{equation}{section}
\numberwithin{figure}{section}
\newcommand{\gen}[1]{K_{(#1)}}
\newcommand{\genx}[1]{K_{#1}}
\newcommand{\om}{\omega} 
\newcommand{\kpo}{\gen{P,\om}}
\newcommand{\kqt}{\gen{Q,\tau}}
\newcommand{\pop}{f}
\newcommand{\pcovered}{\preccurlyeq_P}
\newcommand{\qsym}{g}
\newcommand{\des}{\mathrm{Des}}
\newcommand{\rotate}[1]{(#1)^*}
\newcommand{\switch}[1]{\overline{(#1)}}
\newcommand{\switchx}[1]{\overline{#1}}
\newcommand{\co}{\mathrm{co}}
\newcommand{\jump}{\mathrm{jump}}
\newcommand{\anti}{\mathrm{anti}}
\newcommand{\nenwarrows}{\nearrow\!\!\!\!\!\!\nwarrow}
\newcommand{\NeNwarrows}{\Nearrow\!\!\!\!\!\!\Nwarrow}
\newcommand{\neNwarrows}{\nearrow\!\!\!\!\!\!\Nwarrow}
\newcommand{\du}[4]{(#1,#2)+(#3,#4)}
\newcommand{\up}[4]{(#1,#2)\uparrow(#3,#4)}
\newcommand{\Up}[4]{(#1,#2)\Uparrow(#3,#4)}
\newcommand{\neast}[4]{(#1,#2)\nearrow(#3,#4)}
\newcommand{\Ne}[4]{(#1,#2)\Nearrow(#3,#4)}
\newcommand{\nenw}[4]{(#1,#2)\nenwarrows(#3,#4)}
\newcommand{\NeNw}[4]{(#1,#2)\NeNwarrows(#3,#4)}
\newcommand{\neNw}[4]{(#1,#2)\neNwarrows(#3,#4)}
\newcommand{\switchnenw}[4]{\switch{#1,#2}\nenwarrows\switch{#3,#4}}
\newcommand{\stwoone}{$s_{21}$}
\newcommand{\stwooneone}{$s_{211}$}
\newcommand{\single}{\circ}
\newcommand{\sd}[1]{P_{#1}}
\newcommand{\twoplusone}{\mathbf{2}+\mathbf{1}}
\newcommand{\oneplusone}{\mathbf{1}+\mathbf{1}}
\DeclareSymbolFont{symbolsC}{U}{txsyc}{m}{n}
\DeclareMathSymbol{\Nearrow}{\mathrel}{symbolsC}{116}
\DeclareMathSymbol{\Searrow}{\mathrel}{symbolsC}{117}
\DeclareMathSymbol{\Nwarrow}{\mathrel}{symbolsC}{118}
\DeclareMathSymbol{\Swarrow}{\mathrel}{symbolsC}{119}
\begin{document}
\title{Equality of $P$-partition generating functions}

\author{Peter R.\,W. McNamara}

\address{Department of Mathematics, Bucknell University, Lewisburg, PA 17837, USA}
\email{\href{mailto:peter.mcnamara@bucknell.edu}{peter.mcnamara@bucknell.edu}}

\author{Ryan E. Ward}
\address{Department of Mathematics, The Pennsylvania State University, University Park, PA 16802, USA}
\email{\href{mailto:ward@math.psu.edu}{ward@math.psu.edu}}
\subjclass{Primary 06A11; Secondary 05E05, 06A07} 
\keywords{$P$-partition, labeled poset, quasisymmetric function, generating function, skew Schur function, linear extension} 

\begin{abstract}
To every labeled poset $(P,\om)$, one can associate a quasisymmetric generating function for its $(P,\om)$-partitions.  We ask: when do two labeled posets have the same generating function?  Since the special case corresponding to skew Schur function equality is still open, a complete classification of equality among $(P,\om)$ generating functions is likely too much to expect.   Instead, we determine necessary conditions and separate sufficient conditions for two labeled posets to have equal generating functions.  We conclude with a classification of all equalities for labeled posets with small numbers of linear extensions.  
\end{abstract}

\maketitle

%%%%%%%%%%%%%%%%%%%%%%%%%

\section{Introduction}\label{sec:intro} 
 
Because of their well-documented and well-known connections with other areas of mathematics, Schur functions and Littlewood--Richardson coefficients are central objects of study in algebraic combinatorics.  A natural generalization of the class of Schur functions is the class of skew Schur functions, and these are also intimately connected to Littlewood--Richardson coefficients: 
specifically, Littlewood-Richardson coefficients can be defined as the coefficients that result from expanding a skew Schur function in the basis of Schur functions.
Since skew Schur functions are too abundant to comprise a basis for the algebra of symmetric functions, it is natural to study relationships among them.  Skew Schur functions are indexed by skew diagrams, and so an obvious first step is to ask when two skew diagrams correspond to equal skew Schur functions.  This question has received considerable attention  in recent years \cite{BTvW06,Gut09,McvW09,RSvW07}, but remains wide open.  In fact, building on \cite{BTvW06,RSvW07}, a conjecture is given in \cite{McvW09} for necessary and sufficient conditions for two skew diagrams to give the same skew Schur function, yet both directions of the conjecture are open.  

Sometimes a way to gain insight into a problem is to consider a more general one, which was one of the motivating ideas behind the present work.  A broad but natural generalization of a skew diagram $\lambda/\mu$ is a labeled poset $(P,\om)$.  Under this extension, semistandard Young tableaux of shape $\lambda/\mu$ are generalized to $(P,\om)$-partitions, as first defined in \cite{StaThesis71,StaThesis}.  Moreover, the skew Schur function $s_{\lambda/\mu}$ is generalized to the generating function $\kpo$ for $(P,\om)$-partitions.  As we will see, $\kpo$ is a quasisymmetric function;  see \cite{Ges84} and \cite[\S 7.19]{ec2} for further information about $\kpo$.  
(We note here that $\kpo$ is different from another generating function for $(P,\om)$-partitions, namely $F_{P,\om}$ of \cite[\S 3.15]{ec1e2} or its special case $F_P$ of \cite[\S 4.5]{ec1}.  In particular, the poset $P$ can be recovered from $F_P$ or $F_{P,\om}$, which is certainly not the case with $\kpo$ as we shall see in detail.)  

Our goal, therefore, is to determine conditions on labeled posets $(P,\om)$ and $(Q,\tau)$ for $\kpo = \kqt$.  If the latter equality is true, we will write $(P,\om) \sim (Q,\tau)$. A nontrivial example of such labeled posets is given in Figure~\ref{fig:nontrivial}.

\begin{figure}[htbp]
\begin{center}
\begin{tikzpicture}[scale=1.0]
\begin{scope}
\tikzstyle{every node}=[draw,shape=circle, inner sep=2pt]; 
\draw (0,0) node (a1) {1}; 
\draw (1,0) node (a2) {3}; 
\draw (0,1) node (a3) {4};
\draw (1,1) node (a4) {2};
\draw (1,2) node (a5) {5};
\draw (a1) -- (a3)
(a1) -- (a4)
(a2) -- (a3)
(a2) -- (a4)
(a4) -- (a5); 
\draw (3.5,0) node (b1) {2}; 
\draw (3,1) node (b2) {1}; 
\draw (4,1) node (b3) {3};
\draw (3,2) node (b4) {4};
\draw (4,2) node (b5) {5};
\draw (b1) -- (b2)
(b1) -- (b3)
(b2) -- (b4)
(b3) -- (b5);
\end{scope}
\draw (2,1) node {$\sim$};
\end{tikzpicture}
\caption{Two labeled posets that have the same $(P,\om)$-partition generating function $\kpo$.}
\label{fig:nontrivial}
\end{center}
\end{figure}
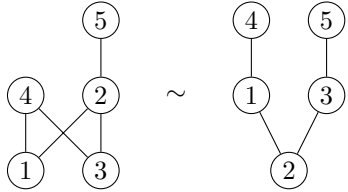

Not surprisingly, since the special case involving skew Schur functions remains open, we are unable to determine conditions on labeled posets that are both necessary and sufficient for $\kpo$-equality.  In fact, there are many interesting examples of $\kpo$-equality that have no analogue in the skew Schur function case, with Figure~\ref{fig:nontrivial} being one such example.  Instead, we address $\kpo$-equality by giving some conditions that are necessary and some that are sufficient.  Because this is the first paper on $\kpo$-equality, we benefit from the opportunity to record some results that can be considered as ``low-hanging fruit.''  We also present some more substantial results, but have not aimed to be exhaustive, and we hope that others will be attracted to the area by the wealth of unanswered questions.

The rest of this paper is organized as follows.  In Section~\ref{sec:prelim}, we give the necessary background, including the definition of $\kpo$.  In Section \ref{sec:initial}, we make some initial observations pertaining to the equality question, some of which follow directly from the definition of $\kpo$ and others of which come from ideas already appearing in the literature.  In Section \ref{sec:morenec}, we give necessary conditions for $\kpo$-equality based around the idea of the \emph{jump sequence} of a labeled poset.  Section \ref{sec:suff} gives sufficient conditions for equality in the form of several operations that can be performed on labeled posets that preserve $\kpo$-equality.  In Section \ref{sec:fewlinext}, we give conditions that are both necessary and sufficient for equality in the case of posets with at most three linear extensions; we show that all such equalities arise from simple equalities among skew Schur functions.  We conclude in Section~\ref{sec:open} with open problems.  

\subsection*{Acknowledgements} We thank Christophe Reutenauer for posing the question of $\kpo$-equality to the first author, and the anonymous referees for their careful reading of the manuscript.  Portions of this paper were written while the first author was on sabbatical at Trinity College Dublin; he thanks the School of Mathematics for its hospitality.  Computations were performed using Sage \cite{sage}.

%%%%%%%%%%%%%%%%%%%%%%%%%

\section{Preliminaries}\label{sec:prelim}

In this section, we give the necessary background information about labeled posets, $(P,\om)$-partitions, $\kpo$, and quasisymmetric functions; more details can be found in \cite{Ges84} and \cite[\S 7.19]{ec2}.

\subsection{Labeled posets and $(P,\om)$-partitions}
All our posets $P$ will be finite with cardinality denoted $|P|$.  Let $[n]$ denote the set $\{1, \ldots, n\}$.  The order relation on $P$ will be denoted $\leq_P$, while $\leq$ will denote the usual order on the positive integers.  A \emph{labeling} of a poset is a bijection $\om : P \to [|P|]$.  A \emph{labeled poset} $(P,\om)$ is then a poset $P$ with an associated labeling $\om$.  

\begin{definition}\label{def:popartition}
For a labeled poset $(P,\om)$, a $(P,\om)$-partition is a map $\pop$ from $P$ to the positive integers satisfying the following two conditions:
\begin{itemize}
\item if $a \leq_P b$, then $\pop(a) \leq \pop(b)$, i.e., $\pop$ is order-preserving;
\item if $a \leq_P b$ and $\om(a) > \om(b)$, then $\pop(a) < \pop(b)$.
\end{itemize}
\end{definition}
In other words, a $(P,\om)$-partition is an order-preserving map from $P$ to the positive integers with certain strictness conditions determined by $\om$.  
Examples of $(P,\om)$-partitions are given in Figure~\ref{fig:popartitions}, where the images under $f$ are written in bold next to the nodes.  The meaning of the double edges follows from the following observation about Definition~\ref{def:popartition}.  For $a, b \in P$, we say that $a$ is \emph{covered} by $b$ in $P$, denoted $a \pcovered b$, if $a \leq_P b$ and there does not exist $c$ in $P$ such that $a <_P c <_P b$.  Note that a definition equivalent to Definition~\ref{def:popartition} is obtained by replacing both appearances of the relation $a \leq_P b$ with the relation $a \pcovered b$.  In other words, we require that $\pop$ be order-preserving along the edges of the Hasse diagram of $P$, with $\pop(a) < \pop(b)$ when $a \pcovered b$ with $\om(a) > \om(b)$.  With this in mind, we will consider those edges $a \pcovered b$ with $\om(a) > \om(b)$ as \emph{strict edges} and we will represent them in Hasse diagrams by double lines.  Similarly, edges $a \pcovered b$ with $\om(a) < \om(b)$ will be called \emph{weak edges} and will be represented by single lines. 
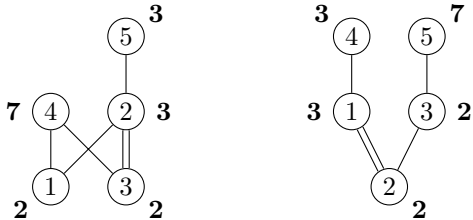
\begin{figure}[htbp]
\begin{center}
\begin{tikzpicture}[scale=1.0]
\begin{scope}
\tikzstyle{every node}=[shape=circle, inner sep=2pt]; 
\draw (0,0) node[draw] (a1) {1} +(-0.4,-0.3) node {\textbf{2}}; 
\draw (1,0) node[draw] (a2) {3} +(0.4,-0.3) node {\textbf{2}};
\draw (0,1) node[draw] (a3) {4} +(-0.5,0) node {\textbf{7}};
\draw (1,1) node[draw] (a4) {2} +(0.5,0) node {\textbf{3}};
\draw (1,2) node[draw] (a5) {5} +(0.4,0.3) node {\textbf{3}};
\draw[double distance=2pt] (a2) -- (a4);
\draw (a1) -- (a3)
(a1) -- (a4)
(a2) -- (a3)
(a4) -- (a5); 
\begin{scope}[xshift=1cm]
\draw (3.5,0) node[draw] (b1) {2} +(0.4,-0.3) node {\textbf{2}}; 
\draw (3,1) node[draw] (b2) {1} +(-0.5,0) node {\textbf{3}}; 
\draw (4,1) node[draw] (b3) {3} +(0.5,0) node {\textbf{2}};
\draw (3,2) node[draw] (b4) {4} +(-0.4,0.3) node {\textbf{3}};
\draw (4,2) node[draw] (b5) {5} +(0.4,0.3) node {\textbf{7}};
\draw[double distance=2pt] (b1) -- (b2);
\draw
(b1) -- (b3)
(b2) -- (b4)
(b3) -- (b5);
\end{scope}
\end{scope}
\end{tikzpicture}
\caption{Two examples of $(P,\om)$-partitions.}
\label{fig:popartitions}
\end{center}
\end{figure}

From the point-of-view of $(P,\om)$-partitions, the labeling $\om$ only determines which edges are strict and which are weak.  Therefore, many of our figures from this point on will not show the labeling $\om$, but instead show some collection of strict and weak edges determined by an underlying $\om$.  Furthermore, it will make many of our explanations simpler if we think of $\om$ as an assignment of strict and weak edges, rather than as a labeling of the elements of $P$, especially in the later sections. For example, we say that labeled posets $(P,\om)$ and $(Q,\tau)$ are \emph{isomorphic}, written $(P,\om) \cong (Q,\tau)$ if there exists a poset isomorphism from $P$ to $Q$ that sends strict (respectively weak) edges to strict (resp.\ weak) edges.  We will be careful to refer to the underlying labels when necessary.  

If all the edges are weak, then $P$ is said to be \emph{naturally labeled}.  In this case, a $(P,\om)$-partition is traditionally called a $P$-partition (although sometimes the term ``$P$-partition'' is used informally as an abbreviation for ``$(P,\om)$-partition,'' as in the title of this paper).  Note that if $P$ is a naturally-labeled chain, then a $P$-partition gives a partition of an integer, and generalizing the theory of partitions was a motivation for Stanley's definition of $(P,\om)$-partitions \cite{StaThesis}.

\subsection{The generating function $\kpo$}

Using $x$ to denote the sequence of variables $x_1, x_2, \ldots$, we can now define our main object of study $\kpo$.  

\begin{definition}\label{def:kpo}
For a labeled poset $(P,\om)$, we define the $(P,\om)$-partition generating function $\kpo = \kpo(x)$ by
\[
\kpo(x) = \sum_{(P,\om)\textnormal{-partition }f} x_1^{|\pop^{-1}(1)|} x_2^{|\pop^{-1}(2)|} \cdots,
\]
where the sum is over all $(P,\om)$-partitions $\pop$.
\end{definition}

For example, the $(P,\om)$-partitions of Figure~\ref{fig:popartitions} would each contribute the monomial $x_2^2 x_3^2 x_7$ to their respective $\kpo$.  As another simple example, if $(P,\om)$ is a naturally-labeled chain with three elements, then $\kpo = \sum_{i \leq j \leq k} x_i x_j x_k$.

\begin{example}\label{exa:skew}
Given a skew diagram $\lambda/\mu$ in French notation with $n$ cells, label the cells with the numbers $[n]$ in any way that makes the labels increase down columns and from left to right along rows, as in Figure~\ref{fig:skewschur}(a).   Rotating the result 45$^\circ$ in a counter-clockwise direction and replacing the cells by nodes as in Figure~\ref{fig:skewschur}(b), we get a corresponding labeled poset which we denote by $(\sd{\lambda/\mu}, \om)$ and call a \emph{skew-diagram labeled poset}.  Under this construction, we see that a $(\sd{\lambda/\mu}, \om)$-partition corresponds exactly to a semistandard Young tableau of shape $\lambda/\mu$. Therefore $\gen{\sd{\lambda/\mu}, \om}$ is exactly the skew Schur function $s_{\lambda/\mu}$,
justifying the claim in the introduction that the study of $\kpo$-equality is a generalization of the study of skew Schur equality.  
\begin{figure}[htbp]
\begin{center}
\begin{tikzpicture}[scale=0.7]
\begin{scope}[yshift=-0.4cm]
\draw[thick] (0,3) -- (3,3) -- (3,2) -- (4,2) -- (4,0) -- (2,0) -- (2,1) -- (1,1) -- (1,2) -- (0,2) -- cycle;
\draw (1,2) -- (3,2) 
(2,1) -- (4,1) 
(1,2) -- (1,3) 
(2,1) -- (2,3) 
(3,0) -- (3,2);
\draw[dashed] (2,0) -- (0,0) -- (0,2)
(1,0) -- (1,1) -- (0,1);
\begin{scope}[font = \Large]
\draw (0.5,2.5) node {1};
\draw (1.5,2.5) node {2};
\draw (2.5,2.5) node {3};
\draw (1.5,1.5) node {4};
\draw (2.5,1.5) node {5};
\draw (2.5,0.5) node {6};
\draw (3.5,1.5) node {7};
\draw (3.5,0.5) node {8};
\end{scope}
\end{scope}
\draw (2,-1) node {(a)};
\begin{scope}[xshift=6cm]
\begin{scope} 
\tikzstyle{every node}=[draw, shape=circle, inner sep=2pt]; 
\draw (0,0) node (a1) {1};
\draw (1,1) node (a2) {2};
\draw (2,2) node (a3) {3};
\draw (2,0) node (a4) {4};
\draw (3,1) node (a5) {5};
\draw (4,0) node (a6) {6};
\draw (4,2) node (a7) {7};
\draw (5,1) node (a8) {8};
\draw[double distance=2pt] (a4) -- (a2)
(a6) -- (a5) -- (a3)
(a8) -- (a7);
\draw (a1) -- (a2) --(a3)
(a4) -- (a5) -- (a7)
(a6) -- (a8); 
\end{scope}
\draw (2.5,-1) node {(b)};
\end{scope}
\end{tikzpicture}
\caption{The skew diagram $443/21$ and a corresponding labeled poset.}
\label{fig:skewschur}
\end{center}
\end{figure}
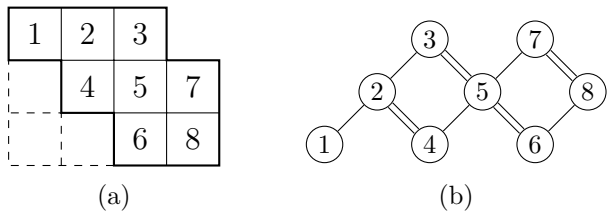
\end{example}

\subsection{Quasisymmetric functions}

It follows directly from the definition of quasisymmetric functions below that $\kpo$ is quasisymmetric.  In fact, $\kpo$ served as motivation for Gessel's original definition \cite{Ges84} of quasisymmetric functions.  For a formal power series $\qsym$ in the variables $x_1, x_2, \ldots$, let $[x_{i_1}^{a_1} x_{i_2}^{a_2} \cdots x_{i_k}^{a_k}]\, \qsym$ denote the coefficient of $x_{i_1}^{a_1} x_{i_2}^{a_2} \cdots x_{i_k}^{a_k}$ in the expansion $\qsym$ into monomials.

\begin{definition}
A quasisymmetric function in the variables $x_1, x_2,\ldots$, say with rational coefficients, is a formal power series $\qsym = \qsym(x) \in \mathbb{Q}[[x_1, x_2, \ldots]]$ of bounded degree such that for every sequence $a_1, a_2, \ldots a_k$ of positive integer exponents, we have
\[
[x_{i_1}^{a_1} x_{i_2}^{a_2} \cdots x_{i_k}^{a_k}]\, \qsym
= [x_{j_1}^{a_1} x_{j_2}^{a_2} \cdots x_{j_k}^{a_k}]\, \qsym
\]
whenever $i_1 < i_2 < \cdots < i_k$ and $j_1 < j_2 < \cdots < j_k$.
\end{definition}
Notice that we get the definition of a symmetric function if we replace the condition that the sequences $i_1, i_2, \ldots i_k$ and $j_1, j_2, \ldots, j_k$ be strictly increasing with the weaker condition that each sequence consists of distinct elements.  As an example, the formal power series
\[
\sum_{1 \leq i < j} x_i^2 x_j
\]
is quasisymmetric but not symmetric.  

In our study of $\kpo$-equality, we will make use of both of the classical bases for the vector space of quasisymmetric functions.  If $\alpha = (\alpha_1, \alpha_2, \ldots, \alpha_k)$ is a composition of $n$, then we define the \emph{monomial quasisymmetric function} $M_\alpha$ by
\[
M_\alpha = \sum_{i_1 < i_2 < \ldots < i_k} x_{i_1}^{\alpha_1} x_{i_2}^{\alpha_2} \cdots x_{i_k}^{\alpha_k}.
\]
It is clear that the set $\{M_\alpha\}$, where $\alpha$ ranges over all compositions of $n$, forms a basis for the vector space of quasisymmetric functions of degree $n$.  As we know, compositions of $n$ are in bijection with subsets of $[n-1]$, and let $S(\alpha)$ denote the set
$\{\alpha_1, \alpha_1+\alpha_2, \ldots, \alpha_1+\alpha_2 + \cdots \alpha_{k-1}\}$.  Furthermore, for a subset $S$ of $[n-1]$, we write $\co(S)$ for the composition $\alpha$ of $n$ satisfying $S(\alpha)=S$.  It will be helpful to sometimes denote $M_\alpha$ by $M_{S(\alpha), n}$.  Notice that these two notations are distinguished by the latter one including the subscript $n$, which is helpful due to  $S(\alpha)$ not uniquely determining $n$.  

The monomial quasisymmetric basis is natural enough that we can see directly from Definition~\ref{def:kpo} how to expand $\kpo$ in this basis.  Indeed, for the composition $\alpha = (\alpha_1, \ldots, \alpha_k)$, the coefficient of $M_\alpha$ in $\kpo$ will be the number of $(P,\om)$-partitions $\pop$ such that $|\pop^{-1}(1)| = \alpha_1$, \ldots, $|\pop^{-1}(k)| = \alpha_k$.  

Again for $\alpha$ a composition of $n$, the second classical basis is composed of the \emph{fundamental quasisymmetric functions} $F_\alpha$, defined by 
\[
F_\alpha = F_{S(\alpha), n} = \sum_{S(\alpha) \subseteq T  \subseteq [n-1]} M_{T,n}.
\]
The relevance of this latter basis to $\kpo$ is due to Theorem~\ref{thm:kexpansion} below, which first appeared in \cite{StaThesis71,StaThesis}, although the first appearance in the language of quasisymmetric functions is in \cite{Ges84}.  Every permutation $\pi \in S_n$ has a descent set $\des(\pi)$ given by $\{i \in [n-1] : \pi(i) > \pi(i+1)\}$, and we will call $\co(\des(\pi))$ the \emph{descent composition} of $\pi$.  Let $\mathcal{L}(P,\om)$ denote the set of all linear extensions of $P$, regarded as permutations of the $\om$-labels of $P$.  For example, for the labeled poset of Figure~\ref{fig:stathm}(b), $\mathcal{L}(P,\om) = \{132, 312\}$.  

\begin{theorem}[\cite{Ges84,StaThesis71,StaThesis}]\label{thm:kexpansion}
Let $(P,\om)$ be a labeled poset with $|P|=n$.  Then
\[
\kpo = \sum_{\pi \in \mathcal{L}(P,\om)} F_{\des(\pi), n}.
\]
\end{theorem}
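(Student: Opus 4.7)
The plan is to establish the fundamental lemma of $(P,\om)$-partitions: the set of all $(P,\om)$-partitions decomposes as a disjoint union, indexed by linear extensions $\pi\in\mathcal{L}(P,\om)$, of the set of $f$ that are \emph{compatible} with $\pi$, meaning
\[
f(\om^{-1}(\pi_1))\le f(\om^{-1}(\pi_2))\le\cdots\le f(\om^{-1}(\pi_n)),
\]
with strict inequality at each $i\in\des(\pi)$. Once the decomposition is in hand, the theorem is immediate: for each fixed $\pi$, the sequence of $f$-values along $\pi$ ranges precisely over weakly increasing sequences of positive integers with strict jumps forced at positions in $\des(\pi)$, so the contribution of compatible $f$'s to $\kpo$ equals $F_{\des(\pi),n}$ by the definition of the fundamental basis, and summing over $\pi$ yields the theorem.

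To prove the decomposition, I would build the map $f\mapsto\pi$ explicitly. Sort $P$ into a sequence $q_1,q_2,\ldots,q_n$ in weakly increasing order of $f$-value, breaking ties by \emph{increasing} order of $\om$-label, and set $\pi_i:=\om(q_i)$. First, $\pi$ lies in $\mathcal{L}(P,\om)$: if $q_i<_P q_j$ then either $f(q_i)<f(q_j)$, in which case sorting puts $q_i$ first, or $f(q_i)=f(q_j)$; in the latter case, any saturated chain from $q_i$ to $q_j$ cannot contain a strict edge (else $f$ would have to strictly increase), so $\om$ increases along the chain, forcing $\om(q_i)<\om(q_j)$, and the tie-break again places $q_i$ first. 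Second, $f$ is compatible with $\pi$ by construction: $f(q_i)\le f(q_{i+1})$ always, and whenever equality holds, the tie-breaking rule gives $\om(q_i)<\om(q_{i+1})$, so $i\notin\des(\pi)$; equivalently, every descent of $\pi$ occurs at a strict jump of $f$.

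For disjointness, the compatibility conditions uniquely recover $\pi$ from $f$ (the values must weakly increase along $\pi$, and within each constant block the $\om$-labels must be increasing to avoid spurious descents), so each $f$ appears under a single $\pi$. For the reverse inclusion, any $f$ compatible with some $\pi$ is a $(P,\om)$-partition: given $a<_P b$ with $a,b$ at positions $i<j$ in $\pi$, the weak inequalities yield $f(a)\le f(b)$; if additionally $\om(a)>\om(b)$, i.e., $\pi_i>\pi_j$, then $\pi_i,\pi_{i+1},\ldots,\pi_j$ cannot be strictly increasing, so some intermediate position is a descent, forcing a strict jump and hence $f(a)<f(b)$. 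The main subtlety is precisely this last step, translating the \emph{local} strict-edge conditions of Definition~\ref{def:popartition} into a \emph{global} condition about descents along a linear extension; once handled, the decomposition, and therefore the theorem, follow.
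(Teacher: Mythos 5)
The paper does not actually prove Theorem~\ref{thm:kexpansion}; it states it with a citation to Stanley and Gessel, so there is no in-paper argument to compare against. Your proposal is correct and is precisely the classical proof from those sources (the ``fundamental lemma'' of $(P,\om)$-partitions): the disjoint decomposition of $\mathcal{A}(P,\om)$ into the sets of $f$ compatible with each $\pi\in\mathcal{L}(P,\om)$, with each block contributing $F_{\des(\pi),n}$, is exactly how the result is established in the literature, and your handling of the tie-breaking, disjointness, and the reverse inclusion is sound.
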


\begin{example}
The labeled poset $(P_1,\om_1)$ of Figure~\ref{fig:stathm}(a) has $\mathcal{L}(P,\om) = \{213, 231\}$ and hence
\begin{align*}
\gen{P_1, \om_1} &= F_{\{1\},3} + F_{\{2\},3} \\
&=  F_{12} + F_{21}\\ 
&=  M_{\{1\},3} + M_{\{2\},3} + 2M_{\{1,2\},3}\\
&=  M_{12} + M_{21} + 2M_{111}.
\end{align*}

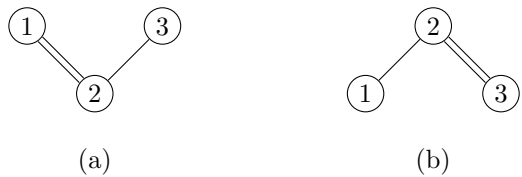
\begin{figure}[htbp]
\begin{center}
\begin{tikzpicture}[scale=0.9]
\begin{scope}
\tikzstyle{every node}=[draw,shape=circle, inner sep=2pt]; 
\draw (1,0) node (a1) {2}; 
\draw (0,1) node (a2) {1}; 
\draw (2,1) node (a3) {3};
\draw[double distance=2pt](a1) -- (a2);
\draw (a1) -- (a3);
\end{scope}
\draw (1,-1) node {(a)};
\begin{scope}[xshift=5cm]
\begin{scope}
\tikzstyle{every node}=[draw,shape=circle, inner sep=2pt]; 
\draw (0,0) node (a1) {1}; 
\draw (2,0) node (a2) {3}; 
\draw (1,1) node (a3) {2};
\draw (a1) -- (a3);
\draw[double distance=2pt](a2) -- (a3);
\end{scope}
\draw (1,-1) node {(b)};
\end{scope}
\end{tikzpicture}
\caption{The posets of least cardinality with nontrivially equal $\kpo$.}
\label{fig:stathm}
\end{center}
\end{figure}

We conclude this section with the nice observation that the labeled poset $(P_2,\om_2)$ of Figure~\ref{fig:stathm}(b) satisfies 
\begin{equation}\label{equ:skewequiv}
(P_1,\om_1) \sim (P_2,\om_2),
\end{equation}
and this is the smallest nontrivial example of $\kpo$-equality.  Moreover, it will serve as a building block for larger $\kpo$-equalities, particularly in Section~\ref{sec:fewlinext}.  That $(P_1,\om_1) \sim (P_2, \om_2)$ is no surprise once we observe that $(P_1,\om_1)$ (resp.\ $(P_2,\om_2)$) comes from the skew diagram 21 (resp.\ 22/1) under the construction of Example~\ref{exa:skew}; it is well-known (e.g.\ \cite[Exer.\ 7.56(a)]{ec2}) that skew Schur functions are invariant under a 180$^\circ$ rotation of their skew diagrams.  
\end{example}

%%%%%%%%%%%%%%%%%%%%%%%%%

\section{The equality question: initial observations}\label{sec:initial}

In this section we collect some basic results about conditions on labeled posets $(P,\om)$ and $(Q,\tau)$ for $(P,\om) \sim (Q,\tau)$.  

The first observation follows directly from the definition of $\kpo$, Definition~\ref{def:kpo}.
\begin{proposition}\label{pro:cardinality}
If labeled posets $(P,\om)$ and $(Q,\tau)$ satisfy $(P,\om) \sim (Q,\tau)$, then $|P|=|Q|$.  
\end{proposition}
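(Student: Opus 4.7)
The plan is to observe that $\kpo$ is a \emph{homogeneous} power series of degree $|P|$, so the degree of $\kpo$ recovers $|P|$ as soon as we know $\kpo \neq 0$. Equality of the two series then forces equality of their degrees.

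More concretely, I would argue as follows. By Definition~\ref{def:kpo}, each $(P,\om)$-partition $\pop$ contributes the monomial $x_1^{|\pop^{-1}(1)|} x_2^{|\pop^{-1}(2)|} \cdots$ to $\kpo$. Since $\pop$ is a function from $P$ to the positive integers, the sets $\pop^{-1}(i)$ partition $P$, so
\[
\sum_{i \geq 1} |\pop^{-1}(i)| = |P|.
\]
Hence every monomial appearing in $\kpo$ has total degree exactly $|P|$, and the same holds with $(Q,\tau)$ in place of $(P,\om)$.

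Next I need to know that $\kpo$ is nonzero, so that its degree is well defined. This is immediate from Theorem~\ref{thm:kexpansion}: the set $\mathcal{L}(P,\om)$ is nonempty (every finite poset has a linear extension), so $\kpo$ is a nontrivial sum of fundamental quasisymmetric functions $F_{\des(\pi),n}$, which are linearly independent and hence sum to a nonzero power series. Alternatively, one can exhibit a specific $(P,\om)$-partition by taking any linear extension $\pi$ of $P$ and defining $\pop$ to assign distinct increasing positive integer values along $\pi$, which trivially satisfies both conditions of Definition~\ref{def:popartition}.

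Combining these two observations, if $\kpo = \kqt$ then both series are nonzero and homogeneous; $\kpo$ has degree $|P|$ and $\kqt$ has degree $|Q|$, forcing $|P| = |Q|$. There is really no obstacle here — the whole argument is a one-line consequence of homogeneity plus nonvanishing — so the only thing to be careful about is explicitly justifying $\kpo \neq 0$ rather than treating it as self-evident.
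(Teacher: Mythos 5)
Your proof is correct and is essentially the paper's argument spelled out in full: the paper simply asserts that the claim ``follows directly from the definition of $\kpo$,'' which is exactly your homogeneity observation, and your extra care in verifying $\kpo \neq 0$ (via the nonemptiness of $\mathcal{L}(P,\om)$) is a sensible bit of rigor the paper leaves implicit.
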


Theorem~\ref{thm:kexpansion}, along with the fact that the fundamental quasisymmetric functions form a basis for the quasisymmetric functions, gives another necessary condition.
\begin{proposition}\label{pro:linexts}
If labeled posets $(P,\om)$ and $(Q,\tau)$ satisfy $(P,\om) \sim (Q,\tau)$, then $|\mathcal{L}(P,\om)| = |\mathcal{L}(Q,\tau)|$.
\end{proposition}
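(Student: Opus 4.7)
The plan is to read off both sides of the claimed equality in the fundamental basis and then take the total coefficient sum on each side.

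First, by Proposition~\ref{pro:cardinality} we may set $n=|P|=|Q|$, since otherwise $\kpo$ and $\kqt$ live in different homogeneous degrees and cannot agree. Next I would invoke Theorem~\ref{thm:kexpansion} to write
\[
\kpo = \sum_{\pi \in \mathcal{L}(P,\om)} F_{\des(\pi), n}
\qquad\text{and}\qquad
\kqt = \sum_{\sigma \in \mathcal{L}(Q,\tau)} F_{\des(\sigma), n},
\]
regrouping by descent set to get $\kpo = \sum_{S\subseteq[n-1]} a_S\, F_{S,n}$ and $\kqt = \sum_{S\subseteq[n-1]} b_S\, F_{S,n}$, where $a_S$ counts linear extensions of $(P,\om)$ with descent set $S$ and similarly for $b_S$.

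Since $\{F_{S,n}\}_{S\subseteq[n-1]}$ is a basis for the degree-$n$ quasisymmetric functions, the hypothesis $\kpo=\kqt$ forces $a_S=b_S$ for every $S\subseteq[n-1]$. Summing over all $S$ then yields
\[
|\mathcal{L}(P,\om)| = \sum_{S\subseteq[n-1]} a_S = \sum_{S\subseteq[n-1]} b_S = |\mathcal{L}(Q,\tau)|,
\]
as desired. (Alternatively, one could avoid the basis argument by evaluating at the principal specialization, but the coefficient-sum argument is cleaner and uses exactly the hint given in the paragraph preceding the statement.)

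I do not anticipate a genuine obstacle here: the result is really a bookkeeping consequence of Theorem~\ref{thm:kexpansion}. The only subtle point worth mentioning is that several linear extensions may share the same descent composition, so one should expand by grouping before comparing, rather than trying to match the two indexed sums term by term.
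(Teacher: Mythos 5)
Your argument is correct and is exactly the paper's: the paper derives this proposition in one line from Theorem~\ref{thm:kexpansion} together with the fact that the fundamental quasisymmetric functions form a basis, which is precisely your regroup-by-descent-set, compare-coefficients, sum-over-$S$ computation written out in full. No issues.
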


The next result answers the question of whether a naturally labeled poset can have the same generating function as a non-naturally labeled poset.

\begin{proposition}
Suppose $(P,\om)$ is a naturally labeled poset and $(P,\om) \sim (Q,\tau)$ for some labeled poset $(Q,\tau)$.  Then $(Q,\tau)$ is also naturally labeled.
\end{proposition}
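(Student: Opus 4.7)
The plan is to identify a single numerical invariant of $\kpo$ that distinguishes naturally labeled posets from those with at least one strict edge, and then invoke the equality $\kpo=\kqt$ to transfer the property from $(P,\om)$ to $(Q,\tau)$.

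The invariant I would use is the coefficient $[x_1^n]\kpo$, where $n=|P|$; by Proposition~\ref{pro:cardinality} we have $|P|=|Q|=n$, so the same monomial makes sense for both generating functions. From Definition~\ref{def:kpo}, a $(P,\om)$-partition $\pop$ contributes $x_1^n$ exactly when $|\pop^{-1}(1)|=n$, i.e.\ when $\pop$ is identically $1$. The constant map $\pop\equiv 1$ is order-preserving automatically; it satisfies the strictness requirement $\pop(a)<\pop(b)$ on every strict cover $a\pcovered b$ if and only if no strict cover exists. Hence $[x_1^n]\kpo=1$ when $(P,\om)$ is naturally labeled and $0$ otherwise.

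Combining the two observations: if $(P,\om)$ is naturally labeled and $(P,\om)\sim(Q,\tau)$, then $[x_1^n]\kqt=[x_1^n]\kpo=1$, which forces $(Q,\tau)$ to have no strict edges and therefore be naturally labeled as well. The argument presents no real obstacle — it rests entirely on the one-line evaluation of this single coefficient, together with the preservation of $|P|$ under $\sim$. An equivalent route goes through Theorem~\ref{thm:kexpansion}: the identity permutation $12\cdots n$ is the unique element of $S_n$ with empty descent set, and it belongs to $\mathcal{L}(P,\om)$ if and only if $(P,\om)$ is naturally labeled, so the coefficient of $F_{\emptyset,n}$ in the fundamental expansion plays exactly the same detector role.
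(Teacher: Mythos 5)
Your argument is correct. Your primary route --- reading off the coefficient of $x_1^n$ directly from Definition~\ref{def:kpo} and observing that it is $1$ or $0$ according to whether the constant map $\pop\equiv 1$ is a legal $(P,\om)$-partition, i.e.\ according to whether any strict edge exists --- is a slightly more elementary variant of the paper's proof: it needs only the monomial expansion of $\kpo$ plus Proposition~\ref{pro:cardinality}, and bypasses Theorem~\ref{thm:kexpansion} entirely. The paper instead detects natural labeling by whether the identity permutation lies in $\mathcal{L}(P,\om)$, equivalently whether $F_{\emptyset,|P|}$ occurs in the $F$-expansion of $\kpo$; since the $F_\alpha$ form a basis, that occurrence is determined by $\kpo$ alone. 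Your closing remark is exactly this argument, so the two detectors are interchangeable: the $F_{\emptyset,n}$ route fits the paper's general reliance on descent sets of linear extensions, while your monomial route has the small advantage of working straight from the definition. Both correctly reduce the proposition to a single one-bit invariant of the generating function.
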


\begin{proof}
Observe that $(P,\om)$ is naturally labeled if and only if the identity permutation is an element of $\mathcal{L}(P,\om)$.  By Theorem~\ref{thm:kexpansion}, the latter condition is equivalent to $F_{\emptyset, |P|}$ appearing in the $F$-expansion of $\kpo$, and the result follows.  
\end{proof}

One can check that the vector space of quasisymmetric functions is an algebra \cite[Exer.\ 7.93]{ec2}, and one might wonder about the meaning of the product of two $\kpo$ generating functions.  
The answer is as nice as one might hope.  Given two labeled posets $(P_1,\om_1)$ and $(P_2,\om_2)$, we define the disjoint union $(P,\om) = \du{P_1}{\om_1}{P_2}{\om_2}$ as follows.  As usual, $P$ is just the disjoint union of $P_1$ and $P_2$.  For $a \in P$, the labeling $\om$ is defined by $\om|_{P_1}(a) = \om_1(a)$ and $\om|_{P_2}(a) = \om_2(a) + |P_1|$.  In other words, the labels of $P_1$ are preserved, and the labels of $P_2$ are all increased by $|P_1|$ (cf.\ \cite[I.12]{StaThesis}), as in Figure~\ref{fig:product}. 

\begin{proposition}\label{pro:product}
For labeled posets $(P_1,\om_1)$ and $(P_2,\om_2)$, we have 
\[
\genx{\du{P_1}{\om_1}{P_2}{\om_2}} = \gen{P_1,\om_1} \gen{P_2,\om_2}\,.
\]
\end{proposition}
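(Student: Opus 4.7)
The plan is to exhibit an explicit bijection between $(P,\om)$-partitions of the disjoint union and pairs consisting of a $(P_1,\om_1)$-partition together with a $(P_2,\om_2)$-partition, and then observe that the weight monomial factors accordingly.

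First I would check that the labeling $\om$ on $P = P_1 \sqcup P_2$ preserves the strict/weak edge data of each component. Since $\om|_{P_1} = \om_1$, every cover relation inside $P_1$ retains its strict-or-weak status. Inside $P_2$, the labels are all shifted by the same constant $|P_1|$, so for any $a \pcovered b$ in $P_2$ we have $\om(a) > \om(b)$ if and only if $\om_2(a) > \om_2(b)$; hence the strict/weak status is preserved inside $P_2$ as well. Since there are no cover relations between elements of $P_1$ and elements of $P_2$ in the disjoint union, there are no further constraints to worry about.

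Next I would define, for any $(P,\om)$-partition $\pop : P \to \mathbb{Z}_{>0}$, its restrictions $\pop_1 = \pop|_{P_1}$ and $\pop_2 = \pop|_{P_2}$. The conditions of Definition~\ref{def:popartition} involve only pairs $a \leq_P b$; these pairs either lie entirely in $P_1$ or entirely in $P_2$ (since $P_1$ and $P_2$ are incomparable in $P$). Hence $\pop_i$ is a $(P_i,\om_i)$-partition for $i = 1,2$. Conversely, given any pair $(\pop_1, \pop_2)$ of $(P_i,\om_i)$-partitions, defining $\pop$ piecewise produces a $(P,\om)$-partition, since no new comparabilities arise. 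This is clearly a bijection.

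Finally, the weight monomial factors: for $\pop$ corresponding to $(\pop_1, \pop_2)$,
\[
\prod_i x_i^{|\pop^{-1}(i)|} = \prod_i x_i^{|\pop_1^{-1}(i)|} \cdot \prod_i x_i^{|\pop_2^{-1}(i)|}.
\]
Summing over all $(P,\om)$-partitions and using the bijection gives
\[
\genx{\du{P_1}{\om_1}{P_2}{\om_2}} = \sum_{\pop_1,\pop_2} \prod_i x_i^{|\pop_1^{-1}(i)|}\prod_i x_i^{|\pop_2^{-1}(i)|} = \gen{P_1,\om_1}\gen{P_2,\om_2},
\]
as desired. There is no real obstacle here; the only subtle point is the label-shift verification in the first paragraph, which ensures that reading $\om$ on $P_2$ records exactly the same strictness pattern as $\om_2$.
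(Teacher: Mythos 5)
Your proof is correct and follows essentially the same route as the paper's: a bijection between $(\du{P_1}{\om_1}{P_2}{\om_2})$-partitions and pairs of $(P_i,\om_i)$-partitions, followed by factoring the weight monomials in the expansion of Definition~\ref{def:kpo}. You simply spell out the label-shift verification and the restriction argument that the paper leaves as ``obvious.''
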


\begin{proof}
Since the strict and weak edges of $\du{P_1}{\om_1}{P_2}{\om_2}$ remain as they were in $(P_1,\om_1)$ and $(P_2,\om_2)$, there is an obvious correspondence 
between $(\du{P_1}{\om_1}{P_2}{\om_2})$-partitions and pairs consisting of a $(P_1,\om_1)$-partition and a $(P_2,\om_2)$-parti\-tion, as in the example of Figure~\ref{fig:product}.  The result now follows by considering the expansion of the generating functions in terms of monomials, as given in Definition~\ref{def:kpo}.
\end{proof}

\begin{figure}[htbp]
\begin{center}
\begin{tikzpicture}[scale=0.9]
\begin{scope}
\tikzstyle{every node}=[shape=circle, inner sep=2pt]; 
\draw (1,0) node[draw] (a1) {2} + (0.4,-0.3) node {\textbf{7}}; 
\draw (0,1) node[draw] (a2) {1} + (-0.5,0) node {\textbf{9}}; 
\draw (2,1) node[draw] (a3) {3} + (-0.5,0) node {\textbf{7}};
\draw[double distance=2pt](a1) -- (a2);
\draw (a1) -- (a3);
\begin{scope}[xshift=3cm]
\tikzstyle{every node}=[shape=circle, inner sep=2pt]; 
\draw (0,0) node[draw] (b1) {5} + (0.5,0) node {\textbf{4}}; 
\draw (0,1) node[draw] (b2) {4} + (0.5,0) node {\textbf{6}}; 
\draw[double distance=2pt](b1) -- (b2);
\end{scope}
\end{scope}
\begin{scope}[xshift=5cm,yshift=5mm]
\draw (0,0) node {$\longleftrightarrow$};
\end{scope}
\begin{scope}[xshift=7cm]
\tikzstyle{every node}=[shape=circle, inner sep=2pt]; 
\draw (1,0) node[draw] (a1) {2} + (0.4,-0.3) node {\textbf{7}}; 
\draw (0,1) node[draw] (a2) {1} + (-0.5,0) node {\textbf{9}}; 
\draw (2,1) node[draw] (a3) {3} + (-0.5,0) node {\textbf{7}};
\draw[double distance=2pt](a1) -- (a2);
\draw (a1) -- (a3);
\end{scope}
\begin{scope}[xshift=11cm]
\begin{scope}
\tikzstyle{every node}=[shape=circle, inner sep=2pt]; 
\draw (0,0) node[draw] (b1) {2} + (0.5,0) node {\textbf{4}}; 
\draw (0,1) node[draw] (b2) {1} + (0.5,0) node {\textbf{6}}; 
\draw[double distance=2pt](b1) -- (b2);
\end{scope}
\end{scope}
\end{tikzpicture}
\caption{The $(\du{P_1}{\om_1}{P_2}{\om_2})$-partition on the left is comprised of the $(P_1,\om_1)$-partition and $(P_2,\om_2)$-partition on the right.}
\label{fig:product}
\end{center}
\end{figure}
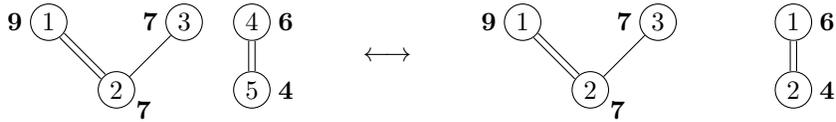

For the rest of this section, we consider some natural involutions we can perform on a labeled poset $(P,\om)$. These involutions appear previously in, for example, \cite{Ehr96,Ges90,LaPy08,MalThesis, MaRe95, MaRe98} and \cite[Exer.~7.94(a)]{ec2}.  First, we can switch strict and weak edges, denoting the result $\switch{P,\om}$. Secondly, we can rotate the labeled poset 180$^\circ$, preserving strictness and weakness of edges; we denote the resulting labeled poset $\rotate{P,\om}$.  Observe that these bar and star involutions commute; an example is given in Figure~\ref{fig:involutions}.    

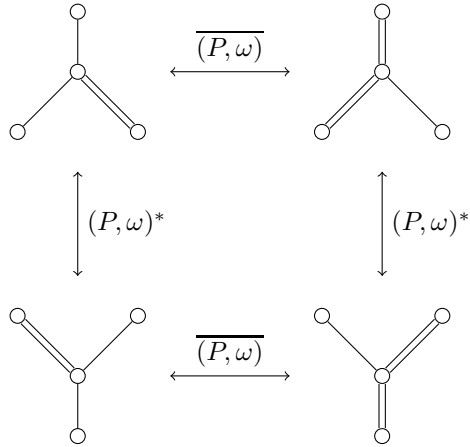
\begin{figure}[htbp]
\begin{center}
\begin{tikzpicture}[scale=1.0]
\matrix [row sep=20mm, column sep=20mm]  {
\node(NW) {
\begin{tikzpicture}[scale=0.8]
\tikzstyle{every node}=[draw, shape=circle, inner sep=2pt]; 
\draw (0,0) node (a1) {};
\draw (2,0) node (a2) {};
\draw (1,1) node (a3) {};
\draw (1,2) node (a4) {};
\draw (a1) -- (a3)
(a3) -- (a4);
\draw[double distance=2pt] (a2) -- (a3);
\end{tikzpicture}
};
&
\node(NE) {
\begin{tikzpicture}[scale=0.8]
\tikzstyle{every node}=[draw, shape=circle, inner sep=2pt]; 
\draw (0,0) node (b1) {};
\draw (2,0) node (b2) {};
\draw (1,1) node (b3) {};
\draw (1,2) node (b4) {};
\draw[double distance=2pt] (b1) -- (b3)
(b3) -- (b4);
\draw (b2) -- (b3); 
\end{tikzpicture}
};
\\
\node(SW) {
\begin{tikzpicture}[scale=0.8]
\tikzstyle{every node}=[draw, shape=circle, inner sep=2pt]; 
\draw (1,0) node (c1) {};
\draw (1,1) node (c2) {};
\draw (0,2) node (c3) {};
\draw (2,2) node (c4) {};
\draw (c1) -- (c2)
(c2) -- (c4);
\draw[double distance=2pt](c2) -- (c3);
\end{tikzpicture}
};
&
\node(SE) {
\begin{tikzpicture}[scale=0.8]
\tikzstyle{every node}=[draw, shape=circle, inner sep=2pt]; 
\draw (1,0) node (d1) {};
\draw (1,1) node (d2) {};
\draw (0,2) node (d3) {};
\draw (2,2) node (d4) {};
\draw (d2) -- (d3);
\draw[double distance=2pt](d1) -- (d2)
(d2) -- (d4);
\end{tikzpicture}
};
\\ }; 
\draw[<->,shorten >=2mm, shorten <= 2mm] (NW) -- (NE) node [midway,auto] {$\switch{P,\om}$};
\draw[<->,shorten >=3mm, shorten <=3mm] (NW) -- (SW) node [midway,auto] {$\rotate{P,\om}$};
\draw[<->,shorten >=3mm, shorten <=3mm] (NE) -- (SE) node [midway,auto] {$\rotate{P,\om}$};
\draw[<->,shorten >=2mm, shorten <= 2mm] (SW) -- (SE) node [midway,auto] {$\switch{P,\om}$};
\end{tikzpicture}
\caption{The bar and star involutions.}
\label{fig:involutions}
\end{center}
\end{figure}

Although in subsequent sections we will prefer to view these bar and star involutions as described above, it will be helpful for Lemma~\ref{lem:involutions} below to formulate them in terms of their effect on the $\om$-labels.  
We see that 
\begin{equation}\label{equ:barisom}
\switch{P,\om} \cong (P,\om'),
\end{equation}
where $\om'$ is defined by 
\begin{equation}\label{equ:bar}
\om'(a) = |P|+1-\om(a)
\end{equation}
for all $a \in P$.  We also have 
\begin{equation}\label{equ:starisom}
\rotate{P,\om} \cong (P^*,\om'),
\end{equation}
where $P^*$ is the 180$^\circ$ rotation (dual) of $P$, with the $w'$-labels of $P$ from \eqref{equ:bar} being unchanged under this rotation.

\begin{remark}  For the skew-diagram labeled posets of Example~\ref{exa:skew}, we see that switching strict and weak edges corresponds to transposing/conjugating the skew diagram $\lambda/\mu$. 
Therefore, the map that sends $\kpo$ to $\genx{\switch{P,\om}}$ extends the classical automorphism $\omega$ of the symmetric functions \cite[Exer.~7.94(a)]{ec2}.  (Here we use $\omega$ for two entirely different objects, with the hope that the distinction is clear from the context.)  Moreover, since skew Schur functions are invariant under 180$^\circ$ rotation of their skew diagrams, the map that sends $\kpo$ to $\genx{\switchx{\rotate{P,\om}}}$ also extends the automorphism $\omega$ \cite{Ges90,MalThesis, MaRe95}.   
\end{remark}

In view of Theorem~\ref{thm:kexpansion}, we can determine the effect of these involutions on $\kpo$ by examining their effect on the linear extensions of $(P,\om)$. 

\begin{lemma}\label{lem:involutions}  Let $(P,\om)$ be a labeled poset.
\begin{enumerate}
\item The descent sets of the linear extensions of $\switch{P,\om}$ are the complements of the descent sets of the linear extensions of $(P,\om)$.
% i.e., for a subset $S$ of $|P|-1$, $S=\des(\pi)$ for some $\pi \in \mathcal{L}\switch{P,\om}$ if and only if $[|P|-1] \setminus $S$ = \des(\pi')$ for some $\pi' \in \mathcal{L}(P,\om)$.
\item The descent compositions of the linear extensions of $\rotate{P,\om}$ are the reverses of the descent compositions of the linear extensions of $(P,\om)$.  
\end{enumerate}
\end{lemma}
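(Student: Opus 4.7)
The plan is to apply the isomorphisms \eqref{equ:barisom} and \eqref{equ:starisom}, which recast each involution as a relabeling of $P$ (together with a $180^\circ$ rotation to $P^*$ in the star case). Writing $n=|P|$, the new labels in both cases are $\om'(a) = n+1-\om(a)$, so the task reduces to tracking how a linear extension $\pi = \pi_1 \pi_2 \cdots \pi_n$ of $(P,\om)$ is transformed into a linear extension of the relabeled (and possibly rotated) poset.

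For part (a), since $\switch{P,\om} \cong (P,\om')$ and the underlying poset structure on $P$ is unchanged, the orderings of elements that realize linear extensions are the same; only the labels we read off differ. Reading with $\om'$ instead of $\om$ turns $\pi$ into the permutation $\pi'$ with $\pi'(i) = n+1-\pi(i)$. The equivalence
\[
\pi'(i) > \pi'(i+1) \iff \pi(i) < \pi(i+1)
\]
immediately gives $\des(\pi') = [n-1]\setminus \des(\pi)$, as desired.

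For part (b), $\rotate{P,\om} \cong (P^*,\om')$, and the linear extensions of $P^*$ are precisely the reverses of those of $P$. Combining the reversal of the sequence with the complementation of labels, the linear extension of $\rotate{P,\om}$ corresponding to $\pi$ is $\pi^*$ with $\pi^*(i) = n+1-\pi(n+1-i)$. A short calculation shows
\[
i \in \des(\pi^*) \iff \pi(n-i) > \pi(n+1-i) \iff n-i \in \des(\pi),
\]
so $\des(\pi^*) = \{n-s : s \in \des(\pi)\}$. The remaining step is to translate this set-level identity into the claim about compositions: if $\des(\pi) = \{s_1 < \cdots < s_{k-1}\}$ so that $\co(\des(\pi),n) = (s_1, s_2-s_1, \ldots, s_{k-1}-s_{k-2}, n-s_{k-1})$, then $\des(\pi^*) = \{n-s_{k-1} < \cdots < n-s_1\}$ yields $\co(\des(\pi^*),n) = (n-s_{k-1}, s_{k-1}-s_{k-2}, \ldots, s_2-s_1, s_1)$, which is indeed the reverse of $\co(\des(\pi),n)$.

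The only real bookkeeping hurdle is this final composition-reversal check in (b); the rest is a direct chase through the definitions of $\switch{\cdot}$, $\rotate{\cdot}$, and descent sets. I do not anticipate any substantial obstacle beyond keeping indices aligned.
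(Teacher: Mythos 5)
Your proposal is correct and follows essentially the same route as the paper: reduce via the isomorphisms $\switch{P,\om}\cong(P,\om')$ and $\rotate{P,\om}\cong(P^*,\om')$, then track how label complementation (and, for the star, sequence reversal) acts on descents. The only difference is that you spell out the final set-to-composition reversal check that the paper leaves implicit, which is a harmless elaboration.
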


\begin{proof}
We first note that if labeled posets $(P_1, \om_1)$ and $(P_2, \om_2)$ are isomorphic, then it is clear that $(P_1,\om_1) \sim (P_2,\om_2)$ by Definition~\ref{def:kpo}, so Theorem~\ref{thm:kexpansion} implies that the descent sets of the linear extensions of $(P_1,\om_1)$ equal those of $(P_2,\om_2)$.   Therefore, for the purposes of this lemma, it suffices to consider $\switch{P,\om}$ and $\rotate{P,\om}$ up to isomorphism, allowing us to avail of \eqref{equ:barisom} and \eqref{equ:starisom}.

Since $\om(a) < \om(b)$ if and only if $\om'(a) > \om'(b)$, it is clear that the descent sets of the linear extensions of $(P,\om')$ are the complements of those of $(P,\om)$, implying (a).  For (b), observe that $(\pi(1), \ldots, \pi(n)) \in \mathcal{L}(P,\om)$ if and only if 
\[
(|P|+1-\pi(n), \ldots, |P|+1-\pi(1)) \in \mathcal{L}(P^*,\om').
\]
 Since $\pi(i) > \pi(i+1)$ if and only if $|P|+1-\pi(i+1) > |P|+1-\pi(i)$, we deduce (b).
\end{proof}

The usefulness of these involutions stems from the following consequence of Theorem~\ref{thm:kexpansion} and Lemma~\ref{lem:involutions}

\begin{proposition}\label{pro:fourfold}
For labeled posets $(P,\om)$ and $(Q,\tau)$, the following are equivalent:
\begin{itemize}
\item $(P,\om) \sim (Q,\tau)$;\medskip
\item $\switch{P,\om} \sim \switch{Q,\tau}$;\medskip
\item $\rotate{P,\om} \sim \rotate{Q,\tau}$;\medskip
\item $\switchx{\rotate{P,\om}} \sim \switchx{\rotate{Q,\tau}}$.
\end{itemize}
\end{proposition}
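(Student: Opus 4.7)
The plan is to combine Theorem~\ref{thm:kexpansion} with Lemma~\ref{lem:involutions} and exploit the fact that the fundamental quasisymmetric functions form a basis. By Theorem~\ref{thm:kexpansion}, $(P,\om)\sim(Q,\tau)$ is equivalent to $|P|=|Q|=n$ together with the statement that the multiset of descent sets $\{\des(\pi) : \pi \in \mathcal{L}(P,\om)\}$ equals the multiset $\{\des(\sigma) : \sigma \in \mathcal{L}(Q,\tau)\}$ (as multisets of subsets of $[n-1]$), since the $F_{S,n}$ are linearly independent. Similarly, Proposition~\ref{pro:cardinality} gives $|P|=|Q|$ from either of the other three equalities, so in each case the equivalence reduces to a statement about multisets of descent sets (or, equivalently, descent compositions).

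First I would establish the equivalence of the first two items. By Lemma~\ref{lem:involutions}(a), the multiset of descent sets of $\mathcal{L}(\switch{P,\om})$ is exactly $\{[n-1]\setminus\des(\pi) : \pi\in\mathcal{L}(P,\om)\}$, and likewise for $(Q,\tau)$. Since complementation $S\mapsto [n-1]\setminus S$ is a bijection on subsets of $[n-1]$, the two multisets of descent sets agree for $(P,\om)$ and $(Q,\tau)$ if and only if they agree for $\switch{P,\om}$ and $\switch{Q,\tau}$. Combining with Theorem~\ref{thm:kexpansion} yields $(P,\om)\sim(Q,\tau) \iff \switch{P,\om}\sim\switch{Q,\tau}$.

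Next I would prove the equivalence of the first and third items in exactly the same way, but using Lemma~\ref{lem:involutions}(b): now one passes to descent compositions and uses that reversal of a composition of $n$ is a bijection on compositions of $n$, hence a bijection on the indexing set of the $F$-basis. Finally, the equivalence with the fourth item is immediate: either compose the bar and star equivalences, or observe that $\switchx{\rotate{\cdot}}$ is itself an involution whose effect on descent compositions (reverse, then complement the underlying descent set; equivalently, apply reverse-complement) is still a bijection on compositions of $n$.

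The argument is essentially formal, so there is no real obstacle, but the one point worth being careful about is the bookkeeping: one must note that the ``operations'' on descent data induced by bar and star are genuine bijections on the indexing set of the basis $\{F_{S,n}\}$ (or $\{F_{\alpha}\}$ with $\alpha\vDash n$), so that equality of $F$-expansions is preserved in both directions. Once this is recorded, all four equivalences drop out of Theorem~\ref{thm:kexpansion} and Lemma~\ref{lem:involutions}.
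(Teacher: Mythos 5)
Your proof is correct and follows exactly the route the paper intends: the paper states Proposition~\ref{pro:fourfold} as an immediate consequence of Theorem~\ref{thm:kexpansion} and Lemma~\ref{lem:involutions} without writing out the details, and your argument is precisely the bookkeeping (bijectivity of complementation and reversal on the indexing set of the $F$-basis) that makes that deduction rigorous.
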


%%%%%%%%%%%%%%%%%%%%%%%%%

\section{Further necessary conditions}\label{sec:morenec}

In this section, we introduce a new tool for deriving necessary conditions for $\kpo$-equality, namely the \emph{jump sequence} of a labeled poset.  

\begin{definition}
We define the \emph{jump} of an element $b$ of a labeled poset $(P,\om)$ 
by considering the number of strict edges on each of the saturated chains from $b$ down to a minimal element of $P$, and taking the maximum such number.
The \emph{jump sequence} of $(P,\om)$, denoted $\jump(P,\om)$, is 
\[
\jump(P,\om) = (j_0, \ldots j_k),
\]
where $j_i$ is the number of elements with jump $i$, and $k$ is the maximum jump of an element of $(P,\om)$.  
\end{definition}

For example, the posets of Figure~\ref{fig:popartitions} both have jump sequence $(3,2)$.  The necessary condition for $\kpo$-equality is now as we might expect. 

\begin{proposition}\label{pro:jump}
If labeled posets $(P,\om)$ and $(Q,\tau)$ satisfy $(P,\om) \sim (Q,\tau)$, then $\jump(P,\om) = \jump(Q,\tau)$.  
\end{proposition}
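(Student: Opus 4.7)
The plan is to extract $\jump(P,\om)$ directly from $\kpo$ by identifying the lexicographically largest monomial that appears with nonzero coefficient and reading off its exponent vector.

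First, establish the key lower bound: for any $(P,\om)$-partition $f$ and any $b \in P$, one has $f(b) \geq j(b) + 1$, where $j(b)$ denotes the jump of $b$. This follows from picking a saturated chain from a minimal element up to $b$ that realizes $j(b)$ strict edges, and noting that $f$ must increase strictly at each strict edge and weakly along the rest of the chain. Second, verify that the assignment $f_{\min}(b) := j(b)+1$ is itself a $(P,\om)$-partition: for a weak cover $a \pcovered b$, one has $j(b) \geq j(a)$ since any chain to $a$ extends through the weak edge to $b$; for a strict cover $a \pcovered b$, extending a jump-realizing chain for $a$ through the strict edge forces $j(b) \geq j(a)+1$. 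Combining these two facts, every $(P,\om)$-partition $f$ satisfies
\[
|\{a \in P : f(a) \leq m\}| \;\leq\; |\{a \in P : j(a) \leq m-1\}| \;=\; j_0 + j_1 + \cdots + j_{m-1}
\]
for every $m \geq 1$, with equality in all partial sums simultaneously when $f = f_{\min}$.

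Now order monomials $x_1^{b_1} x_2^{b_2} \cdots$ lexicographically with $x_1 > x_2 > \cdots$, where $b_i := |f^{-1}(i)|$. The partial-sum inequalities above translate exactly into the lex bounds $b_1 \leq j_0$, then $b_2 \leq j_1$ once $b_1 = j_0$, and so on. Hence the lex-largest exponent vector appearing in $\kpo$ is $(j_0, j_1, \ldots, j_k, 0, 0, \ldots)$, attained by $f_{\min}$. Moreover, the same inductive tightening shows that $f_{\min}$ is the unique $(P,\om)$-partition realizing this vector: setting $b_1 = j_0$ forces $f(a)=1$ on every jump-$0$ element; then $b_2 = j_1$ forces $f(a)=2$ on every jump-$1$ element; and so on. Thus the extremal monomial $x_1^{j_0} x_2^{j_1} \cdots x_{k+1}^{j_k}$ appears in $\kpo$ with coefficient exactly $1$.

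Since this extremal monomial is manifestly determined by the formal power series $\kpo$, the hypothesis $\kpo = \kqt$ forces $(P,\om)$ and $(Q,\tau)$ to share the same extremal monomial, and reading off its exponent vector in both cases yields $\jump(P,\om) = \jump(Q,\tau)$. The only step requiring a bit of care is verifying that $f_{\min}$ really is a $(P,\om)$-partition, which rests on the observation that strict covers must raise the jump of the top element by at least one; the rest is bookkeeping on partial sums.
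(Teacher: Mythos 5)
Your proposal is correct and follows essentially the same route as the paper: the paper also identifies $\jump(P,\om)$ with the exponent vector of the lexicographically greatest monomial in $\kpo$, via the ``greedy'' $(P,\om)$-partition, which is exactly your $f_{\min}$. The only difference is that you carefully verify the facts the paper asserts with ``we see that'' --- namely that $f(b)\geq j(b)+1$ for every $(P,\om)$-partition $f$ and that $b\mapsto j(b)+1$ is itself a valid $(P,\om)$-partition --- which is a welcome but not substantively different elaboration.
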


\begin{proof}
Consider the \emph{greedy} or \emph{lexicographically greatest} $(P,\om)$-partition $g$, defined in the following way.  Choose $g$ so that $|g^{-1}(1)|$ is as large as possible and, from the remaining elements of $P$, $|g^{-1}(2)|$ is as large as possible, and so on.  We see that
\[
\jump(P,\om) = (|g^{-1}(1)|, \ldots, |g^{-1}(k)|)
\]
where $k$ is the maximum jump of an element of $(P,\om)$.  This sequence corresponds directly to $x_1^{|g^{-1}(1)|} \cdots x_k^{|g^{-1}(k)|}$, the monomial in the Definition~\ref{def:kpo} expansion of $\kpo$ with the lexicographically greatest sequence of exponents.  Since $\kpo = \kqt$, these two generating functions must have the same lexicographically greatest sequence of exponents among their monomials, so $\jump(P,\om) = \jump(Q,\tau)$. 
\end{proof}

Because of Proposition~\ref{pro:fourfold}, Proposition~\ref{pro:jump} actually gives four necessary conditions for $\kpo$-equality.  For example, we can compare labeled posets $(P,\om)$ according to $\jump(\switch{P,\om})$; the posets in Figure~\ref{fig:popartitions} have the same $\kpo$, as noted in Figure~\ref{fig:nontrivial}, and so have the same $\jump(\switch{P,\om}) = (2,2,1)$.  As another example, for a naturally labeled poset $(P,\om)$, $\jump(P,\om)=(|P|)$ does not really give useful information whereas looking at $\jump(\switch{P,\om})$ might allow us to say that two naturally labeled posets have different $\kpo$.  Along the same lines, we next give a corollary of Proposition~\ref{pro:jump} for the case of naturally labeled posets.  

\begin{corollary}\label{cor:natural}
  Let $(P,\om)$ and $(Q,\tau)$ be naturally labeled posets with $(P,\om) \sim (Q,\tau)$.  
\begin{enumerate}
\item $P$ and $Q$ have the same number of minimal elements, and the same number of maximal elements.  
\item $P$ and $Q$ have the same maximum chain length. 
\end{enumerate}
\end{corollary}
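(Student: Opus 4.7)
The plan is to reduce the corollary to repeated applications of Proposition~\ref{pro:jump}, using Proposition~\ref{pro:fourfold} to pass from $(P,\om)$ to $\switch{P,\om}$ and $\switchx{\rotate{P,\om}}$, where the jump sequence will carry the information we need.

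First I would observe the key reduction: since $(P,\om)$ is naturally labeled, every edge of $(P,\om)$ is weak, so every edge of $\switch{P,\om}$ is strict. In a labeled poset all of whose edges are strict, the jump of an element $b$ is precisely the maximum number of edges on a saturated chain from $b$ down to a minimal element. Consequently, in $\jump(\switch{P,\om}) = (j_0,\ldots,j_k)$, the entry $j_0$ equals the number of minimal elements of $P$, and the index $k$ of the last entry equals one less than the length (counted by vertices) of a longest chain in $P$.

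Now for part (a), Proposition~\ref{pro:fourfold} gives $\switch{P,\om} \sim \switch{Q,\tau}$, so Proposition~\ref{pro:jump} yields
\[
\jump(\switch{P,\om}) = \jump(\switch{Q,\tau}).
\]
Comparing initial entries gives the equality of the numbers of minimal elements of $P$ and $Q$. To handle maximal elements, I would apply the same argument to $\switchx{\rotate{P,\om}}$: the rotation $\rotate{P,\om}$ preserves strictness of edges and has underlying poset $P^*$, so $\switchx{\rotate{P,\om}}$ has underlying poset $P^*$ with all edges strict. The minimal elements of $P^*$ are the maximal elements of $P$, so the first entry of $\jump(\switchx{\rotate{P,\om}})$ counts the maximal elements of $P$. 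By Proposition~\ref{pro:fourfold}, $\switchx{\rotate{P,\om}} \sim \switchx{\rotate{Q,\tau}}$, and another application of Proposition~\ref{pro:jump} finishes part (a).

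For part (b), I return to $\jump(\switch{P,\om}) = \jump(\switch{Q,\tau})$ and compare lengths rather than first entries. By the observation above, the number of entries in this sequence equals the number of vertices in a longest chain of $P$ (respectively $Q$), so the equality of the two sequences forces equal maximum chain lengths. The only thing to be careful about is the convention for jump sequence length, but since the definition requires $k$ to be the maximum jump actually attained, both sequences having the same length is equivalent to the same maximum jump, hence the same maximum chain length in $P$ and $Q$. There is no real obstacle here; the content is entirely in recognizing that after applying the bar involution, the jump statistic simply becomes the height function on the poset.
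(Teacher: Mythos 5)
Your proof is correct and follows essentially the same route as the paper: apply Propositions~\ref{pro:fourfold} and~\ref{pro:jump} to $\switch{P,\om}$ (and to $\switchx{\rotate{P,\om}}$ for the maximal-element count), then read off the number of minimal elements from the first entry of the jump sequence and the maximum chain length from its length. The paper's proof is just a terser version of yours; your extra observation that the jump statistic on $\switch{P,\om}$ becomes the height function is the same fact the paper uses implicitly.
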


\begin{proof}
By Propositions~\ref{pro:fourfold} and~\ref{pro:jump} , we have $\jump(\switch{P,\om}) = \jump(\switch{Q,\tau})$.  The first entry in these jump sequences is the number of minimal elements.  The maximal-element analogue follows similarly from $\jump(\switchx{\rotate{P,\om}}) = \jump(\switchx{\rotate{Q,\tau}})$.

The length of the composition $\jump(\switch{P,\om})$ is the maximum chain length of $P$, and (b) follows.  
\end{proof}

Corollary~\ref{cor:natural}(a) is false for labeled posets in general: see Figure~\ref{fig:stathm} for a counterexample.  The analogue of Corollary~\ref{cor:natural}(a) for general labeled posets is that $(P,\om)$ and $(Q,\tau)$ have the same number of jump 0 elements, as do their three variations under the bar and star operations.  

\begin{question}\label{que:longestchain}
Is Corollary~\ref{cor:natural}(b) true when $(P,\om)$ and $(Q,\tau)$ are not necessarily naturally labeled?
\end{question}

The answer to Question~\ref{que:longestchain} is ``yes'' in the case of skew-diagram labeled posets, as can be seen from Lemma~8.4 and Corollary 8.11 of \cite{RSvW07}.  

We next consider necessary conditions involving the antichains of posets.  Recall that an \emph{antichain} of a poset $P$ is a subposet of $P$ all of whose elements are incomparable in $P$.  The \emph{width} of a poset is the cardinality of its largest antichain.  A \emph{convex subposet} $S$ of $P$ is a subposet of $P$ such that if $x,y,z \in P$ with $x <_P y <_P z$, then $x, z \in S$ implies $y \in S$.  A convex subposet of $(P,\om)$ inherits its designation of strict and weak edges from $(P,\om)$, and let us say that a convex subposet is \emph{strict} (resp.\ \emph{weak}) if all its edges are strict (resp.\ weak).  

\begin{proposition}\label{pro:convex}
If labeled posets $(P,\om)$ and $(Q,\tau)$ satisfy $(P,\om) \sim (Q,\tau)$, then the sizes of their largest weak (resp.\ strict) convex subposets are the same.
\end{proposition}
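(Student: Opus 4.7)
My plan is to show that the size of the largest weak convex subposet of $(P,\om)$ can be read off directly from $\kpo$, and then to deduce the strict case by applying this result to the bar-involuted posets via Proposition~\ref{pro:fourfold}.

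The key claim will be that the size of the largest weak convex subposet of $(P,\om)$ equals
\[
w(P,\om) := \max_f \max_{j \geq 1} |f^{-1}(j)|,
\]
where $f$ ranges over all $(P,\om)$-partitions. This quantity is manifestly an invariant of $\kpo$: it is simply the largest exponent appearing in any monomial of $\kpo$. Hence, once the claim is established, $\kpo = \kqt$ immediately yields $w(P,\om) = w(Q,\tau)$, handling the weak case.

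One direction of the claim is easy: if $f$ is a $(P,\om)$-partition with $|f^{-1}(j)| = m$, then $S := f^{-1}(j)$ is a weak convex subposet of size $m$. Convexity is forced by $f$ being order-preserving (any $a <_P c <_P b$ with $a,b\in S$ satisfies $j \le f(c) \le j$), and for any cover of $S$ (which must also be a cover in $P$ by convexity), the equality $f(a) = f(b) = j$ forces $\om(a) < \om(b)$, so the edge is weak. For the reverse direction, given a weak convex subposet $S$, I will construct a $(P,\om)$-partition $f$ with $f^{-1}(j) = S$ for some $j$. I will partition $P \setminus S$ into three sets $A$, $B$, $I$, consisting respectively of elements strictly below some $s \in S$, strictly above some $s \in S$, and incomparable to all of $S$. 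Convexity of $S$ ensures these sets are pairwise disjoint and, crucially, that no element of $B$ lies below any element of $A \cup I$. I will then build $f$ by choosing any valid partition of $A \cup I$ (with inherited strict and weak edges) taking values in $\{1, \ldots, j-1\}$ for $j$ sufficiently large, setting $f \equiv j$ on $S$, and using any valid partition of $B$ shifted into $\{j+1, j+2, \ldots\}$. The strict gap around $j$ handles every inter-part edge, and because $S$ is weak, the constant value $j$ on $S$ satisfies all intra-$S$ constraints.

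For the strict case, I will invoke Proposition~\ref{pro:fourfold} to obtain $\switch{P,\om} \sim \switch{Q,\tau}$ and observe that the strict convex subposets of $(P,\om)$ are exactly the weak convex subposets of $\switch{P,\om}$; applying the weak case to this bar-involuted pair finishes the proof. I expect the main technical obstacle to be the separation properties of $A$, $B$, $I$ used in the construction--specifically the assertion that convexity of $S$ rules out any $b \in B$ sitting at or below an element of $A \cup I$, since such a $b$ together with witnesses $s, s' \in S$ would produce a chain $s' <_P b \le_P a <_P s$ sandwiching $b$ between two elements of $S$ and forcing $b \in S$ by convexity, a contradiction.
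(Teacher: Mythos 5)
Your proof is correct and takes essentially the same route as the paper: the paper likewise observes that a subposet is a weak convex subposet exactly when all its elements can share a common value in some $(P,\om)$-partition, reads off the largest such size as the highest power of any single variable $x_i$ occurring in a monomial of $\kpo$, and handles the strict case by passing to $\switch{P,\om}$. The only difference is that you supply the explicit $A$, $B$, $I$ construction verifying the ``only if'' direction, which the paper states as an observation without proof.
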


\begin{proof}  Observe that in $(P,\om)$, a subposet $S$ forms a weak convex subposet if and only if its elements can all have the same image in some $(P,\om)$-partition.  Therefore, again considering the monomials in $\kpo$, the size of the largest weak convex subposet of $(P,\om)$ is exactly the highest power of any variable $x_i$ that appears in any one of the monomials.  For the strict case, apply the same argument to $\switch{P,\om}$.
\end{proof}

Since all strict convex subposets of naturally labeled posets are antichains, we have the following corollary.

\begin{corollary}\label{cor:width}
If $(P,\om)$ and $(Q,\tau)$ are naturally labeled posets with $(P,\om) \sim (Q,\tau)$, then $P$ and $Q$ have the same width.
\end{corollary}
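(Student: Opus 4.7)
The plan is to obtain this corollary as a direct application of the strict half of Proposition~\ref{pro:convex}. The key observation is that when $(P,\om)$ is naturally labeled, the notion of ``strict convex subposet'' collapses onto the notion of ``antichain,'' so the conclusion of Proposition~\ref{pro:convex} for strict subposets translates exactly into equality of widths.

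First, I would note that a naturally labeled poset has no strict edges at all: every cover relation $a \pcovered b$ satisfies $\om(a) < \om(b)$ by definition. Consequently, any strict convex subposet $S$ of a naturally labeled poset cannot contain any edges of its Hasse diagram. Next, I would verify the elementary fact that a convex subposet of $P$ whose own Hasse diagram has no edges must in fact be an antichain: if $S$ were to contain two comparable elements $x <_P z$, convexity would force $S$ to contain an entire saturated chain from $x$ to $z$ in $P$, and in particular the cover in $S$ realized by the lowest element of that chain above $x$ — contradicting the absence of edges. Conversely, every antichain is trivially convex and vacuously strict, so strict convex subposets are exactly antichains in the naturally labeled setting.

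Combining these observations, the size of the largest strict convex subposet of $(P,\om)$ equals the width of $P$, and similarly for $(Q,\tau)$. Since $(P,\om) \sim (Q,\tau)$, Proposition~\ref{pro:convex} (applied in its strict form) gives that these two sizes are equal, so $P$ and $Q$ have the same width. I expect no real obstacle here — the main content has already been packed into Proposition~\ref{pro:convex}, and the corollary is essentially a translation step. The only care needed is in the short argument identifying edgeless convex subposets with antichains, which is already flagged by the sentence immediately preceding the corollary.
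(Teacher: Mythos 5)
Your proof is correct and follows the same route as the paper: the paper derives the corollary from Proposition~\ref{pro:convex} via the one-line observation that all strict convex subposets of naturally labeled posets are antichains. You have simply spelled out that observation (no strict edges, hence edgeless convex subposets, hence antichains, and conversely), which is exactly the intended argument.
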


Corollary~\ref{cor:width} does not hold for general labeled posets, or even for skew-diagram labeled posets.  As shown in \cite[Example~4.1]{BTvW06}, the skew diagrams of Figure~\ref{fig:ribbons} have equal skew Schur functions, but we see that their corresponding labeled posets have widths 4 and 5 respectively.  
\begin{figure}[htbp]
\begin{center}
\begin{tikzpicture}[scale=0.4]
\begin{scope}
\draw[thick] (0,5) -- (1,5) -- (1,4) -- (2,4) -- (2,2) -- (4,2) -- (4,1) -- (5,1) -- (5,0) -- (3,0) -- (3,1) -- (1,1) -- (1,3) -- (0,3) -- cycle;
\draw (0,4) -- (1,4) -- (1,3) -- (2,3) 
(1,2) -- (2,2) -- (2,1)
(3,2) -- (3,1) -- (4,1) -- (4,0);
\end{scope}
\begin{scope}[xshift=8cm]
\draw[thick] (0,5) -- (1,5) -- (1,4) -- (3,4) -- (3,3) -- (4,3) -- (4,1) -- (5,1) -- (5,0) -- (3,0) -- (3,2) -- (2,2) -- (2,3) -- (0,3) -- cycle;
\draw (0,4) -- (1,4) -- (1,3) 
(2,4) -- (2,3) -- (3,3) -- (3,2) -- (4,2)
(3,1) -- (4,1) -- (4,0);
\end{scope}

\end{tikzpicture}
\caption{Two skew diagrams with equal skew Schur functions, but with different widths when converted to labeled posets.}
\label{fig:ribbons}
\end{center}
\end{figure}
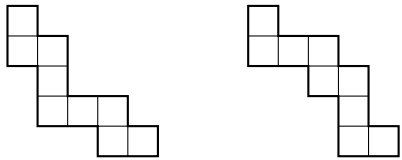

We extend Corollary~\ref{cor:width} with a conjecture that requires the following definition, which is reminiscent of the definition of a jump sequence.

\begin{definition}
Let the antichain sequence of a poset $P$, denoted $\anti(P)$, be the sequence $(a_1, \ldots, a_w)$, where $a_i$ is the number of antichains in $P$ of size $i$, and $w$ is the width of $P$.  
\end{definition}

\begin{conjecture}\label{con:antichain}
If naturally labeled posets $(P,\om)$ and $(Q,\tau)$ satisfy $(P,\om) \sim (Q,\tau)$, then $\anti(P) = \anti(Q)$.
\end{conjecture}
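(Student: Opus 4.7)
The plan is to extract the antichain sequence $\anti(P)$ directly from the monomial expansion of $\kpo$. The starting point is that, for naturally labeled $P$, the coefficient $[M_{(a_1,\dots,a_l)}]\kpo$ counts chains of order ideals $\emptyset = I_0 \subsetneq I_1 \subsetneq \cdots \subsetneq I_l = P$ with $|I_j \setminus I_{j-1}| = a_j$: a $(P,\om)$-partition $f$ contributing the monomial $x_{i_1}^{a_1}\cdots x_{i_l}^{a_l}$ (with $i_1 < \cdots < i_l$) corresponds to the chain $I_j = f^{-1}(\{i_1,\dots,i_j\})$. Hence $\kpo$ determines the rank-graded multichain generating function of the distributive lattice $J(P)$ of order ideals of $P$.

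The first reduction converts the conjecture into a statement about down-degrees in $J(P)$. The map $J \mapsto \max(J)$ is a bijection from $J(P)$ to the antichains of $P$, with inverse $A \mapsto \{x \in P : x \leq_P a \text{ for some } a \in A\}$, and $|\max(J)|$ equals the down-degree $d(J)$ of $J$ in the Hasse diagram of $J(P)$; hence $a_k = |\{J \in J(P) : d(J) = k\}|$. Let $b_k = \sum_{J \in J(P)} \binom{d(J)}{k}$, which counts rank-$k$ Boolean intervals $[K,J]$ of $J(P)$ (since $J \setminus K$ is then a $k$-subset of $\max(J)$). The identity $b_k = \sum_{l \geq k} \binom{l}{k} a_l$ inverts by M\"obius inversion to
\[
a_k = \sum_{l \geq k} (-1)^{l-k} \binom{l}{k} b_l,
\]
so it suffices to show that each $b_k$ is determined by $\kpo$.

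To recover $b_k$, I would compute, for each composition $\alpha \vDash k$, the invariant $T_\alpha = \sum_{[K,J] \text{ of rank } k} n_\alpha(J \setminus K)$, where $n_\alpha(S)$ is the number of chains of order ideals of $S$ with consecutive differences $\alpha$. Each $T_\alpha$ is a boundary-aware sum of monomial coefficients of the form $[M_{(i,\alpha,n-i-k)}]\kpo$, hence computable from $\kpo$. The crucial inequality $n_\alpha(S) \leq \binom{k}{\alpha_1,\dots,\alpha_l}$ for $k$-element $S$, with equality for every $\alpha$ precisely when $S$ is an antichain, lets one isolate $b_k$ for small $k$ by solving a small linear system over isomorphism classes of $k$-element posets. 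A direct computation gives
\[
b_2 = T_{(1,1)} - T_{(2)}, \qquad b_3 = T_{(3)} - T_{(1,2)} - T_{(2,1)} + T_{(1,1,1)},
\]
which already establishes the conjecture whenever the width of $P$ (and of $Q$) is at most $3$.

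The main obstacle is that for $k \geq 4$, the number of isomorphism classes of $k$-element posets exceeds the number of compositions of $k$, so the $T_\alpha$ cease to span enough directions to isolate the antichain class. To push further I would exploit the finer coefficients $[M_{(\beta,\alpha,\gamma)}]\kpo = \sum_{(K,J)} n_\beta(K)\, n_\alpha(J \setminus K)\, n_\gamma(P \setminus J)$ for triples of compositions, which supply weights depending on the isomorphism types of $K$ and $P \setminus J$. Whether the resulting enlarged linear system always determines $b_k$ is the heart of the difficulty, and the conjectural status of the statement reflects the fact that no clean extraction of the purely local down-degree distribution of $J(P)$ from the aggregated multichain data has yet been found. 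A parallel plan of attack is to verify the conjecture in restricted families such as forests, posets of bounded height, or posets of bounded width beyond $3$, and to look for a unifying algebraic identity suggested by the small-$k$ formulas above.
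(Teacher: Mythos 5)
The first thing to say is that the statement you were given is presented in the paper as Conjecture~\ref{con:antichain}: the authors supply no proof, only the example of Figure~\ref{fig:anti}, so there is no argument of theirs to compare yours against, and your proposal --- which candidly stops short of a complete argument --- has to be judged as partial progress toward an open problem. On that score it is genuinely good. Every step I checked is correct: for naturally labeled $P$ the coefficient of $M_\alpha$ in $\kpo$ does count chains of order ideals with successive differences $\alpha$; the bijection $J \mapsto \max(J)$ identifies $a_k$ with the number of ideals of down-degree $k$ in $J(P)$; the quantities $b_k = \sum_J \binom{d(J)}{k}$ count rank-$k$ Boolean intervals and determine the $a_k$ by binomial inversion; and your displayed formulas for $b_2$ and $b_3$ check out. (For $b_3$: with $n_{(1,2)}(S) = \#\{\text{minimal elements}\}$, $n_{(2,1)}(S) = \#\{\text{maximal elements}\}$, $n_{(1,1,1)}(S) = \#\{\text{linear extensions}\}$ and $n_{(3)}(S)=1$, the combination $n_{(3)} - n_{(1,2)} - n_{(2,1)} + n_{(1,1,1)}$ evaluates to $1$ on the $3$-element antichain and to $0$ on each of the other four isomorphism classes of $3$-element posets.) Combined with Corollary~\ref{cor:width}, which guarantees that $Q$ also has width at most $3$, this genuinely proves the conjecture for posets of width at most $3$, which is more than the paper establishes.

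The gap is exactly where you locate it, but it is worth stating precisely what is missing. What your method requires at each $k$ is not the inequality $n_\alpha(S) \le \binom{k}{\alpha_1,\ldots,\alpha_l}$ with equality characterizing antichains, but the stronger linear-algebraic fact that the indicator function of the antichain class lies in the span of the functionals $S \mapsto n_\alpha(S)$, $\alpha \vDash k$, on isomorphism classes of $k$-element posets (or at least on those realizable as convex subposets $J \setminus K$ of $P$ or $Q$). This holds for $k \le 3$ by explicit computation, but for $k = 4$ there are $16$ isomorphism classes against only $8$ compositions, and a dimension count by itself neither proves nor refutes membership of the antichain indicator in that span --- it has to be checked. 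If it fails, the refined coefficients $[M_{(\beta,\alpha,\gamma)}]\kpo = \sum_{(K,J)} n_\beta(K)\, n_\alpha(J\setminus K)\, n_\gamma(P\setminus J)$ attach to each interval a weight depending on the global data $K$ and $P\setminus J$ rather than supplying new functionals of the isomorphism type of $J \setminus K$ alone, so it is not at all clear they let you isolate $b_4$. In short: a correct and worthwhile reduction and a genuine partial result, clearly flagged as incomplete, but not a proof of the conjecture --- consistent with its status in the paper.
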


\begin{example}
Figure~\ref{fig:anti} shows two naturally labeled posets $P$ and $Q$ with the same generating function and with $\anti(P) = \anti(Q) = (7, 11, 3)$.  
\begin{figure}[htbp]
\begin{center}
\begin{tikzpicture}[scale=1.0]
\begin{scope}
\tikzstyle{every node}=[draw,shape=circle, inner sep=2pt];
\begin{scope}
\draw (1,0) node (a1) {}; 
\draw (2,0) node (a2) {}; 
\draw (0,1) node (a3) {};
\draw (1,1) node (a4) {};
\draw (2,1) node (a5) {};
\draw (1,2) node (a6) {};
\draw (2,2) node (a7) {};
\draw (a1) -- (a3)
(a1) -- (a4)
(a1) -- (a7)
(a2) -- (a5)
(a2) -- (a6)
(a3) -- (a6)
(a4) -- (a6)
(a5) -- (a7); 
\end{scope}
\begin{scope}[xshift=4cm]
\draw (1,0) node (a1) {}; 
\draw (2,0) node (a2) {}; 
\draw (0,1) node (a3) {};
\draw (1,1) node (a4) {};
\draw (2,1) node (a5) {};
\draw (1,2) node (a6) {};
\draw (2,2) node (a7) {};
\draw (a1) -- (a3)
(a1) -- (a5)
(a2) -- (a4)
(a2) to [bend right] (a7)
(a3) -- (a6)
(a4) -- (a6)
(a5) -- (a7); 
\end{scope}
\end{scope}
\begin{scope}[xshift=3cm,yshift=1cm]
\draw (0,0) node {$\sim$};
\end{scope}
\end{tikzpicture}
\caption{Two naturally labeled posets that have the same $\kpo$ and the same antichain sequence $(7,11,3)$.}
\label{fig:anti}
\end{center}
\end{figure}
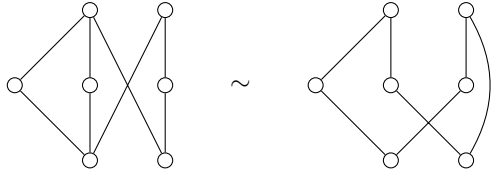
\end{example}

As a final observation of this section, let us note that the combination of all the necessary conditions above is not sufficient for $\kpo$-equality.  Figure~\ref{fig:notsuff} shows two (naturally) labeled posets $(P,\om)$ and $(Q,\tau)$ with $\kpo \neq \kqt$ (consider, for example, their coefficients of $M_{32}$) but for which the conclusions of Propositions~\ref{pro:cardinality}, \ref{pro:linexts}, \ref{pro:jump}, \ref{pro:convex} and Conjecture~\ref{con:antichain} are all satisfied by $(P,\om)$ and $(Q,\tau)$ and by the corresponding pairs under the bar and star operations.    

\begin{figure}[htbp]
\begin{center}
\begin{tikzpicture}[scale=1.0]
\begin{scope}
\tikzstyle{every node}=[draw,shape=circle, inner sep=2pt];
\begin{scope}
\draw (1,0) node (a1) {}; 
\draw (0,1) node (a2) {}; 
\draw (1,1) node (a3) {};
\draw (0,2) node (a4) {};
\draw (1,2) node (a5) {};
\draw (a4) -- (a2) -- (a5) -- (a3) -- (a1);
\end{scope}
\begin{scope}[xshift=3cm]
\draw (0,0) node (a1) {}; 
\draw (1,0) node (a2) {}; 
\draw (0,1) node (a3) {};
\draw (1,1) node (a4) {};
\draw (0,2) node (a5) {};
\draw (a5) -- (a3) -- (a1) -- (a4) -- (a2);
\end{scope}
\end{scope}
\end{tikzpicture}
\caption{Two labeled posets $(P,\om)$ and $(Q,\tau)$ which satisfy all our necessary conditions but for which $\kpo \neq \kqt$.}
\label{fig:notsuff}
\end{center}
\end{figure}
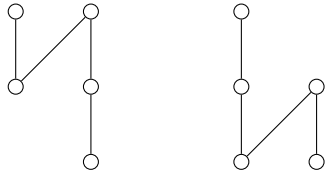

%%%%%%%%%%%%%%%%%%%%%%%%%

\section{Sufficient conditions}\label{sec:suff}

Most of our results in this section will take the following or similar form: if labeled posets $(P,\om)$ and $(Q,\tau)$ satisfy $(P,\om) \sim (Q,\tau)$, then $\widehat{(P,\om)} \sim \widehat{(Q,\tau)}$,
where $\widehat{(P,\om)}$ and $\widehat{(Q,\tau)}$ are modifications of $(P,\om)$ and $(Q,\tau)$ respectively.  

\subsection{Removing elements from labeled posets}

We begin with operations on labeled posets that preserve $\kpo$-equality and that decrease the number of elements in the labeled posets.  As in the previous section, we can apply the bar and star operations to get variations of these results.  In addition, for the next proposition, we could also restrict to naturally labeled posets to give nice statements involving minimal or maximal elements in place of jump 0 elements.

\begin{proposition}\label{pro:removal}
Suppose labeled posets $(P,\om)$ and $(Q,\tau)$ satisfy $(P,\om) \sim (Q,\tau)$.  Letting $\widehat{(P,\om)}$ and  $\widehat{(Q,\tau)}$ respectively denote the labeled posets $(P,\om)$ and $(Q,\tau)$ with their jump 0 elements removed, we have $\widehat{(P,\om)} \sim \widehat{(Q,\tau)}$.
\end{proposition}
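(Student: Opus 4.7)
The plan is to recover $\genx{\widehat{(P,\om)}}$ from $\kpo$ by treating the latter as a polynomial in $x_1$ and reading off its leading coefficient. Write $P_0$ for the set of jump $0$ elements of $(P,\om)$, so $\widehat{P} = P \setminus P_0$; by Proposition~\ref{pro:jump}, $j_0 := |P_0| = |Q_0|$.

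First I would establish two structural facts about $P_0$. (i) $P_0$ is an order ideal of $P$: if $a \leq_P b$ and $\jump(b) = 0$, then any saturated chain from $a$ down to a minimal element extends upward to a saturated chain from $b$, which must have no strict edges, so $\jump(a) = 0$. (ii) Every cover relation between two elements of $P_0$ is weak: a strict cover $a \pcovered b$ with $a,b \in P_0$ would, when prepended by an all-weak saturated chain from $a$ down to a minimum, witness $\jump(b) \geq 1$. In short, $P_0$ is a ``weak'' order ideal sitting at the bottom of $(P,\om)$.

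Next I would set up a bijection between the $(P,\om)$-partitions $f$ with $|f^{-1}(1)|=j_0$ and the $\widehat{(P,\om)}$-partitions $g$. Since only jump~$0$ elements can take the value $1$, any such $f$ must send all of $P_0$ to $1$ and everything in $\widehat{P}$ to a value $\geq 2$; set $g := f|_{\widehat{P}} - 1$. Conversely, given $g$, define $f$ by $f \equiv 1$ on $P_0$ and $f = g+1$ on $\widehat{P}$. The key verification is that this $f$ satisfies Definition~\ref{def:popartition}: fact~(ii) handles all edges inside $P_0$; the axioms on $g$ handle all edges inside $\widehat{P}$; and across any edge from $a \in P_0$ up to $b \in \widehat{P}$ (there are no crossing edges in the other direction, by fact~(i)) we have $f(a) = 1 < 2 \leq f(b)$, satisfying both order-preservation and strictness regardless of edge type.

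Translating the bijection into generating functions, $\kpo$ is in fact a polynomial in $x_1$ of degree at most $j_0$ with coefficients in $\mathbb{Q}[[x_2, x_3, \ldots]]$, and reading off the top coefficient gives
\[
[x_1^{j_0}]\,\kpo \;=\; \genx{\widehat{(P,\om)}}(x_2, x_3, \ldots),
\]
with the analogous identity for $(Q,\tau)$. Equating coefficients of $x_1^{j_0}$ on both sides of $\kpo = \kqt$ and then relabeling $x_{i+1} \mapsto x_i$ yields $\genx{\widehat{(P,\om)}} = \genx{\widehat{(Q,\tau)}}$, as required. The main obstacle is the careful verification that the proposed bijection respects the $(P,\om)$-partition axioms at the interface between $P_0$ and $\widehat{P}$; facts~(i) and~(ii) reduce this to a routine check.
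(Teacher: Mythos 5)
Your proof is correct and follows essentially the same route as the paper: the paper also extracts $\genx{\widehat{(P,\om)}}(x_2,x_3,\ldots)$ from $\kpo$ as the coefficient of the top power $x_1^{j_0}$ of $x_1$. You simply spell out the justification (that $P_0$ is a weak order ideal and the resulting bijection on partitions) that the paper leaves implicit.
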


\begin{proof}
In the expansion of $\kpo$ from Definition~\ref{def:kpo}, we know that the maximum power of $x_1$ which appears is the number $j_0$ of jump 0 elements of $(P,\om)$.  We also see that if we remove any monomials from $\kpo(x)$ that do not contain $x_1^{j_0}$, and then divide through by $x_1^{j_0}$, the result will be $\genx{\widehat{(P,\om)}}(x_2, x_3, \ldots)$.  Therefore, $\genx{\widehat{(P,\om)}}(x)$ can be uniquely obtained from $\kpo(x)$, and the result follows.
\end{proof}

\begin{example}
Starting with the labeled posets of Figure~\ref{fig:popartitions}, apply the bar and star operations to obtain the two new labeled posets of Figure~\ref{fig:removal}(a), each with just two elements of jump 0.  Removing these two elements from each labeled poset gives the labeled posets of Figure~\ref{fig:removal}(b) (cf.\ Figure~\ref{fig:stathm}), consistent with Proposition~\ref{pro:removal}.  
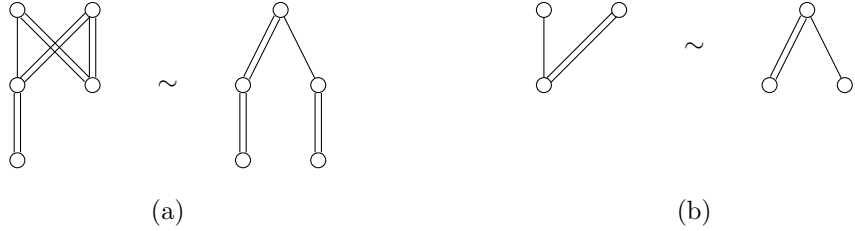
\begin{figure}[htbp]
\begin{center}
\begin{tikzpicture}[scale=1.0]
\begin{scope}
\tikzstyle{every node}=[shape=circle, inner sep=2pt]; 
\begin{scope}[rotate around={180:(0.5,1)}]
\draw (0,0) node[draw] (a1) {}; 
\draw (1,0) node[draw] (a2) {};
\draw (0,1) node[draw] (a3) {};
\draw (1,1) node[draw] (a4) {};
\draw (1,2) node[draw] (a5) {};
\draw (a2) -- (a4);
\draw[double distance=2pt] (a1) -- (a3)
(a2) -- (a3)
(a4) -- (a5); 
\draw (a1)++(0.092,0.035) -- + (0.87,0.87);
\draw (a1)++(0.035,0.092) -- + (0.87,0.87);
\end{scope}
\begin{scope}[xshift=3cm, rotate around={180:(0.5,1)}]
\draw (0.5,0) node[draw] (b1) {};
\draw (0,1) node[draw] (b2) {};
\draw (1,1) node[draw] (b3) {};
\draw (0,2) node[draw] (b4) {};
\draw (1,2) node[draw] (b5) {};
\draw (b1) -- (b2);
\draw[double distance=2pt]
(b1) -- (b3)
(b2) -- (b4)
(b3) -- (b5);
\end{scope}
\end{scope}
\begin{scope}[xshift=2cm,yshift=1cm]
\draw (0,0) node {$\sim$};
\end{scope}
\begin{scope}[xshift=2cm,yshift=-0.7cm]
\draw (0,0) node {(a)};
\end{scope}

\begin{scope}[xshift=7cm]
\begin{scope}
\tikzstyle{every node}=[shape=circle, inner sep=2pt]; 
\begin{scope}[rotate around={180:(0.5,1)}]
\draw (0,0) node[draw] (a1) {}; 
\draw (1,0) node[draw] (a2) {};
\draw (1,1) node[draw] (a4) {};
\draw (a2) -- (a4);
\draw (a1)++(0.092,0.035) -- + (0.87,0.87);
\draw (a1)++(0.035,0.092) -- + (0.87,0.87);
\end{scope}
\begin{scope}[xshift=3cm, rotate around={180:(0.5,1)}]
\draw (0.5,0) node[draw] (b1) {};
\draw (0,1) node[draw] (b2) {};
\draw (1,1) node[draw] (b3) {};
\draw (b1) -- (b2);
\draw[double distance=2pt]
(b1) -- (b3);
\end{scope}
\end{scope}
\begin{scope}[xshift=2cm,yshift=1.5cm]
\draw (0,0) node {$\sim$};
\end{scope}
\begin{scope}[xshift=2cm,yshift=-0.7cm]
\draw (0,0) node {(b)};
\end{scope}
\end{scope}
\end{tikzpicture}
\caption{An example of Proposition~\ref{pro:removal}}
\label{fig:removal}
\end{center}
\end{figure}
\end{example}

By induction, we get the following corollary of Proposition~\ref{pro:removal}.

\begin{corollary}
Suppose labeled posets $(P,\om)$ and $(Q,\tau)$ satisfy $(P,\om) \sim (Q,\tau)$.  Letting $\widehat{(P,\om)}$ and  $\widehat{(Q,\tau)}$ respectively denote the labeled posets $(P,\om)$ and $(Q,\tau)$ with their jump $i$ elements removed for all $i\leq k$ for any fixed $k$, we have $\widehat{(P,\om)} \sim \widehat{(Q,\tau)}$.
\end{corollary}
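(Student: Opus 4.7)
The plan is to proceed by induction on $k$, with Proposition~\ref{pro:removal} serving as both the base case ($k=0$) and the driving tool. For the inductive step, I would first apply Proposition~\ref{pro:removal} to $(P,\om) \sim (Q,\tau)$ to obtain $(\widehat{P}_0, \om_0) \sim (\widehat{Q}_0, \tau_0)$, where the subscript 0 denotes removal of jump 0 elements, and then apply the inductive hypothesis to these smaller labeled posets. The wrinkle is to identify which elements of $(P,\om)$ get removed by this two-step process, which comes down to the following bookkeeping lemma.

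The lemma I need is that, for every $b \in \widehat{P}_0$, one has $\jump_{(\widehat{P}_0,\om_0)}(b) = \jump_{(P,\om)}(b) - 1$. To prove it, I would first observe that the downset of any jump 0 element consists entirely of jump 0 elements, since a strict edge anywhere along a descending chain from the upper element would give it positive jump. From this I would deduce two things: (i) the cover relations of $(\widehat{P}_0, \om_0)$ are exactly the cover relations of $(P,\om)$ between pairs of non-jump-0 elements, since otherwise some jump 0 intermediate $c$ with $b' <_P c <_P b$ would force $b'$ itself to be jump 0; and (ii) every minimal element $m$ of $(\widehat{P}_0, \om_0)$ has $P$-jump exactly $1$, because all of its $P$-covers are jump 0, so the only possible strict edge on a $P$-chain from $m$ is the first one (giving jump $\leq 1$), while $m \in \widehat{P}_0$ forces jump $\geq 1$. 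Combining (i) and (ii), every maximal saturated $\widehat{P}_0$-chain from $b$ to a $\widehat{P}_0$-minimum extends to a maximal saturated $P$-chain from $b$ by appending a single-strict-edge tail, and every $P$-chain decomposes uniquely this way, yielding the jump-shift identity.

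Given the lemma, the inductive step is clean: iterating the jump-shift shows that the elements of $(\widehat{P}_0,\om_0)$ of jump at most $k-1$ correspond bijectively to the elements of $(P,\om)$ of jump $1, 2, \ldots, k$, so the inductive hypothesis applied to $(\widehat{P}_0, \om_0) \sim (\widehat{Q}_0, \tau_0)$ completes the removal of all jump-at-most-$k$ elements from both sides. I expect the main obstacle to be the jump-shift lemma itself, in particular part (i) above -- without confirming that jump 0 removal does not create genuinely new cover relations, one could not be sure that iterating Proposition~\ref{pro:removal} decrements jump values uniformly rather than in some more erratic way.
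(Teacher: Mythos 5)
Your proposal is correct and follows the same route as the paper, which proves this corollary simply by iterating Proposition~\ref{pro:removal} inductively. The jump-shift lemma you isolate (that deleting the jump~$0$ elements lowers every remaining element's jump by exactly one, so the iteration removes precisely the jump~$\leq k$ elements) is the bookkeeping the paper leaves implicit, and your argument for it is sound --- the only slight imprecision is that the truncation of a saturated $P$-chain to its non-jump-$0$ prefix need not terminate at a minimal element of the reduced poset, but since extending that prefix downward within the reduced poset can only add strict edges, the inequality $\jump_P(b)\leq\jump_{\widehat{P}}(b)+1$ still holds and the identity goes through.
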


We conclude this subsection with a result which will be useful in Section~\ref{sec:fewlinext}; it has a stronger hypothesis but a stronger conclusion than Proposition~\ref{pro:removal}.

\begin{proposition}\label{pro:singlejump0}
Suppose labeled posets $(P,\om)$ and $(Q,\tau)$ both have only one jump 0 element.   Letting $\widehat{(P,\om)}$ and  $\widehat{(Q,\tau)}$ respectively denote the labeled posets $(P,\om)$ and $(Q,\tau)$ with this jump 0 element removed, we have $(P,\om) \sim (Q,\tau)$ if and only if $\widehat{(P,\om)} \sim \widehat{(Q,\tau)}$.
\end{proposition}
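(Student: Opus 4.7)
The forward implication is immediate from Proposition~\ref{pro:removal}, since the uniqueness hypothesis ensures that each labeled poset has just the single jump 0 element to delete. My plan therefore focuses on the reverse implication: assume $\widehat{(P,\om)} \sim \widehat{(Q,\tau)}$ and deduce $(P,\om) \sim (Q,\tau)$.

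First I would extract the structural content of the hypothesis: if $(P,\om)$ has a unique jump 0 element $m$, then $m$ is the unique minimum of $P$ and every cover relation $m \pcovered x$ is a strict edge. Every minimum has jump $0$, so uniqueness forces $m$ to be the unique minimum; and for any cover $x$ of $m$, the only saturated chain from $x$ down to a minimum is the single edge $x \pcovered m$ itself (an intermediate element would contradict the definition of cover), so the jump of $x$ is $0$ or $1$ according as that edge is weak or strict. Since $x$ is not a jump 0 element, the edge must be strict. The same holds for $(Q,\tau)$.

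Second, I would use this structural fact to derive the identity
\[
\kpo(x_1, x_2, \ldots) \;=\; \sum_{k \geq 1} x_k \, \genx{\widehat{(P,\om)}}(x_{k+1}, x_{k+2}, \ldots),
\]
and the analogous identity for $(Q,\tau)$. To prove it, I would classify the $(P,\om)$-partitions $\pop$ by the value $k := \pop(m)$. Since every cover $x$ of $m$ is attached by a strict edge we have $\pop(x) > k$, and order-preservation on $\widehat{(P,\om)}$ then forces every value of $\pop$ on $P \setminus \{m\}$ to lie in $\{k+1, k+2, \ldots\}$. The restrictions $\pop|_{P \setminus \{m\}}$ arising this way biject with arbitrary $\widehat{(P,\om)}$-partitions via a uniform shift by $k$, and the monomial contribution of $\pop|_{P\setminus\{m\}}$ under this shift is exactly the monomial of the $\widehat{(P,\om)}$-partition read in the variables $x_{k+1}, x_{k+2}, \ldots$. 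Multiplying by the factor $x_k$ contributed by $m$ and summing on $k$ gives the identity.

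With both identities in hand the proposition follows at once: the assumption $\genx{\widehat{(P,\om)}} = \genx{\widehat{(Q,\tau)}}$ makes the two right-hand sides agree term by term, so $\kpo = \kqt$. The essential step is the structural lemma of the second paragraph; this is precisely where the uniqueness hypothesis is used, and without it the value $\pop(m)$ would not be forced to be strictly smaller than the values on the covers of $m$, breaking the clean shift. The subsequent bijective computation is then routine, and could alternatively be derived from Theorem~\ref{thm:kexpansion} by applying the identity $F_{\{1\}\cup(S+1),\,n}(x) = \sum_{k \geq 1} x_k\, F_{S,\,n-1}(x_{k+1}, x_{k+2}, \ldots)$ to every linear extension of $(P,\om)$, each of which must begin with the label $\om(m)$ and then descend.
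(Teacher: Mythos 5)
Your proof is correct, but its main line of argument is genuinely different from the paper's. The paper handles both directions at once via Theorem~\ref{thm:kexpansion}: every linear extension of $(P,\om)$ must begin with the label of the unique jump~0 element and immediately descend, so the $F$-expansion of $\kpo$ is obtained from that of $\genx{\widehat{(P,\om)}}$ by the invertible substitution $F_\alpha \mapsto F_{(1,\alpha)}$. You instead work straight from Definition~\ref{def:kpo}, stratifying the $(P,\om)$-partitions by the value $k=\pop(m)$ at the unique minimum and proving the explicit identity $\kpo = \sum_{k\ge 1} x_k\, \genx{\widehat{(P,\om)}}(x_{k+1},x_{k+2},\ldots)$; your closing remark about $F_{\{1\}\cup(S+1),\,n}$ is in effect the paper's proof. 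Both arguments rest on the same structural fact, which you prove carefully and the paper leaves implicit, namely that a unique jump~0 element is a unique minimum all of whose covering edges are strict. Your route buys independence from Theorem~\ref{thm:kexpansion} (so it would survive in the oriented-poset setting mentioned in Section~\ref{sec:suff}, where that theorem fails) together with a reusable closed formula; the paper's buys brevity. One small economy available to you: the operation $G \mapsto \sum_{k\ge 1} x_k\, G(x_{k+1},x_{k+2},\ldots)$ sends $M_\alpha$ to $M_{(1,\alpha)}$ and is therefore injective on quasisymmetric functions, so your identity alone yields both implications and the appeal to Proposition~\ref{pro:removal} for the forward direction is not needed.
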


\begin{proof}
Since $(P,\om)$ has just one jump 0 element, every element of $\mathcal{L}(P,\om)$ has a descent in the first position, so there is a straightforward bijection from the descent compositions for the elements of $\mathcal{L}(P,\om)$ to the descent compositions for the elements of $\mathcal{L}(\widehat{(P,\om)})$.  Therefore, by Theorem~\ref{thm:kexpansion}, $\genx{\widehat{(P,\om)}}$ is uniquely determined by $\kpo$ and vice-versa in the following way: for any composition $\alpha$, each copy of $F_\alpha$ in $\genx{\widehat{(P,\om)}}$ corresponds to a copy of $F_{1,\alpha}$ in $\kpo$, and the result follows.  
\end{proof}

\subsection{Combining labeled posets}\label{sub:combining}

When one computes all the $\kpo$-equalities for $n=5$, one of the equivalences that appears is as shown in Figure~\ref{fig:fiveelements}.  Observe that it be can constructed in a natural way starting with the basic skew equivalence of Figure~\ref{fig:stathm}: for each of these two 3-element labeled posets $(P, \om)$, add a 2-element antichain $\{a,b\}$ below it, connecting $a$ (resp.\ $b$) to the minimal element(s) of $(P, \om)$ with weak (resp.\ strict) edges.  We next show that these operations preserve $\kpo$-equality.  To state our result in full generality will require some preliminary work.

\begin{figure}[htbp]
\begin{center}
\begin{tikzpicture}[scale=0.8]
\begin{scope}
\tikzstyle{every node}=[draw,shape=circle, inner sep=2pt];
\begin{scope}
\draw (0,0) node (a1) {1}; 
\draw (2,0) node (a2) {5}; 
\draw (1,1) node (a3) {3};
\draw (0,2) node (a4) {2};
\draw (2,2) node (a5) {4};
\draw (a1) -- (a3)
(a3) -- (a5);
\draw[double distance=2pt] (a2) -- (a3)
(a3) -- (a4);
\end{scope}
\begin{scope}[xshift=4cm]
\draw (0,0) node (a1) {1}; 
\draw (2,0) node (a2) {5}; 
\draw (0,1) node (a3) {2};
\draw (2,1) node (a4) {4};
\draw (1,2) node (a5) {3};
\draw[double distance=2pt] (a2) -- (a3)
(a2) -- (a4)
(a4) -- (a5);
\draw (a1) -- (a3)
(a1) -- (a4)
(a3) -- (a5);
\end{scope}
\end{scope}
\begin{scope}[xshift=3cm,yshift=1cm]
\draw (0,0) node {$\sim$};
\end{scope}
\end{tikzpicture}
\caption{An equivalence that can be built from the equivalence of Figure~\ref{fig:stathm}.}
\label{fig:fiveelements}
\end{center}
\end{figure}
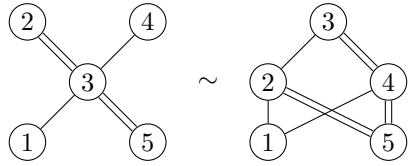

Let us interpret the labeled poset $(P,\om)$ drawn schematically in Figure~\ref{fig:addingposets}(a).  The poset consists of five disjoint convex subposets $P_1, \ldots, P_5$, each with some set of strict and weak edges that are determined by underlying labelings $\om_1, \ldots, \om_5$ respectively.    In addition, each of the minimal elements of $P_i$ for $i=1,2,3$ is connected to each of the maximal elements of $P_j$ for $j=4,5$ with all strict or all weak edges, with the type of edge as designated in the figure.  We allow any of the $P_i$ to be empty.

We could add further layers of posets to $(P,\om)$, but the proof given of Theorem~\ref{thm:addingposets} will be sufficient to show how one would prove an analogue for an even more complicated construction.  One word of warning: the construction shown in Figure~\ref{fig:addingposets}(b) is illegal since the result could not be a labeled poset.  Indeed, let $p_1$ and $p_2$ be minimal elements of $P_1$ and $P_2$ respectively, and let $p_3$ and $p_4$ be maximal elements of $P_3$ and $P_4$ respectively.  If a global labeling $\om$ existed, we would require the contradictory relations $\om(p_1) > \om(p_4) > \om(p_2) > \om(p_3) > \om(p_1)$.

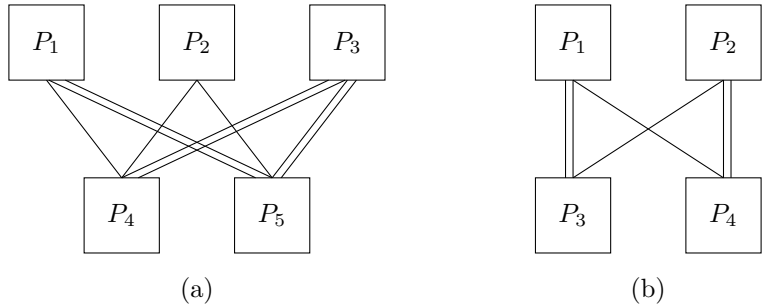
\begin{figure}[htbp]
\begin{center}
\begin{tikzpicture}[scale=1.0]
\draw(1,0) node (a1) {$P_4$}; 
\draw (a1)++(-0.5,-0.5) rectangle + (1,1);
\draw (3,0) node (a2) {$P_5$}; 
\draw (a2)++(-0.5,-0.5) rectangle + (1,1);
\draw (0,2.3) node (a3) {$P_1$};
\draw (a3)++(-0.5,-0.5) rectangle + (1,1);
\draw (2,2.3) node (a4) {$P_2$};
\draw (a4)++(-0.5,-0.5) rectangle + (1,1);
\draw (4,2.3) node (a5) {$P_3$};
\draw (a5)++(-0.5,-0.5) rectangle + (1,1);
\draw (a1)++(0,0.5) -- +(-1,1.3);
\draw (a1)++(0,0.5) -- +(1,1.3);
\draw (a2)++(0,0.5) -- +(-1,1.3);
\draw (a1)++(0,0.5) -- +(2.75,1.3);
\draw (a1)++(0.22,0.5) -- +(2.75,1.3);
\draw (a2)++(0,0.5) -- +(-2.75,1.3);
\draw (a2)++(-0.22,0.5) -- +(-2.75,1.3);
\draw (a2)++(0,0.5) -- +(1,1.3);
\draw (a2)++(0.12,0.5) -- +(1,1.3);
\draw (2,-1) node {(a)};
\begin{scope}[xshift=7cm]
\draw(0,0) node (a1) {$P_3$}; 
\draw (a1)++(-0.5,-0.5) rectangle + (1,1);
\draw (2,0) node (a2) {$P_4$}; 
\draw (a2)++(-0.5,-0.5) rectangle + (1,1);
\draw (0,2.3) node (a3) {$P_1$};
\draw (a3)++(-0.5,-0.5) rectangle + (1,1);
\draw (2,2.3) node (a4) {$P_2$};
\draw (a4)++(-0.5,-0.5) rectangle + (1,1);
\draw (a1)++(0,0.5) -- +(0,1.3);
\draw (a1)++(-0.1,0.5) -- + (0,1.3);
\draw (a1)++(0,0.5) -- +(2,1.3);
\draw (a2)++(0,0.5) -- +(-2,1.3);
\draw (a2)++(0,0.5) -- +(0,1.3);
\draw (a2)++(0.1,0.5) -- + (0,1.3);
\draw (1,-1) node {(b)};
\end{scope}
\end{tikzpicture}
\caption{(a) is the labeled poset $(P,\om)$ of Theorem~\ref{thm:addingposets}, while (b) shows an illegal construction.}
\label{fig:addingposets}
\end{center}
\end{figure}

\begin{theorem}\label{thm:addingposets}
Suppose we have labeled posets $(P_i, \om_i)$ and $(Q_i, \tau_i)$ for $i=1,\ldots,5$ such that $(P_i, \om_i) \sim (Q_i, \tau_i)$ for all $i$.  Construct the labeled poset $(P,\om)$ as shown in Figure~\ref{fig:addingposets}, and similarly construct $(Q,\tau)$.  Then $(P,\om) \sim (Q,\tau)$.
\end{theorem}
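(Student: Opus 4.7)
The plan is to decompose $\kpo$ as a sum of products of generating functions for the individual pieces, and then show that each factor depends on $(P_i,\om_i)$ only through $K_{(P_i,\om_i)}$. Since the compatibility rules between the pieces depend only on Figure~\ref{fig:addingposets}(a), replacing each piece by an equivalent one will leave the whole $\kpo$ unchanged.

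The first step is to observe that a $(P,\om)$-partition is the same data as a tuple $(f_1,\ldots,f_5)$, where $f_k$ is a $(P_k,\om_k)$-partition, subject only to the inequalities coming from the cross-piece covers. Because each $f_k$ is order-preserving, $\max f_k$ is attained at a maximal element of $P_k$ and $\min f_k$ at a minimal one, so each ``block'' of covers between $P_j$ ($j \in \{4,5\}$) and $P_i$ ($i \in \{1,2,3\}$) collapses to the single inequality $\max f_j \star \min f_i$, with $\star$ being $\leq$ for weak blocks and $<$ for strict blocks. Grouping $(P,\om)$-partitions by the tuple $(a_4,a_5,b_1,b_2,b_3)$ of these extremal values and factoring monomials over the five disjoint pieces then gives
\[
\kpo(x) \;=\; \sum_{(a_4,a_5,b_1,b_2,b_3)\text{ compatible}} \; \prod_{j \in \{4,5\}} K^{\max=a_j}_{(P_j,\om_j)}(x) \; \prod_{i \in \{1,2,3\}} K^{\min=b_i}_{(P_i,\om_i)}(x),
\]
where $K^{\max=a}_{(R,\sigma)}$ (resp.\ $K^{\min=b}_{(R,\sigma)}$) denotes the generating function for $(R,\sigma)$-partitions with maximum (resp.\ minimum) value exactly $a$ (resp.\ $b$), and ``compatible'' is the system of inequalities dictated by the strict/weak types of the six edge-blocks in the figure.

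The remaining input I would establish as a short lemma: for any labeled poset $(R,\sigma)$, each $K^{\max=a}_{(R,\sigma)}$ and $K^{\min=b}_{(R,\sigma)}$ is uniquely determined by $K_{(R,\sigma)}$. Indeed, $K^{\max \leq a}_{(R,\sigma)}(x) = K_{(R,\sigma)}(x_1,\ldots,x_a,0,0,\ldots)$, and then $K^{\max=a}_{(R,\sigma)} = K^{\max \leq a}_{(R,\sigma)} - K^{\max \leq a-1}_{(R,\sigma)}$; setting $x_i=0$ for $i < b$ handles the min case analogously. Applied to the decomposition above, this lemma shows that each factor is unchanged when $(P_i,\om_i)$ is replaced by $(Q_i,\tau_i)$ under the hypothesis $(P_i,\om_i) \sim (Q_i,\tau_i)$, while the compatibility set is unaffected because it depends only on the figure; hence $\kpo = \kqt$. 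The main obstacle is really just being careful with the reduction of cross-piece constraints to a single max/min inequality per edge-block, which hinges on the order-preservation of each $f_k$; once this is in hand, the same template proves the analogous statement for any multi-layer construction in which each piece lies in at most one ``top'' and one ``bottom'' role, explaining the authors' remark that the five-piece version suffices to illustrate the general method.
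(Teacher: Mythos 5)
Your proof is correct, but it takes a genuinely different route from the paper's. The paper invokes Theorem~\ref{thm:kexpansion} and builds a descent-preserving bijection $\mathcal{L}(P,\om)\to\mathcal{L}(Q,\tau)$: it first writes down an explicit global labeling $\om$ compatible with the strict/weak edges of Figure~\ref{fig:addingposets}(a), then, for each $\pi\in\mathcal{L}(P,\om)$, replaces each subword $\pi_i$ (the labels from $P_i$) by a canonical word with the same descent set coming from the bijection $\mathcal{L}(P_i,\om_i)\to\mathcal{L}(Q_i,\tau_i)$, and checks the result is a linear extension of $(Q,\tau)$ with the same descents. You instead work entirely at the level of the monomial expansion: you factor the set of $(P,\om)$-partitions over the five convex pieces, observe that the complete-bipartite blocks of cross-edges collapse to a single inequality $\max f_j \star \min f_i$ by order-preservation, and then show each conditional generating function $K^{\max=a}$, $K^{\min=b}$ is recovered from $\kpo$ by setting variables to zero and differencing. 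Each approach buys something: the paper's argument yields the finer structural fact of a descent-preserving bijection on linear extensions (and the explicit labeling \eqref{equ:omdef} is of independent use), whereas yours never needs a global labeling or the $F$-expansion, so it applies verbatim to oriented posets --- where, as the paper notes, Theorem~\ref{thm:kexpansion} fails --- and it extends cleanly to more layers via the two-sided specialization $K^{\max=a,\min=b}$ obtained by inclusion--exclusion. Two routine points you should make explicit if writing this up: the restriction of a $(P,\om)$-partition to each convex piece $P_k$ is exactly a $(P_k,\om_k)$-partition and conversely (this uses that the Hasse diagram of $P$ is the disjoint union of the Hasse diagrams of the $P_k$ together with the cross-edges), and the degenerate case of empty pieces, where the corresponding factor is $1$ and the associated inequalities are vacuous.
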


\begin{proof}
In view of Theorem~\ref{thm:kexpansion}, we wish to construct a descent-preserving bijection from $\mathcal{L}(P,\om)$ to $\mathcal{L}(Q,\tau)$.  Before doing this, we first need to define $\om$ in terms of the $\om_i$ in a way that respects the designation of strict and weak edges, both within the $P_i$ and as shown explicitly in Figure~\ref{fig:addingposets}(a).  One can check that the following definition of $\om(a)$ for $a \in P$ has the necessary properties:
\begin{gather}\label{equ:omdef}
\begin{aligned}
\om|_{P_3}(a) & = \om_3(a); \\
\om|_{P_4}(a) & = \om_4(a) + |P_3|;\\
\om|_{P_1}(a) & = \om_1(a) + |P_3| + |P_4|;\\
\om|_{P_5}(a) & = \om_5(a) + |P_3| + |P_4| + |P_1|;\\
\om|_{P_2}(a) & = \om_2(a) + |P_3| + |P_4| + |P_1| + |P_5|.
\end{aligned}
\end{gather}
In effect, the labels of each $P_i$ are preserved, but are shifted as necessary to respect the strict and weak edges that appear in Figure~\ref{fig:addingposets}(a).  The labeling $\tau$ will be defined similarly, with the letter $P$ replaced by the letter $Q$.  
Figure~\ref{fig:fiveelements} serves as an example of $(P,\om)$ and $(Q,\tau)$, where $P_2$ and $P_3$ are both empty, and $P_4$ and $P_5$ have just a single element each.

Now let $\pi \in \mathcal{L}(P,\om)$.  For $i=1,\ldots,5$, let $\pi_i$ correspond to the subword of $\pi$ consisting of $\om$-labels of elements of $P_i$.  Since $\gen{P_i, \om_i} = \gen{Q_i, \tau_i}$ for all $i$, there is a descent-preserving bijection from $\mathcal{L}(P_i,\om_i)$ to $\mathcal{L}(Q_i, \tau_i)$.  Therefore, for each $\pi_i$ there exists a canonical word $\rho_i$ consisting of $\tau$-labels of elements of $Q_i$ that uses the same letters as $\pi_i$ and has the same descent set as $\pi_i$.  Form a permutation $\rho$ by replacing each subword $\pi_i$ of $\pi$ with $\rho_i$ so that the letters of $\rho_i$ occupy the positions formerly occupied by the letters of $\pi_i$.  We claim that the map $\phi$ that sends $\pi$ to $\rho$ is a descent-preserving bijection from $\mathcal{L}(P,\om)$ to $\mathcal{L}(Q,\tau)$.  As an example, the permutation $\pi = 51342$ for the labeled poset on the left in Figure~\ref{fig:fiveelements} would yield $\rho=51243$.

We first show that $\rho \in \mathcal{L}(Q,\tau)$.  For $k \in [|Q|]$, we wish to show that the $\tau$-label $\rho(k)$ cannot appear above the $\tau$-label $\rho(k+1)$ in $(Q,\tau)$.  
First consider the case when the labels $\rho(k)$ and $\rho(k+1)$ both come from the same $Q_i$, as is the case for $k=3,4$ in our example.  By definition of $\rho$, $\rho(k)$ and $\rho(k+1)$ correspond to adjacent elements of $\rho_i$, which is itself a linear extension of $(Q_i,\tau_i)$.  Thus $\rho(k)$ and $\rho(k+1)$ respect the ordering in $(Q,\tau)$.  Now consider the case when the $\tau$-labels
$\rho(k)$ and $\rho(k+1)$ come from different $Q_i$.  This is the case for $k=1,2$ in our example.  Because of the construction of $(P,\om)$ in Figure~\ref{fig:addingposets}(a), $\pi$ will use up all the $\om$-labels from $P_4$ and $P_5$ before moving on to the $\om$-labels from $P_1$, $P_2$ and $P_3$.  By the definition of $\rho$, the same will be true for $\rho$ with the $Q_i$ in place of $P_i$, and $\tau$ in place of $\om$.   Therefore, when the $\tau$-labels $\rho(k)$ and $\rho(k+1)$ come from different $Q_i$, $\rho(k)$ cannot appear above $\rho(k+1)$ in $(Q,\tau)$.  Thus $\rho(k)$ and $\rho(k+1)$ again respect the ordering in $(Q,\tau)$, as required.

Because $\phi$ is defined in terms of the bijections from  $\mathcal{L}(P_i,\om_i)$ to $\mathcal{L}(Q_i, \tau_i)$, $\phi$ is itself a bijection.  Finally, we show that $\phi$ is descent-preserving.  Consider adjacent letters $\pi(k)$ and $\pi(k+1)$ of $\pi$.  If $\pi(k)$ and $\pi(k+1)$ are $\om$-labels from the same $P_i$ then, since the map from $\mathcal{L}(P_i,\om_i)$ to $\mathcal{L}(Q_i, \tau_i)$ is descent-preserving, position $k$ will be a descent in $\pi$ if and only if it is a descent in $\rho$.  This is the case for $k=3,4$ in our running example.  If $\pi(k)$ and $\pi(k+1)$ are $\om$-labels from $P_i$ and $P_j$ with $i\neq j$, then whether or not $k$ is a descent depends only on whether or not $\om|_{P_i}$ is defined before or after $\om|_{P_j}$ in \eqref{equ:omdef}.  Since the orderings of the $P_i$ in the definition of $\om$ is the same as the orderings of the $Q_i$ in the definition of $\tau$, $k$ will be a descent in $\pi$ if and only if it is a descent in $\rho$, as is the case for $k=1,2$ in our example.  
\end{proof}

\begin{remark}\label{rem:disjoint}
In the case when $P_i$ and $Q_i$ are empty for $i=3,4,5$, Theorem~\ref{thm:addingposets} shows that disjoint union of labeled posets preserves $\kpo$-equality.  This result also follows from Proposition~\ref{pro:product} which, in addition, tells us what $\genx{\du{P_1}{\om_1}{P_2}{\om_2}}$ equals.
\end{remark}

Along with the ways of combining posets of Theorem~\ref{thm:addingposets}, we can consider five additional ways of combining posets $P_1$ and $P_2$ in the case when both posets have unique minimal and/or unique maximal elements.  We wish to show that these combinations also preserve $\kpo$-equality.  The five ways are defined by Figure~\ref{fig:fiveways} and its caption, where we also set the notation for the five operations.  A node at the top (resp.\ bottom) of a labeled poset denotes that this poset has a unique maximal (resp.\ minimal) element.  In each case, the new labeled posets inherit strict and weak edges within $P_1$ and $P_2$ from $\om_1$ and $\om_2$ respectively.

\begin{remark}
A word of warning is in order here.  In certain situations, the strict and weak edges of $\nenw{P}{\om_1}{P_2}{\om_2}$ or $\NeNw{P}{\om_1}{P_2}{\om_2}$ might not be obtainable from an underlying global labeling $\om$, just like the illegal construction of Figure~\ref{fig:addingposets}.
For a specific example, consider $\NeNw{P}{\om_1}{P_2}{\om_2}$ in the case when both $P_1$ and $P_2$ are naturally labeled with at least one edge.  
However, this complication does not affect the validity of the proof of Theorem~\ref{thm:combinations} in the case when $\nenw{P}{\om_1}{P_2}{\om_2}$ or $\NeNw{P}{\om_1}{P_2}{\om_2}$ are valid labeled posets. 

Posets $P$ with an assignment of strict and weak edges that need not come from an underlying labeling are called \emph{oriented posets} in \cite{McN06}.   There are obvious analogues in the oriented poset case for the definition of $(P,\om)$-partitions in terms of strict and weak edges, for Definition~\ref{def:kpo}, and for the star and bar operations.  Notably, Proposition~\ref{pro:fourfold} carries through for oriented posets but Theorem~\ref{thm:kexpansion} does not.  Importantly, our proofs of Lemma~\ref{lem:addcomparability} and Theorem~\ref{thm:combinations} work equally well for oriented posets, so we choose not to include in the hypotheses that  $\nenw{P}{\om_1}{P_2}{\om_2}$ and $\NeNw{P}{\om_1}{P_2}{\om_2}$ be labeled posets.  See \cite{LaPy07,LaPy08,McN06} for more information on oriented posets.
\end{remark}

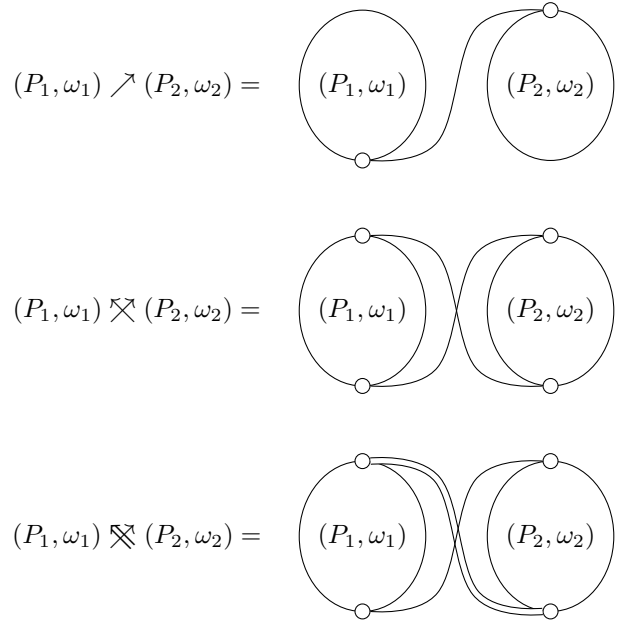
\begin{figure}[htbp]
\begin{center}
\begin{tikzpicture}[scale=1.0]
\begin{scope}
\begin{scope}
\draw (-3,2) node {$\neast{P_1}{\om_1}{P_2}{\om_2}=$};
\draw (0,2) node[ellipse,minimum height=2cm, minimum width=1.5cm,draw,inner sep=0pt] (P11) {$(P_1,\om_1)$};
\draw (0,1) node[circle, inner sep=2pt,fill=white,draw] (p11) {};
\end{scope}
\begin{scope}[xshift=2.5cm]
\draw (0,2) node[ellipse,minimum height=2cm, minimum width=1.5cm,draw,inner sep=0pt] (P21) {$(P_2,\om_2)$};
\draw (0,3) node[circle,inner sep=2pt,fill=white,draw] (p21) {};
\end{scope}
\draw plot[smooth, tension=0.65] coordinates{(p11.east) (1,1.25) (1.5,2.75) (p21.west)};
\end{scope}
\begin{scope}[yshift=-3cm]
\begin{scope}
\draw (-3,2) node {$\nenw{P_1}{\om_1}{P_2}{\om_2}=$};
\draw (0,2) node[ellipse,minimum height=2cm, minimum width=1.5cm,draw,inner sep=0pt] (P11) {$(P_1,\om_1)$};
\draw (0,1) node[circle,inner sep=2pt,fill=white,draw] (p11) {};
\draw (0,3) node[circle,inner sep=2pt,fill=white,draw] (p12) {};
\end{scope}
\begin{scope}[xshift=2.5cm]
\draw (0,2) node[ellipse,minimum height=2cm, minimum width=1.5cm,draw,inner sep=0pt] (P21) {$(P_2,\om_2)$};
\draw (0,1) node[circle,inner sep=2pt,fill=white,draw] (p22) {};
\draw (0,3) node[circle,inner sep=2pt,fill=white,draw] (p21) {};
\end{scope}
\draw plot[smooth, tension=0.65] coordinates{(p11.east) (1,1.25) (1.5,2.75) (p21.west)};
\draw plot[smooth, tension=0.65] coordinates{(p12.east) (1,2.75) (1.5,1.25) (p22.west)};
\end{scope}
\begin{scope}[yshift=-6cm]
\begin{scope}
\draw (-3,2) node {$\neNw{P_1}{\om_1}{P_2}{\om_2}=$};
\draw (0,2) node[ellipse,minimum height=2cm, minimum width=1.5cm,draw,inner sep=0pt] (P11) {$(P_1,\om_1)$};
\draw (0,1) node[circle,inner sep=2pt,fill=white,draw] (p11) {};
\draw (0,3) node[circle,inner sep=2pt,fill=white,draw] (p12) {};
\end{scope}
\begin{scope}[xshift=2.5cm]
\draw (0,2) node[ellipse,minimum height=2cm, minimum width=1.5cm,draw,inner sep=0pt] (P21) {$(P_2,\om_2)$};
\draw (0,1) node[circle,inner sep=2pt,fill=white,draw] (p22) {};
\draw (0,3) node[circle,inner sep=2pt,fill=white,draw] (p21) {};
\end{scope}
\draw[double distance=2pt] plot[smooth, tension=0.65] coordinates{(p12.east) (1,2.75) (1.5,1.25) 
(p22.west)};
\draw plot[smooth, tension=0.65] coordinates{(p11.east) (1,1.25) (1.5,2.75) (p21.west)};
\end{scope}
\end{tikzpicture}
\caption{Three operations on labeled posets $(P_1,\om_1)$ and $(P_2,\om_2)$.  
In addition, $\Ne{P}{\om_1}{P_2}{\om_2}$ is obtained by replacing the displayed weak edge  in the top figure with a strict edge, and $\NeNw{P}{\om_1}{P_2}{\om_2}$ is obtained from the middle figure by replacing both displayed weak edges with strict edges.}
\label{fig:fiveways}
\end{center}
\end{figure}

We need a preliminary observation, Lemma~\ref{lem:addcomparability}, which appears in \cite[Proof of Theorem~1]{Ges84} and in various contexts elsewhere in the literature.  First, we need to set some notation along the same lines as that of Figure~\ref{fig:fiveways}.  We are already using $+$ to denote disjoint union of labeled posets $(P_1,\om_1)$ and $(P_2,\om_2)$.  Ordinal sum of posets $P_1$ and $P_2$ is typically denoted $P_1 \oplus P_2$, and is defined by $x \leq_{P_1 \oplus P_2} y$ if (a) $x,y \in P_1$ and $x \leq_{P_1} y$, or (b) $x,y \in P_2$ and $x \leq_{P_2} y$, or (c) $x \in P_1$ and $y \in P_2$.  In terms of Hasse diagrams, $P_1$ appears below $P_2$ with an edge $e$ going from each maximal element of $P_1$ to each minimal element of $P_2$.    In our case, there will just be a single edge $e$ since $P_1$ (resp.\ $P_2$) will have a unique maximal (resp.\ minimal) element.  In the setting of labeled posets $(P_1,\om_1)$ and $(P_2,\om_2)$, we will need two different symbols depending on whether $e$ is strict or weak.  If $e$ is strict, we will write the resulting labeled poset as $\Up{P_1}{\om_1}{P_2}{\om_2}$, which also inherits strict and weak edges within $P_1$ and $P_2$ from $\om_1$ and $\om_2$ respectively.  In the case when $P_2$, $P_3$ and $P_4$ are empty, and $P_5$ (resp.\ $P_1$) has a unique maximal (resp.~minimal) element, Figure~\ref{fig:addingposets}(a) shows $\Up{P_5}{\om_5}{P_1}{\om_1}$.   
We will similarly use $\up{P_1}{\om_1}{P_2}{\om_2}$ when $e$ is weak.  %The poset in the middle of Figure~\ref{fig:disjsum} depicts $\Up{P_2}{\om_2}{P_1}{\om_1}$.

Following \cite{Ges84, StaThesis, ec1,ec2}, we will let $\mathcal{A}(P,\om)$ denote the set of $(P,\om)$-partitions for a labeled poset $(P,\om)$.  Equations~\eqref{equ:disjoint} and \eqref{equ:disjsum} below can be represented diagrammatically as in Figure~\ref{fig:disjsum}.  

\begin{figure}[htbp]
\begin{center}
\begin{tikzpicture}[scale=0.9]
\begin{scope}[xshift=-0.3cm]
\draw (0,2) node[ellipse,minimum height=2cm, minimum width=1.5cm,draw,inner sep=0pt] (P11) {$(P_1,\om_1)$};
\draw (0,0.94) node[ellipse, inner sep=1pt,fill=white,draw] (p11) {$p_1$};
\end{scope}
\begin{scope}[xshift=2cm]
\draw (0,2) node[ellipse,minimum height=2cm, minimum width=1.5cm,draw,inner sep=0pt] (P21) {$(P_2,\om_2)$};
\draw (0,3.06) node[ellipse,inner sep=1pt,fill=white,draw] (p21) {$p_2$};
\end{scope}
\draw (3.5,2) node {=};
\begin{scope}[xshift=5cm,yshift=-1.56cm]
\begin{scope}[yshift=3.12cm]
\draw (0,2) node[ellipse,minimum height=2cm, minimum width=1.5cm,draw,inner sep=0pt] (P11) {$(P_1,\om_1)$};
\draw (0,0.94) node[ellipse,inner sep=1pt,fill=white,draw] (p11) {$p_1$};
\end{scope}
\begin{scope}
\draw (0,2) node[ellipse,minimum height=2cm, minimum width=1.5cm,draw,inner sep=0pt] (P21) {$(P_2,\om_2)$};
\draw (0,3.06) node[ellipse,inner sep=1pt,fill=white,draw] (p21) {$p_2$};
\end{scope}
\draw[double distance=2pt]  (p11) -- (p21);
\end{scope}
\draw (6.5,2) node {+};
\begin{scope}[xshift=8cm]
\begin{scope}
\draw (0,2) node[ellipse,minimum height=2cm, minimum width=1.5cm,draw,inner sep=0pt] (P11) {$(P_1,\om_1)$};
\draw (0,0.94) node[ellipse,inner sep=1pt,fill=white,draw] (p11) {$p_1$};
\end{scope}
\begin{scope}[xshift=2.5cm]
\draw (0,2) node[ellipse,minimum height=2cm, minimum width=1.5cm,draw,inner sep=0pt] (P21) {$(P_2,\om_2)$};
\draw (0,3.06) node[ellipse,inner sep=1pt,fill=white,draw] (p21) {$p_2$};
\end{scope}
\draw plot[smooth, tension=0.65] coordinates{(p11.east) (1,1.25) (1.5,2.75) (p21.west)};
\end{scope}
\end{tikzpicture}
\caption{Equations \eqref{equ:disjoint} and \eqref{equ:disjsum} diagrammatically.  Note that $p_1$ and $p_2$ denote particular elements of $P_1$ and $P_2$, rather than $\om_1$ and $\om_2$ labels.}
\label{fig:disjsum}
\end{center}
\end{figure}
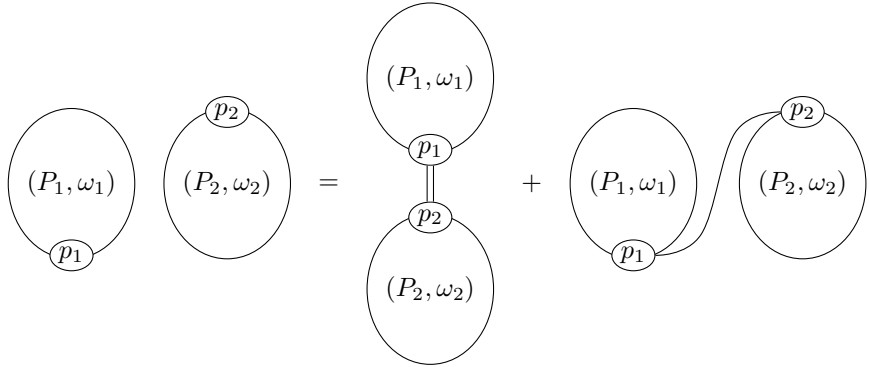

\begin{lemma}\label{lem:addcomparability}
Suppose a labeled poset $(P_1,\om_1)$ contains a unique minimal element $p_1$, and a labeled poset $(P_2,\om_2)$ contains a unique maximal element $p_2$.  
Then we have the disjoint union
\begin{equation}\label{equ:disjoint}
\mathcal{A}(\du{P_1}{\om_1}{P_2}{\om_2}) = \mathcal{A}(\Up{P_2}{\om_2}{P_1}{\om_1})\ \sqcup\ \mathcal{A}(\neast{P_1}{\om_1}{P_2}{\om_2}).
\end{equation}
Consequently,
\begin{equation}\label{equ:disjsum}
\genx{\du{P_1}{\om_1}{P_2}{\om_2}} = \genx{\Up{P_2}{\om_2}{P_1}{\om_1}} + \genx{\neast{P_1}{\om_1}{P_2}{\om_2}}\,.
\end{equation}
Similarly,
\[
\mathcal{A}(\du{P_1}{\om_1}{P_2}{\om_2}) = \mathcal{A}(\up{P_2}{\om_2}{P_1}{\om_1})\ \sqcup\ \mathcal{A}(\Ne{P_1}{\om_1}{P_2}{\om_2}),
\]
and
\begin{equation}\label{equ:disjsum2}
\genx{\du{P_1}{\om_1}{P_2}{\om_2}} = \genx{\up{P_2}{\om_2}{P_1}{\om_1}} + \genx{\Ne{P_1}{\om_1}{P_2}{\om_2}}\,.
\end{equation}
\end{lemma}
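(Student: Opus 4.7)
The plan is to establish \eqref{equ:disjoint} by a direct case analysis on the values of any $(P,\om)$-partition $f$ at the distinguished elements $p_1$ and $p_2$, and then to derive the generating function identities by summing monomials. Since the disjoint union $\du{P_1}{\om_1}{P_2}{\om_2}$ imposes no order relations between $P_1$ and $P_2$, an element of $\mathcal{A}(\du{P_1}{\om_1}{P_2}{\om_2})$ is exactly a pair consisting of a $(P_1,\om_1)$-partition and a $(P_2,\om_2)$-partition, and the values $f(p_1)$ and $f(p_2)$ are unconstrained relative to each other. Trichotomy then forces exactly one of $f(p_1) \leq f(p_2)$ or $f(p_2) < f(p_1)$ to hold.

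The key verification is that, because $p_1$ is the \emph{unique} minimum of $P_1$ and $p_2$ is the \emph{unique} maximum of $P_2$, the construction $\neast{P_1}{\om_1}{P_2}{\om_2}$ adds to the disjoint union exactly the single covering relation $p_1 \pcovered p_2$, as a weak edge, with no further relations produced by transitive closure: anything in $P_1$ below $p_1$ must equal $p_1$, and anything in $P_2$ above $p_2$ must equal $p_2$. Consequently, a $(\neast{P_1}{\om_1}{P_2}{\om_2})$-partition is precisely a pair of partitions on $(P_1,\om_1)$ and $(P_2,\om_2)$ subject to the single extra condition $f(p_1) \leq f(p_2)$. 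By the same reasoning, a $(\Up{P_2}{\om_2}{P_1}{\om_1})$-partition is a pair of partitions subject to $f(p_2) < f(p_1)$. Since these two conditions are complementary, \eqref{equ:disjoint} follows.

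For the generating function identity \eqref{equ:disjsum}, I would simply sum the weight $x_1^{|f^{-1}(1)|} x_2^{|f^{-1}(2)|} \cdots$ over each side of \eqref{equ:disjoint}, which yields the identity at once from Definition~\ref{def:kpo}. The second pair of assertions, involving $\up{P_2}{\om_2}{P_1}{\om_1}$ and $\Ne{P_1}{\om_1}{P_2}{\om_2}$, is proved by the same argument with strict and weak interchanged: the case split is now between $f(p_1) < f(p_2)$ and $f(p_2) \leq f(p_1)$, again complementary conditions on a pair of $(P_i,\om_i)$-partitions.

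There is no substantive obstacle to overcome; the only point requiring care is ensuring the uniqueness hypotheses on $p_1$ and $p_2$ are invoked precisely where they are needed, namely to rule out any spurious transitively-implied order relations when the single edge between $p_1$ and $p_2$ is attached. Without these hypotheses, adding such an edge could impose further constraints between other elements of $P_1$ and $P_2$, and the clean two-way split into complementary cases would break down.
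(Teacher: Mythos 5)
Your proof is correct and follows essentially the same route as the paper: split the $(\du{P_1}{\om_1}{P_2}{\om_2})$-partitions according to the complementary conditions $f(p_2) < f(p_1)$ versus $f(p_1) \leq f(p_2)$ and identify the two pieces with $\mathcal{A}(\Up{P_2}{\om_2}{P_1}{\om_1})$ and $\mathcal{A}(\neast{P_1}{\om_1}{P_2}{\om_2})$, then swap strict and weak for the second pair of identities. Your extra care in checking that the uniqueness of $p_1$ and $p_2$ makes the single added edge generate exactly the right set of order relations is a point the paper leaves implicit, and is a worthwhile addition rather than a deviation.
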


\begin{proof}
First observe that $p_1$ and $p_2$ are incomparable in $\du{P_1}{\om_1}{P_2}{\om_2}$.  Thus for a $(\du{P_1}{\om_1}{P_2}{\om_2})$-partition $\pop$, either $\pop(p_2) < \pop(p_1)$ or $\pop(p_2) \geq \pop(p_1)$.  The case $\pop(p_2) < \pop(p_1)$ is equivalent to $\pop \in \mathcal{A}(\Up{P_2}{\om_2}{P_1}{\om_1})$.  Similarly, the case $\pop(p_2) \geq \pop(p_1)$ corresponds to $\mathcal{A}(\neast{P_1}{\om_1}{P_2}{\om_2})$, implying \eqref{equ:disjoint}.   Definition~\ref{def:kpo} then gives \eqref{equ:disjsum}.  The remaining equations follow by considering instead the fact that either $\pop(p_2) \leq \pop(p_1)$ or $\pop(p_2) > \pop(p_1)$.  
\end{proof}

We are now ready to show that the five operations of Figure~\ref{fig:fiveways} preserve $\kpo$-equality.

\begin{theorem}\label{thm:combinations}  Suppose we have labeled posets $(P_1, \om_1)$, $(P_2,\om_2)$, $(Q_1, \tau_1)$ and $(Q_2, \tau_2)$, each of which has both a unique minimal and a unique maximal element.  If $(P_1,\om_1) \sim (Q_1,\om_1)$ and $(P_2,\om_2) \sim
(Q_2, \tau_2)$ then we have:
\begin{enumerate}
\item $\neast{P_1}{\om_1}{P_2}{\om_2} \ \sim\ \neast{Q_1}{\tau_1}{Q_2}{\tau_2}$;
\item $\Ne{P_1}{\om_1}{P_2}{\om_2}\ \sim\ \Ne{Q_1}{\tau_1}{Q_2}{\tau_2}$;
\item $\nenw{P_1}{\om_1}{P_2}{\om_2}\ \sim\ \nenw{Q_1}{\tau_1}{Q_2}{\tau_2}$;
\item $\NeNw{P_1}{\om_1}{P_2}{\om_2}\ \sim\ \NeNw{Q_1}{\tau_1}{Q_2}{\tau_2}$;
\item $\neNw{P_1}{\om_1}{P_2}{\om_2}\ \sim\ \neNw{Q_1}{\tau_1}{Q_2}{\tau_2}$.
\end{enumerate}
\end{theorem}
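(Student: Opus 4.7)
The strategy is to express $\genx{\cdot}$ of the combined labeled poset, for each of the five operations, as a $\mathbb{Z}$-linear combination of quantities already known to be $\sim$-invariant in the summands: namely the product $\gen{P_1,\om_1}\gen{P_2,\om_2}$, which is invariant by Proposition~\ref{pro:product}, and the strict and weak ordinal sums $\Up{\cdot}{\cdot}{\cdot}{\cdot}$, $\up{\cdot}{\cdot}{\cdot}{\cdot}$ of the two pieces taken in either order. The latter invariance is the key preliminary observation and follows from Theorem~\ref{thm:addingposets}: take the bottom summand of the ordinal sum to play the role of the theorem's $P_5$ (for a strict sum) or $P_4$ (for a weak sum), take the top summand to play the role of the theorem's $P_1$, and let the theorem's other three posets be empty. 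The uniqueness of the top summand's min and the bottom summand's max collapses the theorem's bipartite connection to a single edge of the required strictness, producing exactly the desired ordinal sum.

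Parts (a) and (b) are then immediate from Lemma~\ref{lem:addcomparability}: rearranging \eqref{equ:disjsum} and \eqref{equ:disjsum2} gives
\begin{align*}
\genx{\neast{P_1}{\om_1}{P_2}{\om_2}} &= \gen{P_1,\om_1}\gen{P_2,\om_2} - \genx{\Up{P_2}{\om_2}{P_1}{\om_1}},\\
\genx{\Ne{P_1}{\om_1}{P_2}{\om_2}} &= \gen{P_1,\om_1}\gen{P_2,\om_2} - \genx{\up{P_2}{\om_2}{P_1}{\om_1}},
\end{align*}
and every summand on the right is $\sim$-invariant by the above.

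For (c) and (e), I extend Lemma~\ref{lem:addcomparability} by partitioning $\mathcal{A}(\du{P_1}{\om_1}{P_2}{\om_2})$ according to \emph{both} comparisons $f(\min P_1)$ vs.\ $f(\max P_2)$ and $f(\min P_2)$ vs.\ $f(\max P_1)$.  In (c), the three conditions ``$f(\max P_2) < f(\min P_1)$'', ``$f(\max P_1) < f(\min P_2)$'', and ``neither'' really do partition $\mathcal{A}(\du{P_1}{\om_1}{P_2}{\om_2})$ (pairwise disjointness is immediate from a short chain of inequalities) and correspond respectively to $\mathcal{A}(\Up{P_2}{\om_2}{P_1}{\om_1})$, $\mathcal{A}(\Up{P_1}{\om_1}{P_2}{\om_2})$, and $\mathcal{A}(\nenw{P_1}{\om_1}{P_2}{\om_2})$. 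Solving the resulting generating-function identity for $\genx{\nenw{P_1}{\om_1}{P_2}{\om_2}}$ finishes the case, since the other three terms are $\sim$-invariant.  Part (e) follows from the same three-way decomposition, with one of the two strict ordinal sums replaced by its weak counterpart to match the mixed-strictness edges of $\neNw$; disjointness still holds by the same chain-of-inequalities argument.

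The main obstacle is part (d), $\NeNw$, where the analogous three-way decomposition fails to be disjoint: the two weak-ordinal-sum conditions $f(\max P_1) \leq f(\min P_2)$ and $f(\max P_2) \leq f(\min P_1)$ are simultaneously satisfied precisely by those $(\du{P_1}{\om_1}{P_2}{\om_2})$-partitions $f$ that are forced to be constant on all of $P_1 \cup P_2$. Inclusion-exclusion then yields
\[
\genx{\NeNw{P_1}{\om_1}{P_2}{\om_2}} = \gen{P_1,\om_1}\gen{P_2,\om_2} - \genx{\up{P_2}{\om_2}{P_1}{\om_1}} - \genx{\up{P_1}{\om_1}{P_2}{\om_2}} + E,
\]
where $E$ is the generating function of such constant partitions. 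These partitions exist exactly when both $(P_i,\om_i)$ are naturally labeled, in which case $E = M_{(|P_1|+|P_2|)}$; otherwise $E = 0$. Since $\sim$-equivalence preserves both cardinality (Proposition~\ref{pro:cardinality}) and the property of being naturally labeled (the result immediately preceding Proposition~\ref{pro:product}), $E$ is itself $\sim$-invariant, and the theorem follows. The same decompositions go through verbatim in the oriented-poset setting flagged in the remark, so no extra work is required when $\nenw$ or $\NeNw$ fails to be a genuine labeled poset.
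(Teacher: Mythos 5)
Your proposal is correct, and for parts (c)--(e) it takes a genuinely different route from the paper. Parts (a) and (b) coincide with the paper's argument: both rearrange \eqref{equ:disjsum} and \eqref{equ:disjsum2} after noting, via Theorem~\ref{thm:addingposets} with three of the five blocks empty, that the strict and weak ordinal sums preserve $\kpo$-equality. For (c) and (e) the paper proceeds in two binary steps --- it first splits $\mathcal{A}(\neast{P_1}{\om_1}{P_2}{\om_2})$ according to the comparison of $f$ at $\max P_1$ and $\min P_2$, obtaining $\genx{\neast{P_1}{\om_1}{P_2}{\om_2}} = \genx{\nenw{P_1}{\om_1}{P_2}{\om_2}} + \genx{\Up{P_1}{\om_1}{P_2}{\om_2}}$ --- whereas you split $\mathcal{A}(\du{P_1}{\om_1}{P_2}{\om_2})$ directly into three disjoint pieces; these are the same decomposition composed in one step versus two, and your verification of pairwise disjointness by chaining inequalities is exactly the point that needs checking. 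The real divergence is (d): the paper deduces it from (c) by applying the bar involution of Proposition~\ref{pro:fourfold} to $\switch{P_i,\om_i}$, which sidesteps the failure of disjointness entirely, while you confront that failure head-on with inclusion--exclusion and identify the overlap as the constant partitions, with generating function $E = M_{(|P_1|+|P_2|)}$ when both summands are naturally labeled and $E=0$ otherwise. Your observation that $E$ is itself a $\sim$-invariant (cardinality plus preservation of natural labeling under $\sim$) is what makes this work, and it is a correct, slightly more computational substitute for the paper's duality trick; it has the minor virtue of producing an explicit formula for $\genx{\NeNw{P_1}{\om_1}{P_2}{\om_2}}$ and of treating all five parts by one uniform mechanism, at the cost of an extra case analysis that the bar operation renders unnecessary.
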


\begin{proof}
As noted in Remark~\ref{rem:disjoint}, disjoint union preserves $\kpo$-equality, i.e., 
\begin{equation}\label{equ:du}
\du{P_1}{\om_1}{P_2}{\om_2}\  \sim\  \du{Q_1}{\tau_1}{Q_2}{\tau_2}.
\end{equation}
By Theorem~\ref{thm:addingposets} with $P_2$, $P_3$ and $P_4$, empty, we know that 
the $\Uparrow$ operation preserves $\kpo$-equality, and so 
\begin{equation}\label{equ:Up}
\Up{P_2}{\om_2}{P_1}{\om_1}\  \sim\  \Up{Q_2}{\tau_2}{Q_1}{\tau_1}.
\end{equation}
Using \eqref{equ:du} and \eqref{equ:Up} in \eqref{equ:disjsum}, we deduce that 
\[
\neast{P_1}{\om_1}{P_2}{\om_2}\  \sim\  \neast{Q_1}{\tau_1}{Q_2}{\tau_2}
\]
and we have proved (a).  Similarly, (b) follows from \eqref{equ:disjsum2}.

For (c), consider $\neast{P_1}{\om_1}{P_2}{\om_2}$ as pictured on the right in Figure~\ref{fig:disjsum}, with $p_1$ and $p_2$ denoting the same elements as in the figure.  Let $p_1'$ denote the maximal element of $P_1$ and let $p_2'$ denote the minimal element of $P_2$.  Since $p_1'$ and $p_2'$ are incomparable, we can apply the ideas of the proof of Lemma~\ref{lem:addcomparability}: any $(\neast{P_1}{\om_1}{P_2}{\om_2})$-partition $\pop$ satisfies $\pop(p_2') \leq \pop(p_1')$ or $\pop(p_2') > \pop(p_1')$.  The former case is equivalent to $\pop$ being a $(\nenw{P_1}{\om_1}{P_2}{\om_2})$-partition.  The latter case is equivalent to $\pop$ being a $(\Up{P_1}{\om_1}{P_2}{\om_2})$-partition since the weak edge from $p_1$ to $p_2$ will be made redundant by the relations 
\[
\pop(p_1) \leq \pop(p_1') < \pop(p_2') \leq \pop(p_2).
\]
Putting this all together, we have
\[
\genx{\neast{P_1}{\om_1}{P_2}{\om_2}} = \genx{\nenw{P_1}{\om_1}{P_2}{\om_2}} + \genx{\Up{P_1}{\om_1}{P_2}{\om_2}}.
\]
Since the $\nearrow$ and $\Uparrow$ operations both preserve $\kpo$-equality, we conclude that the $\nenwarrows$ operation does too, implying (c).  To prove (d), we use (c), and exploit the bar operation of Proposition~\ref{pro:fourfold} by applying it to the equivalence 
\[
\switchnenw{P_1}{\om_1}{P_2}{\om_2}\ \sim\ \switchnenw{Q_1}{\tau_1}{Q_2}{\tau_2}.
\]

The proof of (e) is just like the proof of (c) except that we use the fact that any 
$(\neast{P_1}{\om_1}{P_2}{\om_2})$-partition $\pop$ satisfies $\pop(p_2') < \pop(p_1')$ or $\pop(p_2') \geq \pop(p_1')$.
\end{proof}

\begin{remarks} \
\begin{enumerate}
\item
From the proof, we see that (a) and (b) do not require the full hypotheses of the theorem, in that $P_1$ and $Q_1$ (resp.\ $P_2$ and $Q_2$) need not have unique maximal (resp.~minimal) elements.    
\item 
Theorem~5.9 does not give an exhaustive list of the ways in which two labeled posets can be combined in a manner that preserves $\kpo$-equality.  For example, we could combine $(P_1,\om_1)$ and $(P_2,\om_2)$ by drawing a weak edge from the unique minimum element of $P_1$ up to the unique \emph{minimum} element of $P_2$.  That this operation preserves $\kpo$-equality could be shown using the same method as in Theorem~\ref{thm:addingposets}.  The key requirement is a global labeling $\om$ that preserves the strict and weak edges, and the labeling $\om$ we defined for $\du{P_1}{\om_1}{P_2}{\om_2}$ (just before Proposition~\ref{pro:product}) would work in this case.  In fact, parts (a), (b) and (e) of Theorem~\ref{thm:combinations} could also have been proved using the technique of Theorem~\ref{thm:addingposets}.
\end{enumerate}
\end{remarks}

%%%%%%%%%%%%%%%%%%%%%%%%%

\section{Posets with few linear extensions}\label{sec:fewlinext}

In this section, we will classify all $\kpo$-equalities for posets with at most three linear extensions.  As we will see, all the equalities are built in a natural way from equalities among skew Schur functions.  

As we know from Proposition~\ref{pro:linexts}, if $(P,\om) \sim (Q,\tau)$, then  $|\mathcal{L}(P,\om)| = |\mathcal{L}(Q,\tau)|$.
So first, let us suppose that $(P,\om) \sim (Q,\tau)$ with $|\mathcal{L}(P,\om)|=1$.  Thus $P$ and $Q$ must both be chains, and Proposition~\ref{pro:jump} shows that $(P,\om) \cong(Q,\tau)$.

Next suppose that $(P,\om) \sim (Q,\tau)$ with $|\mathcal{L}(P,\om)|=2$.  We have seen one equivalence of this type in \eqref{equ:skewequiv} and Figure~\ref{fig:stathm}.  As we observed, this is the skew Schur equality $s_{21} = s_{22/1}$ translated to our labeled poset setting, and we will refer to it as the \emph{\stwoone\ equivalence.}  By repeatedly applying Theorem~\ref{thm:addingposets} in the case when just two of the $P_i$ are nonempty, we can build other equivalences with two linear extensions from the \stwoone\ equivalence.  For example, Figure~\ref{fig:twoexts} shows the equivalence
\begin{equation}\label{equ:chainext}
(\single \uparrow \sd{21} \uparrow \single \Uparrow \single)\ \ \sim\ \ (\single \uparrow \sd{22/1} \uparrow \single \Uparrow \single),
\end{equation}
where $\single$ denotes the labeled poset with one element, and $\sd{21}$ and $\sd{22/1}$ denote the labeled posets of Figure~\ref{fig:stathm}(a) and (b) respectively.  In Theorem~\ref{thm:fewlinexts} below, we will show that all $\kpo$-equalities with two linear extensions arise in this fashion from the \stwoone\ equivalence.  

\begin{figure}[htbp]
\begin{center}
\begin{tikzpicture}[scale=0.8]
\begin{scope}
\tikzstyle{every node}=[draw,shape=circle,inner sep=2pt]; 
\draw[line width=0pt] (1,0) node (a1) {}; 
\draw[line width=0pt] (0,1) node (a2) {}; 
\draw[very thick] (2,1) node (a3) {};
\draw[very thin] (1,-1) node (b1) {};
\draw (1,2) node (b2) {};
\draw (1,3) node (b3) {};
\draw[very thick] (a1) -- (a3);
\draw (b1) -- (a1)
(a2) -- (b2) -- (a3);
\draw[double distance=2pt, very thick](a1) -- (a2);
\draw[double distance=2pt] (b2) -- (b3);
\draw[very thick] (a1) node (a1cover) {};
\draw[very thick] (a2) node (a2cover) {};
\end{scope}
\begin{scope}[xshift=3.5cm,yshift=1cm]
\draw (0,0) node {$\sim$};
\end{scope}
\begin{scope}[xshift=5cm]
\begin{scope}
\tikzstyle{every node}=[draw,shape=circle, inner sep=2pt]; 
\draw[very thick] (0,0) node (a1) {}; 
\draw[line width=0pt] (2,0) node (a2) {}; 
\draw[line width=0pt] (1,1) node (a3) {};
\draw (1,-1) node (b1) {};
\draw (1,2) node (b2) {};
\draw (1,3) node (b3) {};
\draw[very thick] (a1) -- (a3);
\draw (a1) -- (b1) -- (a2)
(a3) -- (b2);
\draw[double distance=2pt, very thick](a2) -- (a3);
\draw[double distance=2pt] (b2) -- (b3);
\draw[very thick] (a2) node (a1cover) {};
\draw[very thick] (a3) node (a2cover) {};
\end{scope}
\end{scope}
\end{tikzpicture}
\caption{An example of a $\kpo$-equality with two linear extensions that is built from the \stwoone\ equivalence.}
\label{fig:twoexts}
\end{center}
\end{figure}
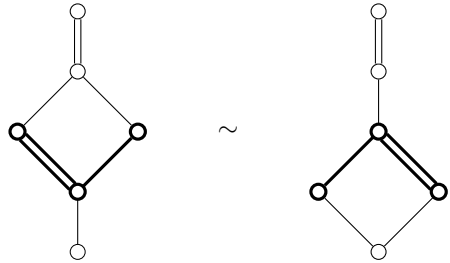

It turns out that the case of three linear extensions is very similar to the case of two linear extensions.  For three linear extensions, the basic building block is the $\sd{211}=\sd{222/11}$ equivalence shown in Figure~\ref{fig:threeexts}, which corresponds to the skew Schur equality $s_{211} = s_{222/11}$; we will refer to it as the \emph{\stwooneone\ equivalence}.

\begin{figure}[htbp]
\begin{center}
\begin{tikzpicture}[scale=0.8]
\begin{scope}
\tikzstyle{every node}=[draw,shape=circle, inner sep=2pt]; 
\draw (1,0) node (a1) {}; 
\draw (0,1) node (a2) {}; 
\draw (2,1) node (a3) {};
\draw (-1,2) node (a4) {};
\draw[double distance=2pt](a1) -- (a2) -- (a4);
\draw (a1) -- (a3);
\end{scope}
\begin{scope}[xshift=3.5cm, yshift=1cm]
\draw (0,0) node {$\sim$};
\end{scope}
\begin{scope}[xshift=5cm,yshift=1cm]
\begin{scope}
\tikzstyle{every node}=[draw,shape=circle, inner sep=2pt]; 
\draw (0,0) node (a1) {}; 
\draw (2,0) node (a2) {}; 
\draw (1,1) node (a3) {};
\draw (3,-1) node (a4) {};
\draw (a1) -- (a3);
\draw[double distance=2pt](a3) -- (a2) -- (a4);
\end{scope}
\end{scope}
\end{tikzpicture}
\caption{The \stwooneone\ equivalence, which is the basic building block for equivalences with three linear extensions.}
\label{fig:threeexts}
\end{center}
\end{figure}
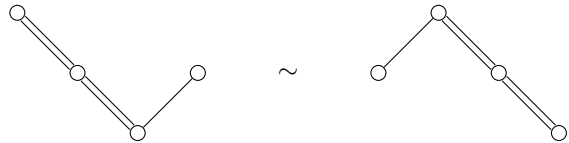

In the case of three linear extensions, Theorem~\ref{thm:addingposets} tells us that we can repeatedly apply any selection of the following operations to the \stwooneone\ equivalence to obtain other pairs of equivalent labeled posets, and it is clear that the new labeled posets will continue to have three linear extensions:
\begin{itemize}
\item add a bottom element to both labeled posets using a strict edge (or edges);
\item add a bottom element to both labeled posets using a weak edge (or edges);
\item add a top element to both labeled posets using a strict edge (or edges);
\item add a top element to both labeled posets using a weak edge (or edges).
\end{itemize}
An application of any number of these operations will be called a \emph{chain extension} of the  \stwooneone\ equivalence, since it has the effect of adding a chain with strict and/or weak edges to the bottom and/or top of the labeled posets in Figure~\ref{fig:threeexts}.  Chain extensions of other equivalences are defined in the same way.  In particular, chain extensions of the \stwoone\ equivalence have \eqref{equ:chainext} and Figure~\ref{fig:twoexts} as an example.

We can now state the main result of this section.

\begin{theorem}\label{thm:fewlinexts}
Suppose that labeled posets $(P,\om)$ and $(Q,\tau)$ satisfy $(P,\om) \sim (Q,\tau)$. 
\begin{enumerate}
\item If $|\mathcal{L}(P,\om)|=2$, then either $(P,\om) \cong (Q,\tau)$ or their equivalence is a chain extension of the \stwoone\ equivalence.  
\item If $|\mathcal{L}(P,\om)|=3$, then either $(P,\om) \cong (Q,\tau)$,  or their equivalence is a chain extension of the \stwooneone\ equivalence, or the equivalence of $\switch{P,\om}$ and $\switch{Q,\tau}$ is a chain extension of the \stwooneone\ equivalence.
\end{enumerate}
\end{theorem}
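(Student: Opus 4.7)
The approach is by induction on $|P|$, combining a structural classification of posets with few linear extensions with the stripping technique of Proposition~\ref{pro:singlejump0}.

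I begin by observing the constrained shape of posets with few linear extensions. A poset with exactly two linear extensions decomposes as a (possibly empty) lower chain $C_1$, a 2-element antichain $\{a,b\}$, and a (possibly empty) upper chain $C_2$, with every element of $C_1$ lying below both $a$ and $b$ and every element of $C_2$ lying above both; any further pair of incomparable elements would force a third linear extension. A poset with exactly three linear extensions has a similar nearly-chain structure, with the ``middle piece'' being a $\twoplusone$-shaped 3-element subposet instead of a 2-element antichain. By Proposition~\ref{pro:linexts}, the same structural description applies to $(Q,\tau)$.

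The inductive step proceeds via Proposition~\ref{pro:singlejump0}. If $(P,\om)$ has a unique jump 0 element then so does $(Q,\tau)$ by the jump-sequence equality of Proposition~\ref{pro:jump}, and we may strip this element from each poset to obtain a smaller equivalence. The bar and star involutions of Proposition~\ref{pro:fourfold} convert weak-bottom or top-stripping situations into the setting where the proposition applies directly. Reattaching the stripped element corresponds to the chain extension operations defined in Section~\ref{sec:suff}, so by the induction hypothesis the original equivalence is either trivial (i.e., $(P,\om)\cong(Q,\tau)$) or a chain extension of the base equivalence of interest (and in part (b), possibly the bar of a chain extension of \stwooneone).

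The main obstacle is the base case: labeled posets admitting no such stripping in any of the four bar/star variants. For part (a), these are the 2-element antichain, the three-element $\sd{21}$ and $\sd{22/1}$ labeled posets of Figure~\ref{fig:stathm}, and a short list of 4-element chain-antichain-chain configurations whose strict/weak pattern is fixed by both bar and star. A direct $F$-expansion computation via Theorem~\ref{thm:kexpansion} verifies that each such 4-element base case admits only trivial equivalences, so the only nontrivial base equivalence at this level is the \stwoone\ equivalence itself. For part (b), the analogous verification must additionally distinguish $\sd{211}$ from its bar, accounting for the third possibility in the statement. In both parts the base-case verification reduces to a finite enumeration of labeled-poset shapes and their descent-composition multisets.
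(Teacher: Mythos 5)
Your proposal follows the same overall strategy as the paper---induction on $|P|$, stripping a unique jump~$0$ element via Proposition~\ref{pro:singlejump0} in one of the four bar/star variants, and a finite computational base case---but it resolves the central alignment difficulty differently, and the difference is worth recording. The paper first proves Lemma~\ref{lem:uniqueminmax} (for $n\geq 5$ the two posets must both have unique minimal elements or both have unique maximal elements) by a descent argument on linear extensions, and then runs an explicit case analysis on $j_0$ and on the edges leaving the minimum. You instead observe that for each of the four variants the number of jump~$0$ elements is a $\kpo$-invariant (Propositions~\ref{pro:jump} and~\ref{pro:fourfold}), so whenever some variant of $(P,\om)$ has a unique jump~$0$ element the same variant of $(Q,\tau)$ does too, and the two posets can be stripped simultaneously; misaligned structures such as ``antichain at the bottom of $P$ but at the top of $Q$'' are excluded automatically because strippability itself is an invariant of $\kpo$. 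This renders Lemma~\ref{lem:uniqueminmax} and the edge case analysis unnecessary, which is a genuine streamlining, at the cost of making the identification of the base cases carry more weight.

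Two points need tightening before this is a complete proof. First, your structural classification (Lemma~\ref{lem:classify3linexts} in the paper) is asserted rather than proved; ``any further pair of incomparable elements would force a third linear extension'' is not literally an argument, and the paper needs Dilworth's theorem plus a four-element subposet analysis to establish the chain-extension structure. That classification is exactly what guarantees that a poset unstrippable in all four variants is small: the bottom can fail in both the identity and bar variants only if there are two minimal elements, or a unique minimum covered by exactly two elements via one strict and one weak edge; dually at the top; hence $n\leq 4$ in part (a) and $n\leq 5$ in part (b). You should state this criterion explicitly---your phrase ``whose strict/weak pattern is fixed by both bar and star'' is not the correct characterization of unstrippability (the two conditions happen to coincide for the four-element diamond but differ in general). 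Second, when reattaching, you should note that the stripped element rejoins \emph{all} minimal (or maximal) elements of the reduced poset by edges of a single type, so the reattachment is literally one of the four chain-extension operations, and that in part (b) the ``bar of a chain extension of the \stwooneone\ equivalence'' alternative is preserved under reattachment. With these details supplied, the argument is correct.
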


The extra clause in (b) doesn't appear in (a) since the posets in the \stwoone\ equivalence are preserved by the bar operation.  

The proofs of (a) and (b) are very similar, so we will prove them simultaneously.  We will need two lemmas about the structure of $(P,\om)$ and $(Q,\tau)$.  The first one will classify all labeled posets $(P,\om)$ with $|\mathcal{L}(P,\om)| \in \{2,3\}$.  For these purposes, we can ignore the labeling $\om$ and just classify all unlabeled posets $P$ with two or three linear extensions, i.e., $|\mathcal{L}(P)|\in \{2,3\}$.  Similar to the definition above of a chain extension of an equivalence, a \emph{chain extension of an unlabeled poset} $P$ is any poset that can be obtained by repeatedly adding top and/or bottom elements to $P$.  The $\mathbf{a}+\mathbf{b}$ poset is the $(a+b)$-element poset consisting of a disjoint union of a chain with $a$ elements and a chain with $b$ elements.   Once we ignore the strictness and weakness of the edges, both posets in Figure~\ref{fig:threeexts} are chain extensions of $\twoplusone$, while Figure~\ref{fig:twoexts} shows chain extensions of $\oneplusone$.  

\begin{lemma}\label{lem:classify3linexts}
For a poset $P$:
\begin{enumerate}
\item $|\mathcal{L}(P)|=2$ if and only if $P$ is a chain extension of $\oneplusone$;
\item $|\mathcal{L}(P)|=3$ if and only if $P$ is a chain extension of $\twoplusone$.  
\end{enumerate}

\end{lemma}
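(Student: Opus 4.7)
The plan is to prove the ``if'' direction first and then the converse by induction on $|P|$, organized by the number of minimal elements of $P$.  The ``if'' direction for both (a) and (b) is immediate: attaching a new bottom (resp.\ top) element below (resp.\ above) everything forces it to occupy the first (resp.\ last) position in every linear extension, so a chain extension preserves the linear-extension count.  Since $|\mathcal{L}(\oneplusone)|=2$ and $|\mathcal{L}(\twoplusone)|=3$, any chain extension of $\oneplusone$ (resp.\ $\twoplusone$) has the required count.

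For the converse, I will use the recursion
\[
|\mathcal{L}(P)| = \sum_{m \text{ minimal}} |\mathcal{L}(P\setminus\{m\})|,
\]
together with the elementary observation that $|\mathcal{L}(P\setminus\{m\})|\geq 2$ whenever $P\setminus\{m\}$ still contains at least two minimal elements.  Hence three or more minimal elements of $P$ force $|\mathcal{L}(P)|\geq 6$, so in both parts $P$ must have exactly one or two minimal elements.  If $P$ has a unique minimal element $m$, then $m$ lies below every other element of $P$, every linear extension begins with $m$, and $|\mathcal{L}(P\setminus\{m\})|=|\mathcal{L}(P)|$; the inductive hypothesis applied to $P\setminus\{m\}$ together with re-attaching $m$ as a bottom element yields the desired chain-extension structure.

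The substantive case is when $P$ has two minimal elements $m_1, m_2$.  For (a), each summand $|\mathcal{L}(P\setminus\{m_i\})|$ equals $1$, so each $P\setminus\{m_i\}$ is a chain; these two chains share the common set $P\setminus\{m_1,m_2\}$, so they induce the same total order on it and both $m_1, m_2$ lie below its minimum, giving $P$ as $\oneplusone$ on $\{m_1,m_2\}$ with a top chain.  For (b), we have (WLOG) $|\mathcal{L}(P\setminus\{m_1\})|=1$ and $|\mathcal{L}(P\setminus\{m_2\})|=2$, so $P\setminus\{m_1\}$ is a chain $m_2 <_P x_2 <_P \cdots <_P x_k$ and, by part (a), $P\setminus\{m_2\}$ is a chain extension of $\oneplusone$.

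I expect the main work to be pinning down the structure of $P\setminus\{m_2\}$ in case (b).  Since $x_2 <_P \cdots <_P x_k$ already forms a chain inside $P\setminus\{m_2\}$ and $m_1$ remains minimal there, the $\oneplusone$ in the decomposition of $P\setminus\{m_2\}$ must consist of $\{m_1, x_{j^*}\}$ for some $j^*$; the bottom chain of that decomposition is forced to be empty by the minimality of $m_1$, and transitivity along $x_2 <_P \cdots <_P x_k$ then forces $j^*=2$ with top chain $x_3, \ldots, x_k$.  Reassembling with $m_2$, we use $m_2 <_P x_2$ together with $m_1 \parallel m_2$ and $m_1 \parallel x_2$ to see that $\{m_1, m_2, x_2\}$ forms $\twoplusone$, with $x_3, \ldots, x_k$ sitting above as a top chain, completing (b).
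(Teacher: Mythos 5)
Your proof is correct, but it takes a genuinely different route from the paper's. The paper argues globally: an antichain of size $3$ would force $|\mathcal{L}(P)|\geq 6$, so $P$ has width $2$; Dilworth's theorem then gives a partition into two chains $C_1, C_2$ with $|C_2|$ minimal, and a short contradiction argument (producing a four-element subposet with two incomparable pairs, hence at least four linear extensions) shows $|C_2|=1$, after which the possible positions of that single element relative to $C_1$ are enumerated directly. You instead run the deletion recursion $|\mathcal{L}(P)|=\sum_{m\ \mathrm{minimal}}|\mathcal{L}(P\setminus\{m\})|$ with induction on $|P|$: three or more minimal elements are ruled out by the recursion, a unique minimal element reduces to a smaller poset, and the two-minimal-element case is resolved by the forced splittings $1+1$ and $1+2$, with part (a) feeding into part (b). Both arguments are complete; yours is more elementary (no Dilworth) and more systematic, and the recursive framework would extend more naturally to classifying posets with four or five linear extensions (which the paper notes is harder), whereas the paper's chain-decomposition argument delivers the global two-chain picture of $P$ in one step. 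The only points worth making explicit in a written version are the base cases of your induction (the direct check that a two-element poset with two linear extensions is $\oneplusone$ and a three-element poset with three is $\twoplusone$, both of which fall out of your two-minimal-element analysis with empty top chain) and the observation that a chain extension of $\oneplusone$ has exactly one incomparable pair, which is what pins down $\{m_1,x_{j^*}\}$ in your analysis of case (b).
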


\begin{proof}
The ``if'' direction is clear.  For the ``only if'' direction, we see that if $P$ has an antichain of size 3 then $|\mathcal{L}(P)| \geq 6$.  Therefore $P$ has width 2 and so, by Dilworth's Theorem, the elements of $P$ can be partitioned into two chains $C_1$ and $C_2$.  Without loss of generality, we choose the partition so that $C_2$ has as few elements as possible.  Since we wish to show that $|C_2|=1$, let $a$ and $b$ be the minimum and maximum elements of $C_2$ respectively, and assume that $a \neq b$. Let $c$ be the minimum element in $C_1$ that is incomparable to $a$; we see that $c$ exists by the minimality of $|C_2|$.  Similarly, let $d$ be the maximal element of $C_1$ that is incomparable to $b$.  Clearly, $c \leq_P d$.  If $c=d$, this contradicts the minimality of $|C_2|$, since we could then partition $P$ in two chains such that the smaller chain only contained $c$.  

Therefore, the subposet $S$ of $P$ formed by $\{a,b,c,d\}$ has the relations $a<_S b$, $c<_S d$ and possibly the relations $c<_S b$ and $a<_S d$.  Importantly, $a$ and $c$ are incomparable, as are $b$ and $d$.  As a result, $S$ has at least four linear extensions, and hence so does $P$, a contradiction.  We conclude that $a=b$ and so $C_2$ has just a single element.  

With $c$ and $d$ defined as above, there are three possibilities.  The first possibility is that $c<_P d$ and there exists an element $z \in C_1$ with $c <_P z <_P  d$.  Then the subposet of $P$ formed by $\{a,c,z,d\}$ would have at least four linear extensions, and hence so would $P$, meaning that we can ignore this case.  The second possibility is that $c \pcovered d$, which gives that $|\mathcal{L}(P)|=3$ and we see that $P$ is indeed a chain extension of $\twoplusone$. 
The final possibility is that $c=d$, which is now possible because $a=b$; we have that $|\mathcal{L}(P)|=2$ and $P$ is indeed a chain extension of $\oneplusone$.  
\end{proof} 

We will prove one more preliminary result before proving Theorem~\ref{thm:fewlinexts}.  

\begin{lemma}\label{lem:uniqueminmax}
Suppose that labeled posets $(P,\om)$ and $(Q,\tau)$ satisfy $(P,\om) \sim (Q,\tau)$ and  
$|\mathcal{L}(P)| \in \{2,3\}$.  If $|P| = |Q| \geq 5$, then $P$ and $Q$ must both have unique minimal elements or both have unique maximal elements.
\end{lemma}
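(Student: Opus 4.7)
The plan is to argue by contradiction via Theorem~\ref{thm:kexpansion}: if the conclusion fails, I will exhibit a position on which all linear extensions of one side must have the same descent status, while two linear extensions of the other side disagree.

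First, I would set up the case reduction.  By Proposition~\ref{pro:linexts}, $|\mathcal{L}(P)| = |\mathcal{L}(Q)|$, so by Lemma~\ref{lem:classify3linexts} both $P$ and $Q$ decompose as $B + A + T$, where $A$ is the ``core'' copy of $\oneplusone$ or $\twoplusone$ (of size $2$ or $3$) and $B, T$ are possibly empty chains below and above it.  Absence of a unique minimum is equivalent to $B = \emptyset$, and similarly for $T$.  Negating the conclusion gives four cases; two of them ($B_P = T_P = \emptyset$ or $B_Q = T_Q = \emptyset$) force the corresponding poset to have at most three elements and are ruled out by $|P|=|Q|\geq 5$, while the other two are interchanged by swapping $P$ with $Q$.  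So it suffices to derive a contradiction in the single case $B_P = \emptyset$ and $T_Q = \emptyset$, that is, $P = A_P + T_P$ and $Q = B_Q + A_Q$.

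Next I would identify the positions on which all linear extensions on each side agree.  Because $T_P$ is a chain, every element of $\mathcal{L}(P,\om)$ ends with the same suffix listing the $\om$-labels of $T_P$ in order; hence all such descent sets coincide on positions $|A_P|+1, \ldots, n-1$.  Symmetrically, every element of $\mathcal{L}(Q,\tau)$ begins with the same prefix listing the $\tau$-labels of $B_Q$, so all such descent sets coincide on positions $1, 2, \ldots, |B_Q|-1$.  The hypothesis $|P| \geq 5$ forces $|B_Q| \geq 2$ in both the $|\mathcal{L}|=2$ case (where $|B_Q| = n-2$) and the $|\mathcal{L}|=3$ case (where $|B_Q| = n-3$), so position~$1$ always lies in the agreed range for $(Q,\tau)$.

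Finally I would extract the contradiction.  Since $\{F_{S,n}\}$ is a basis, Theorem~\ref{thm:kexpansion} forces the multisets $\{\des(\pi):\pi\in \mathcal{L}(P,\om)\}$ and $\{\des(\rho):\rho\in \mathcal{L}(Q,\tau)\}$ to be equal; consequently any position on which all members of one multiset agree must also be agreed across the other.  In particular all linear extensions of $(P,\om)$ must have the same descent status at position~$1$.  I would then check directly that this fails on the $P$ side: for $|\mathcal{L}|=2$, the two linear extensions of $P$ begin $(\om(a),\om(b),\ldots)$ and $(\om(b),\om(a),\ldots)$, which disagree at position~$1$ since $\om(a)\neq\om(b)$; for $|\mathcal{L}|=3$ with core $\{a,b,c\}$ and $a <_P b$, the three extensions begin with $(\om(a),\om(b),\om(c),\ldots)$, $(\om(a),\om(c),\om(b),\ldots)$, and $(\om(c),\om(a),\om(b),\ldots)$, and the last two disagree at position~$1$ because $\om(a) \neq \om(c)$.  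The main obstacle is not computational: it is to organize the case reduction cleanly and to track correctly which descent-set positions are forced to agree on each side; once the $B+A+T$ decomposition is in hand, the rest is essentially bookkeeping.
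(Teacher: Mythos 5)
Your proposal is correct and follows essentially the same route as the paper's proof: reduce via Lemma~\ref{lem:classify3linexts} to the case where one poset has its chain at the bottom (so all its linear extensions agree on the descent status of position~$1$, using $|B|\geq 2$) and the other has the two-element antichain of its core at the bottom (so its linear extensions disagree there), then invoke Theorem~\ref{thm:kexpansion} to contradict $\kpo=\kqt$. Your write-up just makes the case reduction and the position-$1$ disagreement more explicit than the paper does.
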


\begin{proof}
Suppose the result is false.  Then by Lemma~\ref{lem:classify3linexts}, up to switching $P$ and $Q$, they must have the unlabeled poset structures shown in Figure~\ref{fig:nouniqueminmax}, with the bold edge only appearing in the $|\mathcal{L}(P)|=3$ case. 
\begin{figure}[htbp]
\begin{center}
\begin{tikzpicture}[scale=0.6]
\begin{scope}
\tikzstyle{every node}=[draw,shape=circle, inner sep=2pt]; 
\draw (1,0) node (a1) {}; 
\draw (0,1) node (a2) {}; 
\draw (2,1) node (a3) {};
\draw[very thick] (-1,2) node (a4) {};
\draw (1,-1) node (l1) {};
\draw (1,-2) node (l2) {};
\draw (a1) -- (a3)
(a1) -- (a2)
(a1) -- (l1);
\draw[very thick] (a2) -- (a4);
\draw[thick, dotted] (l1) -- (l2);
\end{scope}
\begin{scope}[xshift=5cm,yshift=-1cm]
\begin{scope}
\tikzstyle{every node}=[draw,shape=circle, inner sep=2pt]; 
\draw (0,0) node (a1) {}; 
\draw (2,0) node (a2) {}; 
\draw (1,1) node (a3) {};
\draw[very thick] (3,-1) node (a4) {};
\draw (1,2) node (u1) {};
\draw (1,3) node (u2) {};
\draw (a1) -- (a3)
(a3) -- (a2)
(a3) -- (u1);
\draw[very thick] (a2) -- (a4);
\draw[thick, dotted] (u1) -- (u2);
\end{scope}
\end{scope}
\begin{scope}[xshift=-2cm]
\draw (0,0) node {$P=$};
\end{scope}
\begin{scope}[xshift=3.5cm]
\draw (0,0) node {$\sim$};
\end{scope}
\begin{scope}[xshift=9cm]
\draw (0,0) node {$=Q$};
\end{scope}
\end{tikzpicture}
\caption{The unlabeled poset structure if Lemma~\ref{lem:uniqueminmax} is false.  The bold edge is included if $|\mathcal{L}(P)|=3$ and is omitted if $|\mathcal{L}(P)|=2$.}
\label{fig:nouniqueminmax}
\end{center}
\end{figure}
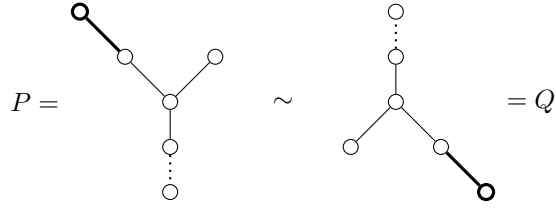
Let us compare $\mathcal{L}(P,\om)$ and $\mathcal{L}(Q,\tau)$.  Since $P$ has a unique minimum element and just one edge going up from that minimum element, every element of $\mathcal{L}(P,\om)$ will either have a descent in the first position, or no element of $\mathcal{L}(P,\om)$ will have a descent in the first position.  On the other hand, since $Q$ has two minimal elements, some elements of $\mathcal{L}(Q,\tau)$ will have a descent in the first position, and some will not.  Therefore, by Theorem~\ref{thm:kexpansion}, $\kpo \neq \kqt$, a contradiction.  
\end{proof}

\begin{proof}[Proof of Theorem~\ref{thm:fewlinexts}]
We proceed by induction on $n=|P|=|Q|$.  If $n\leq 5$, then the result holds by computationally checking all labeled posets; the only nontrivial equivalences with $\mathcal{L}(P,\om) \in \{2,3\}$ are the chain extensions of the \stwoone\ equivalence, and the chain extensions of the \stwooneone\ equivalence or of its bar-operation analogue.

Considering $n \geq 6$, we can apply Lemma~\ref{lem:uniqueminmax} to assume, applying the star operation if necessary, that $P$ and $Q$ both have a unique minimal element. 
We let $j_0$ denote the number of jump 0 elements in $(P,\om)$ which, by Proposition~\ref{pro:jump}, equals the number of jump 0 elements in $(Q,\tau)$.  The first case to consider is when $j_0=1$.  Letting $\widehat{(P,\om)}$ and  $\widehat{(Q,\tau)}$ respectively denote the labeled posets $(P,\om)$ and $(Q,\tau)$ with this jump 0 element removed, Proposition~\ref{pro:singlejump0} states that $ (P,\om) \sim (Q,\tau)$ if and only if $\widehat{(P,\om)} \sim \widehat{(Q,\tau)}$.
Since $(P,\om) \sim (Q,\tau)$ is a chain extension of $\widehat{(P,\om)} \sim \widehat{(Q,\tau)}$, the result follows by induction.  

It remains to consider $j_0 > 1$.  If all edges going up from the minimal element of both $(P,\om)$ and $(Q,\tau)$ are weak, we can apply the method of the previous paragraph to $\switch{P,\om}$ and $\switch{Q,\tau}$.  So let us assume, without loss of generality, that there is both a weak and a strict edge going up from the minimum element of $(Q,\tau)$.  Note that, by Lemma~\ref{lem:classify3linexts}, we cannot have more than two edges going up from the minimum element of $(Q,\tau)$ or of $(P,\om)$. 
If $(P,\om)$ has just one edge going up from its minimum element, then we see that $(P,\om) \sim (Q,\tau)$ does not hold by comparing the descents in the elements of $\mathcal{L}(P,\om)$ and $\mathcal{L}(Q,\tau)$, as in the proof of Lemma~\ref{lem:uniqueminmax}.  

By Lemma~\ref{lem:classify3linexts} and since $n\geq 6$, we conclude that $P$ and $Q$ must both have the unlabeled posets structure shown in Figure~\ref{fig:finaldeduction}(a) if $|\mathcal{L}(P)|=2$, and in Figure~\ref{fig:finaldeduction}(b) if $|\mathcal{L}(P)|=3$.  
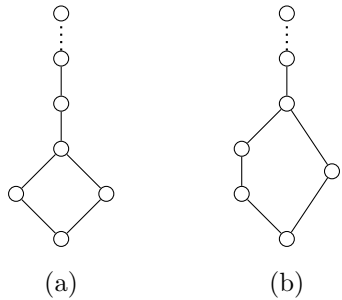
\begin{figure}[htbp]
\begin{center}
\begin{tikzpicture}[scale=0.6]
\begin{scope}
\tikzstyle{every node}=[draw,shape=circle, inner sep=2pt]; 
\draw (1,0) node (a1) {}; 
\draw (0,1) node (a2) {}; 
\draw (2,1) node (a4) {};
\draw (1,2) node (a5) {};
\draw (1,3) node (a6) {};
\draw (1,4) node (a7) {};
\draw (1,5) node (a8) {};
\draw (a1) -- (a2) -- (a5) -- (a6) -- (a7)
(a1) -- (a4) -- (a5);
\draw[thick, dotted] (a7) -- (a8);
\end{scope}
\begin{scope}[xshift=5cm]
\tikzstyle{every node}=[draw,shape=circle, inner sep=2pt]; 
\draw (1,0) node (a1) {}; 
\draw (0,1) node (a2) {}; 
\draw (0,2) node (a3) {};
\draw (2,1.5) node (a4) {};
\draw (1,3) node (a5) {};
\draw (1,4) node (a6) {};
\draw (1,5) node (a7) {};
\draw (a1) -- (a2) -- (a3) -- (a5) -- (a6)
(a1) -- (a4) -- (a5);
\draw[thick, dotted] (a6) -- (a7);
\end{scope}
\begin{scope}[xshift=1cm, yshift=-1cm]
\draw (0,0) node {(a)};
\end{scope}
\begin{scope}[xshift=6cm, yshift=-1cm]
\draw (0,0) node {(b)};
\end{scope}
\end{tikzpicture}
\caption{The two possibilities for the unlabeled poset structure of $P$ and $Q$ in the last paragraph of the proof of Theorem~\ref{thm:fewlinexts}.}
\label{fig:finaldeduction}
\end{center}
\end{figure}
Applying the star operation, $(P,\om)$ and $(Q,\tau)$ will both have just one edge going up from their unique minimum elements.  Since they both have the same number $j_0$ of jump 0 elements, we can apply the bar operation if necessary to ensure they both have $j_0=1$, and we are in a previously covered case.  
\end{proof}

It is natural to wonder about extending the results of this section to labeled posets with higher numbers of linear extensions.  The case $|\mathcal{L}(P)|=4$ already seems significantly more difficult since nothing as simple as Lemma~\ref{lem:classify3linexts} holds: there are at least two seemingly very different equivalences, namely the one shown in Figure~\ref{fig:fiveelements} and the equivalence that arises from $s_{2111} = s_{2222/111}$.  

%%%%%%%%%%%%%%%%%%%%%%%%%

\section{Open problems and concluding remarks}\label{sec:open}

Admittedly, a complete understanding of $\kpo$-equality is currently out of reach, which is not surprising given that the special case of skew Schur equality is wide open.  Our hope is that the study of $\kpo$-equality will be attractive to others because of the wealth of open problems in the area.  We have already presented Question~\ref{que:longestchain} and Conjecture~\ref{con:antichain}, and here we add some additional unanswered questions, along with some remarks. 

\subsection{Irreducibility of $\kpo$}
In the case of skew Schur function equality, it is sufficient to restrict one's attention to connected skew diagrams, as shown in \cite[\S 6]{RSvW07} and \cite{BRvW09}.  We have not been able to show that the same is true for the labeled poset case.

\begin{question}\label{que:disconnected}
Do there exist labeled posets $(P,\om)$, $(Q,\tau)$ and $(R, \upsilon)$ such that $P$ is connected and 
\[
\kpo = \kqt\, \gen{R,\upsilon}\ ?
\]
\end{question}

A positive answer to the next question would answer Question~\ref{que:disconnected} in the negative.

\begin{question}
Is $\kpo(x)$ irreducible in the ring of quasisymmetric functions over $\mathbb{Z}$ in infinitely many variables $x$? 
\end{question}

If the answer is ``yes,'' one might consider this related question.

\begin{question} Is $\kpo(x)$ irreducible in $\mathbb{Z}[[x]]$, the ring over $\mathbb{Z}$ of formal power series with bounded degree in infinitely many variables $x$?  
\end{question}

In \cite{LaPy08}, Lam and Pylyavskyy show some related results, including the irreducibility of $M_\alpha$ and $F_\alpha$ in both rings.  

\subsection{Sizes of equivalence classes}

In the case of skew Schur equality, it is conjectured in \cite{RSvW07} that all equivalence classes have a size that is a power of two.  This is false for general labeled posets, as there is an equivalence class of size 3 involving the labeled posets of Figure~\ref{fig:fiveelements}.   Applying the star operation to the labeled poset on the right of Figure~\ref{fig:fiveelements} yields a new labeled poset with the same $\kpo$ as the two shown, and there are no other labeled posets in this equivalence class.  

\subsection{Unexplained $\kpo$-equalities}
It is clear that we could use the results of Subsection~\ref{sub:combining} to construct equivalences involving arbitrarily large labeled posets.  On the other hand, the smallest labeled-poset equivalences that cannot be explained by our results occur when $|P|=5$.  Ignoring skew Schur equality, there are 16 $\kpo$-equalities when $|P|=5$, up to application of the bar and star operations.  Of these, all but four can be explained by our results.  These four are shown in Figure~\ref{fig:unexplained}, the first of which is recognizable from Figure~\ref{fig:nontrivial}.  We present them as a possible starting point for further investigation of $\kpo$-equality.

\begin{figure}[htbp]
\begin{center}
\begin{tikzpicture}[scale=1.0]
\begin{scope}
\begin{scope}
\tikzstyle{every node}=[shape=circle, inner sep=2pt]; 
\begin{scope}
\draw (0,0) node[draw] (a1) {};
\draw (1,0) node[draw] (a2) {};
\draw (0,1) node[draw] (a3) {};
\draw (1,1) node[draw] (a4) {};
\draw (1,2) node[draw] (a5) {};
\draw[double distance=2pt] (a2) -- (a4);
\draw (a1) -- (a3)
(a1) -- (a4)
(a2) -- (a3)
(a4) -- (a5); 
\end{scope}
\begin{scope}[xshift=3cm]
\draw (0.5,0) node[draw] (a1) {}; 
\draw (0,1) node[draw] (a2) {}; 
\draw (1,1) node[draw] (a3) {};
\draw (0,2) node[draw] (a4) {};
\draw (1,2) node[draw] (a5) {};
\draw[double distance=2pt] (a1) -- (a2);
\draw (a1) -- (a3)
(a2) -- (a4)
(a3) -- (a5);
\end{scope}
\end{scope}
\begin{scope}[xshift=2cm,yshift=1cm]
\draw (0,0) node {$\sim$}; 
\end{scope}
\end{scope}

\begin{scope}[yshift=-2cm]
\begin{scope}
\tikzstyle{every node}=[shape=circle, inner sep=2pt]; 
\begin{scope}[yshift=1cm, rotate=180]
\draw (-1,0) node[draw] (a1) {};
\draw (0,0) node[draw] (a2) {};
\draw (1,0) node[draw] (a3) {};
\draw (-0.5,1) node[draw] (a4) {};
\draw (0.5,1) node[draw] (a5) {};
\draw[double distance=2pt] (a1) -- (a4)
(a3) -- (a4);
\draw (a5)++(-1.2pt,-2.5pt) -- +(-1.36,-0.9)
(a5)++(-2.6pt,-0.5pt) -- +(-1.36,-0.9);
\draw (a2) -- (a4)
(a3) -- (a5)
(a2) -- (a5); 
\end{scope}
\begin{scope}[xshift=4cm]
\draw (-1,0) node[draw] (a1) {};
\draw (0,0) node[draw] (a2) {};
\draw (1,0) node[draw] (a3) {};
\draw (-0.5,1) node[draw] (a4) {};
\draw (0.5,1) node[draw] (a5) {};
\draw[double distance=2pt] (a1) -- (a4)
(a3) -- (a4);
\draw (a5)++(-1.2pt,-2.5pt) -- +(-1.36,-0.9)
(a5)++(-2.6pt,-0.5pt) -- +(-1.36,-0.9);
\draw (a2) -- (a4)
(a3) -- (a5)
(a2) -- (a5); 
\end{scope}
\end{scope}
\begin{scope}[xshift=2cm,yshift=0.5cm]
\draw (0,0) node {$\sim$}; 
\end{scope}

\end{scope}
\begin{scope}[yshift=-5cm]
\begin{scope}
\tikzstyle{every node}=[shape=circle, inner sep=2pt]; 
\begin{scope}
\draw (0,0) node[draw] (a1) {};
\draw (1,0) node[draw] (a2) {};
\draw (-0.5,1) node[draw] (a3) {};
\draw (0.5,1) node[draw] (a4) {};
\draw (1,2) node[draw] (a5) {};
\draw[double distance=2pt]  (a1) -- (a3)
(a2) -- (a5);
\draw (a4)++(0.2pt,2.8pt) -- +(0.42,0.84);
\draw (a4)++(2.3pt,1.5pt) -- +(0.42,0.84);
\draw (a1) -- (a4)
(a2) -- (a3); 
\end{scope}
\begin{scope}[xshift=3.5cm]
\draw (0,0) node[draw] (a1) {};
\draw (1,0) node[draw] (a2) {};
\draw (-0.5,1) node[draw] (a3) {};
\draw (0.5,1) node[draw] (a4) {};
\draw (1,2) node[draw] (a5) {};
\draw[double distance=2pt] (a2) -- (a3)
(a4)-- (a5);
\draw (a1) -- (a3)
(a1) -- (a4)
(a2) -- (a5);
\end{scope}
\end{scope}
\begin{scope}[xshift=2cm,yshift=1cm]
\draw (0,0) node {$\sim$}; 
\end{scope}
\end{scope}

\begin{scope}[yshift=-8cm]
\begin{scope}
\tikzstyle{every node}=[shape=circle, inner sep=2pt]; 
\begin{scope}
\draw (-0.5,0) node[draw] (a1) {};
\draw (0.5,0) node[draw] (a2) {};
\draw (0,1) node[draw] (a3) {};
\draw (1,1) node[draw] (a4) {};
\draw (0.5,2) node[draw] (a5) {};
\draw[double distance=2pt] (a2) -- (a3)
(a2) -- (a4)
(a4) -- (a5);
\draw (a1) -- (a3)
(a3) -- (a5); 
\end{scope}
\begin{scope}[xshift=3.5cm]
\draw (0.5,0) node[draw] (a1) {}; 
\draw (0,1) node[draw] (a2) {}; 
\draw (1,1) node[draw] (a3) {};
\draw (-0.5,2) node[draw] (a4) {};
\draw (0.5,2) node[draw] (a5) {};
\draw[double distance=2pt] (a1) -- (a2)
(a2) -- (a4);
\draw (a1) -- (a3)
(a2) -- (a5)
(a3) -- (a5);
\end{scope}
\end{scope}
\begin{scope}[xshift=2cm,yshift=1cm]
\draw (0,0) node {$\sim$}; 
\end{scope}
\end{scope}
\end{tikzpicture}
\caption{The four unexplained equivalences when $|P|=5$.}
\label{fig:unexplained}
\end{center}
\end{figure}
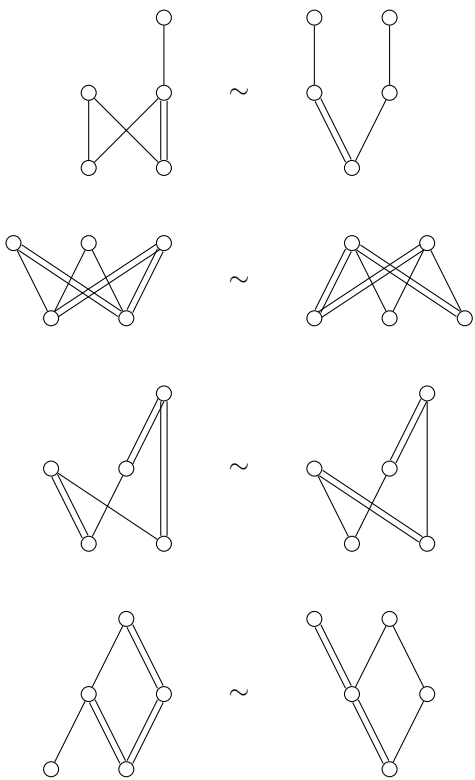

These unexplained $\kpo$-equalities suggest a further question.

\begin{question} What can be said about the labeled posets $(P,\om)$ that satisfy the following equivalences:
\begin{enumerate}
\item $(P,\om) \sim \rotate{P,\om}$, or \medskip
\item $(P,\om) \sim \switchx{\rotate{P,\om}}$ ?
\end{enumerate}
\end{question}

Examples of labeled posets $(P,\om)$ satisfying the equivalence of (a) include skew-diagram labeled posets, the labeled poset on the right in Figure~\ref{fig:fiveelements}, and the labeled posets of the second example in Figure~\ref{fig:unexplained}.  As for the equivalence of (b), examples include all labeled posets of Figure~\ref{fig:unexplained} except for those of the first equivalence, and all labeled posets obtained from self-transpose skew diagrams, such as the labeled posets of the \stwoone\ equivalence.  

%%%%%%%%%%%%%%%%%%%%%%%%%

\bibliography{ppartitionequality}
\bibliographystyle{alpha}

\end{document}